\documentclass[a4paper,11pt]{article}

\usepackage{amsmath}
\usepackage{amsfonts}
\usepackage{amssymb}
\usepackage{amstext    }
\usepackage{amsthm    }

\usepackage[a4paper]{geometry}
\geometry{left=3cm,right=3cm,top=2.5cm}

\usepackage{mathrsfs}

\usepackage{hyperref}

\usepackage{color}

\usepackage[T1]{fontenc}
\usepackage{graphicx}
\usepackage[all,cmtip]{xy}

\usepackage{epsf}
\usepackage{psfrag}

\usepackage{fancyhdr}

\usepackage{url}

\newtheorem{theorem}{Theorem}[section]
\newtheorem{lemma}[theorem]{Lemma}

\newtheorem{proposition}[theorem]{Proposition}
\newtheorem{remark}[theorem]{Remark}
\newtheorem{corollary}[theorem]{Corollary}

\newcommand{\cali}[1]{\mathscr{#1}}

\numberwithin{equation}{section}

\newcommand{\ddc}{{dd^c}}

\newcommand{\Jac}{{\rm Jac}}
\newcommand{\supp}{{\rm supp}}

\newcommand{\bif}{{\rm Bif}}
\newcommand{\proj}{{\rm proj}}
\newcommand{\per}{{\rm Per}}

\newcommand{\crit}{{\rm Crit}}

\newcommand{\Jc}{\cali{J}}

\newcommand{\Pb}{\mathbb{P}}
\newcommand{\Cb}{\mathbb{C}}

\newcommand{\Zb}{\mathbb{Z}}

\newcommand{\Rb}{\mathbb{R}}

\usepackage[T1]{fontenc}
\usepackage[english]{babel}

\title{Dynamics of fibered endomorphisms of $\mathbb P ^k$}
\author{Christophe Dupont and Johan Taflin\footnote{Research  partially supported by ANR project Fatou ANR-17-CE40-0002-01.}}
\begin{document}

\maketitle

\begin{abstract}
We study the structure and the Lyapunov exponents of the equilibrium measure of endomorphisms of $\Pb^k$ preserving a fibration. We extend the decomposition of the equilibrium measure obtained by Jonsson for polynomial skew products of $\Cb^2$. We also show that the sum of the sectional exponents satisfies a Bedford-Jonsson formula when the fibration is linear, and that this function is plurisubharmonic on families of fibered endomorphisms. In particular, the sectional part of the bifurcation current is a closed positive current on the parameter space.
\end{abstract}

Key words : Equilibrium measure, Lyapunov exponents, Bifurcation current.\\

MSC 2010 : 32H50, 37F10.

\section{Introduction}
Let $f$ be a holomorphic endomorphism of $\Pb^k$ of degree $d\geq2$ which preserves a rational fibration parametrized by a projective space i.e. there exist a dominant rational map $\pi\colon \Pb^k\dashrightarrow\Pb^r$ and a holomorphic map $\theta\colon\Pb^r\to\Pb^r$ such that
\begin{equation}\label{eq-com}
\pi\circ f=\theta\circ \pi.
\end{equation}
The generic fiber of $\pi$ has dimension $q:=k-r.$ Another way to express \eqref{eq-com} is that $f$ permutes the fibers of $\pi$ and this permutation is given by $\theta.$ We are interested in the relationships between the dynamics of $f$ and the one of $\theta.$

This type of maps has been recently used to exhibit interesting dynamical phenomena in $\Pb^2$ (see \cite{dujardin-non-laminar}, \cite{wandering}, \cite{bianchi-t-desboves}, \cite{dujardin-bif}, \cite{t-blender}). All these examples, except \cite{bianchi-t-desboves}, come from polynomial skew products of $\Cb^2$, whose dynamical properties have been studied by Jonsson in \cite{jonsson-skew}. It is therefore interesting for future examples to extend the results of \cite{jonsson-skew} to a broader framework. Our initial motivation was to study the particular case where $\pi$ is the standard linear fibration defined by $\pi[y:z]=[y]$ with $y:=(y_0,\ldots,y_r)\in\Cb^{r+1}$ and $z=(z_0,\ldots,z_{q-1})\in\Cb^q.$ 
 However, some of the techniques can be extended to a more general setting. In what follows, we choose the setting of each result in order to avoid unnecessary technical details. We refer to the end of this introduction for the possible scope of the techniques, in particular when $k=2$ thanks to the works of Dabija-Jonsson (\cite{dabija-jonsson-pencil}, \cite{dabija-jonsson-web}) and Favre-Peirera (\cite{favre-pereira-foliation}, \cite{favre-pereira-web}) on endomorphisms of $\Pb^2$ preserving a fibration, a foliation or a web.

\paragraph{Green currents and Lyapunov exponents --}The maps $f,$ $\pi$ and $\theta$ are given by homogeneous polynomials and a simple computation shows that $f$ and $\theta$ have the same degree. Both maps have a Green current, $T_f$ and $T_\theta$ respectively, which are positive closed $(1,1)$-currents with continuous local potentials. Their self-intersections are well-defined and their supports define dynamically meaningful filtrations
$$\Jc_i(f):=\supp(T_f^i)\ \ \ \text{ and }\ \ \ \Jc_j(\theta):=\supp(T_\theta^j),$$
for $i\in\{1,\ldots,k\}$ and $j\in\{1,\ldots,r\}.$ The \textit{equilibrium measures} of $f$ and $\theta$ are defined by $\mu_f:=T^k_f$ and $\mu_\theta:=T^r_\theta.$ Since $f$ and $\theta$ are semi-conjugated by $\pi,$ a natural question is whether there exists a relation between $T_f^i$ and the pull back of $T_\theta$ by $\pi.$
Our first result gives such a relationship if $i>q.$ More precisely, we will see in Section \ref{sec-basics} how to define $\pi^*T_\theta$ and if $S$ denotes the result normalized by its mass,
$$S:=\frac{\pi^*T_\theta}{\|\pi^*T_\theta\|},$$
then we have the following result.
\begin{theorem}\label{th-mu}
Let $f\colon\Pb^k\to\Pb^k$  and $\theta\colon\Pb^r\to\Pb^r$ be two endomorphisms of degree $d\geq2.$ Assume there exists a dominant rational map $\pi\colon\Pb^k\dashrightarrow\Pb^r$ whose indeterminacy set $I(\pi)$ is disjoint from $\Jc_q(f)$ and such that $\theta\circ\pi=\pi\circ f.$ Then for $j\in\{1,\ldots, r\},$ the current $S^j$ is well-defined, satisfies $S^j\neq T_f^j$ and $T_f^{q+j}=T_f^q\wedge S^j.$ In particular, $\mu_f=T_f^q\wedge S^r$ and $\pi_*\mu_f=\mu_\theta.$
\end{theorem}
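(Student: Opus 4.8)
The plan is to build everything around the key identity $T_f^{q+j} = T_f^q \wedge S^j$, since the two concluding statements ($\mu_f = T_f^q \wedge S^r$ and $\pi_*\mu_f = \mu_\theta$) follow from it by specialization and pushforward. First I would make sense of $\pi^* T_\theta$: away from $I(\pi)$, the map $\pi$ is holomorphic, and since $T_\theta$ has continuous local potentials, $\pi^* T_\theta$ is a well-defined positive closed $(1,1)$-current on $\Pb^k \setminus I(\pi)$ with continuous potentials; because $I(\pi)$ has codimension at least $2$ and the local potentials are bounded near it, this extends (by Skoda–El Mir type extension, or simply by taking $dd^c$ of a bounded psh extension of the potential) to a positive closed current on all of $\Pb^k$, whose mass I normalize to get $S$. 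The powers $S^j$ are then well-defined as products of $(1,1)$-currents with (locally) bounded potentials in the Bedford–Taylor sense, at least on $\Pb^k \setminus I(\pi)$, and one checks the mass does not concentrate on $I(\pi)$ so that $S^j$ is globally well-defined for $j \le r$.

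Next I would establish $S^{q+1} = 0$, equivalently $S^r \wedge S = 0$ when $r < k$, or more to the point that $S$ is supported on (the closure of) $\pi^{-1}(\Jc_1(\theta))$ and, being a pullback from an $r$-dimensional base, satisfies $S^{j} = 0$ for $j > r$; this is where the fibered structure enters, since fiberwise $S$ is "constant" and carries no transverse mass beyond the $r$ directions of the base. The heart of the argument is the identity $T_f^{q+j} = T_f^q \wedge S^j$. I would prove it by induction on $j$, the crucial case being $j=1$: $T_f^{q+1} = T_f^q \wedge S$. The strategy is to use the dynamical characterization $T_f = \lim d^{-n}(f^n)^* \omega$ together with the commutation $\pi \circ f = \theta \circ \pi$, which gives $(f^n)^* \pi^* T_\theta = \pi^* (\theta^n)^* T_\theta = d^n\,\pi^* T_\theta$, so $\pi^* T_\theta$ (hence $S$) is an invariant current: $f^* S = d\, S$. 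Then I would show that on $\Jc_q(f)$ — which by hypothesis avoids $I(\pi)$, so $\pi$ and all its iterates behave well there — the current $T_f^q$ has a local structure (a geometric/laminar-type or just a potential-theoretic estimate) that lets one replace one more factor of $T_f$ by $S$ without changing the product, using that both $T_f$ and $S$ are obtained as limits of normalized pullbacks of smooth forms and that the difference $T_f - S$ is $dd^c$ of a function that is pluriharmonic, or at least sufficiently regular, along the relevant directions after wedging with $T_f^q$. Concretely, writing $T_f = S + dd^c u$ with $u$ continuous (after matching masses appropriately), one wants $T_f^q \wedge dd^c u \wedge S^{j-1} = 0$; this should follow from an integration-by-parts / Stokes argument once one knows $u$ is, say, constant on fibers of $\pi$ restricted to $\Jc_q(f)$, or that $T_f^q \wedge S^{j-1}$ is supported where the fiberwise variation of $u$ vanishes.

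Granting the main identity, the two corollaries are immediate: taking $j = r$ gives $\mu_f = T_f^k = T_f^{q+r} = T_f^q \wedge S^r$. For $\pi_*\mu_f = \mu_\theta$, I would use that $S^r = \pi^* T_\theta^r / \|\cdot\| = \pi^*\mu_\theta / c$ for the appropriate constant $c$, so $\mu_f = c^{-1} T_f^q \wedge \pi^* \mu_\theta$; pushing forward by $\pi$ and using the projection formula $\pi_*(T_f^q \wedge \pi^*\mu_\theta) = (\pi_* T_f^q) \wedge \mu_\theta$ together with the fact that $\pi_* T_f^q$ is, after the mass normalizations are tracked, exactly the constant $c$ times the fundamental class seen by $\mu_\theta$ (equivalently, $\pi$ restricted to a generic fiber has the right degree so $\pi_* T_f^q = c\,\omega_{\Pb^r}^0$ in the relevant sense), yields $\pi_*\mu_f = \mu_\theta$. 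Finally $S^j \neq T_f^j$ must be checked: if they were equal then $T_f$ itself would be a pullback $\pi^* T_\theta$, forcing $T_f^{k} = T_f^q \wedge S^r$ with $T_f^q = S^q$, but $S^{q+1} = 0$ would give $\mu_f = 0$, a contradiction; so inequality holds.

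The main obstacle I anticipate is the identity $T_f^{q+j} = T_f^q \wedge S^j$ — specifically the step showing $T_f^q \wedge dd^c u \wedge S^{j-1} = 0$. Wedge products of currents are not continuous in general, and justifying the integration by parts requires real control on the regularity of $u$ and on the geometry of $T_f^q$ near $I(\pi)$ (even though $\Jc_q(f)$ avoids $I(\pi)$, the current $T_f^q$ may have mass elsewhere; but note only $T_f^q \wedge S^{j-1}$, supported in $\Jc_q(f) \cap \supp S$, enters). The cleanest route is probably to exploit the invariance $f^* S = d S$, $f^* T_f = d T_f$ to run a convergence argument: approximate $u$ by $u_n := d^{-n}(\text{potential difference at level } n)$ and show the error terms $T_f^q \wedge dd^c u_n \wedge S^{j-1}$ decay, using the hypothesis $I(\pi) \cap \Jc_q(f) = \emptyset$ precisely to keep the iterated pullbacks $(\theta^n)$-controlled along $\Jc_q(f)$. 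I would expect this to be the technical core of Section "basics" referenced in the text.
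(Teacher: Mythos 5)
Your setup (defining $S$ by trivial extension across $I(\pi)$, reducing everything to the identity $T_f^{q+j}=T_f^q\wedge S^j$, and the pushforward computation for $\pi_*\mu_f=\mu_\theta$) matches the paper. But the central identity is not actually proved, and the mechanism you propose for it would fail. You want to replace factors of $T_f$ by $S$ one at a time by showing $T_f^q\wedge \ddc u\wedge S^{j-1}=0$, where $T_f-S=\ddc u$, and you justify this by suggesting that $u$ is constant on the fibers of $\pi$ over $\Jc_q(f)$, or that $T_f^q\wedge S^{j-1}$ is supported where the fiberwise variation of $u$ vanishes. Neither is true: $u$ is (up to normalization) the relative Green function $G$ of Section \ref{sec-bj}, which restricted to a periodic fiber is the Green function of a regular polynomial endomorphism of $\Cb^q$; it varies nontrivially along the fibers, and $T_f^q\wedge[L_a]$ is precisely the equilibrium measure of that fiber map, which is not concentrated where the fiberwise variation of $G$ vanishes. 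The vanishing $T_f^q\wedge S^{j-1}\wedge\ddc G=0$ is true only a posteriori, as a consequence of the theorem, not as an input to it.

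The two ingredients you are missing are exactly what the paper uses. First, a domination inequality: since $\Jc_q(f)$ avoids a neighborhood $U$ of $I(\pi)$ and $R:=\deg(\pi)^{-1}\pi^*\omega_{\Pb^r}$ is smooth off $I(\pi)$, one has $R\leq C\omega_{\Pb^k}$ on $\Pb^k\setminus U$, hence $T_f^q\wedge R^j\leq C^jT_f^q\wedge\omega_{\Pb^k}^j$ on all of $\Pb^k$; applying $d^{-n(q+j)}f^{n*}$, using $\pi\circ f^n=\theta^n\circ\pi$ and the continuity statements of Proposition \ref{prop-pull-back}, gives $T_f^q\wedge S^j\leq C^jT_f^{q+j}$ in the limit. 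Second, extremality: both sides are invariant under $d^{-(q+j)}f^*$ and $T_f^{q+j}$ is extremal in the cone of such invariant positive closed currents (Sibony in bidegree $(1,1)$, Dinh--Sibony in general), so domination plus equality of masses forces equality of the currents. The invariance $f^*S=dS$ alone, which is all you invoke, does not suffice. Separately, your argument for $S^j\neq T_f^j$ is flawed: $S^{r+1}$ is not zero (it has mass $\deg(\pi)^{r+1}$ and is supported in $I(\pi)$), and $S^j=T_f^j$ does not imply $S=T_f$. The correct argument compares supports: if $S^j=T_f^j$ then $S^{r+1}=T_f^j\wedge S^{r+1-j}$ would be carried by $I(\pi)$, of dimension at most $q-1$, whereas the H\"older regularity of the potentials of $T_f$ forces $T_f^j\wedge S^{r+1-j}$ to put no mass on analytic sets of that dimension.
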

Let us emphasize that the proof only relies on the properties of the currents $T_f$ and $T_\theta$ and is coordinate free. Using the classification obtained in \cite{dabija-jonsson-pencil}, one can check easily that the assumption $\Jc_q(f)\cap I(\pi)=\varnothing$ is always satisfied when $k=2,$ in which case $q=1$. This is also the case for the standard linear fibration in any dimension (see Lemma \ref{le-supp-sans-para}). In general, we know no example where this assumption does not hold.

The main point in Theorem \ref{th-mu} is the formula $\mu_f=T_f^q\wedge S^r$ which can be seen as a generalization of the decomposition of $\mu_f$ obtained by Jonsson \cite{jonsson-skew} for polynomial skew products of $\Cb^2.$ Indeed, for $\mu_\theta$-almost every $a\in\Pb^r$ the fiber $L_a:=\overline{\pi^{-1}(a)}$ has dimension $q$ and we can define the probability measure
$$\mu_a:=\frac{T_f^q\wedge[L_a]}{\|\pi^*T_\theta\|^r}.$$

\begin{corollary}\label{cor-decomp}
Let $\phi\colon\Pb^k\to\Rb$ be a continuous function. Under the assumptions of Theorem \ref{th-mu} we have
$$\int_{\Pb^k}\phi(x)d\mu_f(x)=\int_{\Pb^r}\left(\int_{L_a}\phi(x)d\mu_a(x)\right)d\mu_\theta(a).$$
\end{corollary}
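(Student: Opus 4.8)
The plan is to disintegrate $\mu_f$ along the fibration $\pi$ and identify the conditional measures with the $\mu_a$. Starting from Theorem~\ref{th-mu}, we have $\mu_f = T_f^q \wedge S^r$ with $S = \pi^* T_\theta / \|\pi^* T_\theta\|$, and $\pi_* \mu_f = \mu_\theta$. The strategy is: first approximate the current $S^r$ by currents built from fibers over points distributed according to $\mu_\theta$, then wedge with $T_f^q$ and pair with $\phi$, and finally pass to the limit using continuity of $\phi$ and the weak convergence.

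First I would recall the basic construction of $\pi^* T_\theta$ from Section~\ref{sec-basics}: since $T_\theta = \lim_n d^{-rn} (\theta^n)^* \omega_{FS}^r$ (or a similar dynamical expression), one gets $\pi^* T_\theta$ as a positive closed $(r,r)$-current on $\Pb^k$, well-defined because $I(\pi)$ avoids $\Jc_q(f)$ so that the relevant self-intersections put no mass there. The key approximation is that $T_\theta^r = \mu_\theta$ can be written as a weak limit of averages of point masses, and more precisely, pulling back, $\pi^* \mu_\theta = \|\pi^* T_\theta\|^r \cdot (\text{something})$; concretely I would use that for a suitable sequence of finite sets $E_n \subset \Pb^r$ equidistributed toward $\mu_\theta$ (e.g.\ preimages $\theta^{-n}(a)$ of a generic point, suitably weighted), the currents $|E_n|^{-1} \sum_{a \in E_n} [L_a]$ converge to $\pi^* T_\theta / \|\pi^* T_\theta\|^{\,?}$ in an appropriate normalization, equivalently $S^r$ is approximated after wedging with $T_f^q$. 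Then
$$\int_{\Pb^k} \phi \, d\mu_f = \int_{\Pb^k} \phi \, T_f^q \wedge S^r = \lim_n \frac{1}{|E_n|}\sum_{a \in E_n} \int_{L_a} \phi \, \frac{T_f^q \wedge [L_a]}{\|\pi^* T_\theta\|^r} = \lim_n \frac{1}{|E_n|}\sum_{a \in E_n} \int_{L_a} \phi \, d\mu_a,$$
and the right-hand side is a Riemann-type sum for $\int_{\Pb^r} \left( \int_{L_a} \phi \, d\mu_a \right) d\mu_\theta(a)$.

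To make the last passage to the limit rigorous, the crucial point is the continuity (or at least upper/lower semicontinuity and $\mu_\theta$-a.e.\ continuity) of the slice function $a \mapsto \int_{L_a} \phi \, d\mu_a$. I would establish this by a slicing argument: since $T_f^q$ has continuous local potentials, the slice measures $T_f^q \wedge [L_a]$ depend continuously on $a$ in the weak sense for $a$ outside the (pluripolar, hence $\mu_\theta$-negligible) set where either $L_a$ is not of pure dimension $q$ or the slice is not defined; combined with continuity of $\phi$, the function $a \mapsto \int_{L_a} \phi \, d\mu_a$ is then $\mu_\theta$-measurable and continuous off a $\mu_\theta$-null set, which suffices for the equidistribution limit. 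An alternative, cleaner route avoiding explicit equidistribution sets: observe that the formula in the statement simply asserts that $\{\mu_a\}_{a}$ is \emph{the} disintegration of $\mu_f$ over $(\Pb^r, \mu_\theta)$ with respect to $\pi$; by uniqueness of disintegration it is enough to check that $a \mapsto \mu_a$ is measurable, that $\mu_a$ is supported on $L_a = \overline{\pi^{-1}(a)}$, and that for every continuous $\psi$ on $\Pb^r$ one has $\int \psi(\pi(x)) \, d\mu_f(x) = \int \psi(a) \left( \int_{L_a} 1 \, d\mu_a \right) d\mu_\theta(a)$, i.e.\ $\mu_a$ is a probability measure and $\pi_* \mu_f = \mu_\theta$ — the latter being exactly the last assertion of Theorem~\ref{th-mu}. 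Then the displayed identity follows by a monotone-class / density argument extending from $\phi = \psi \circ \pi$ to general continuous $\phi$ via the product structure $T_f^{q+r} = T_f^q \wedge S^r$.

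The main obstacle I anticipate is justifying that $a \mapsto T_f^q \wedge [L_a]$ is a genuine measurable (and a.e.\ weakly continuous) family of measures with total mass $\|\pi^* T_\theta\|^r$ for $\mu_\theta$-a.e.\ $a$ — i.e.\ that the naive slicing of the closed positive current $T_f^q$ by the fibers $L_a$ really computes $\pi^*T_\theta \wedge T_f^q$ fiberwise. This requires the hypothesis $I(\pi) \cap \Jc_q(f) = \varnothing$ (so that $T_f^q$ charges no mass near the indeterminacy locus and the fibers $L_a$ meet $\Jc_q(f)$ in their smooth part for generic $a$), together with a Fubini-type theorem for slicing positive closed currents by the level sets of a dominant rational map, in the spirit of the slicing theory of de~Rham/Federer adapted to currents with continuous potentials. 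Once this fiberwise slicing identity $\int_{\Pb^r} (T_f^q \wedge [L_a]) \, d\mu_\theta(a) = T_f^q \wedge \pi^* T_\theta = \|\pi^*T_\theta\|^r \, \mu_f$ is in hand, the corollary is immediate by integrating $\phi$ and normalizing.
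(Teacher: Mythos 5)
You have correctly reduced the corollary to the slicing identity $\int_{\Pb^r} T_f^q\wedge[L_a]\,d\mu_\theta(a)=\|\pi^*T_\theta\|^r\,\mu_f$, and you rightly flag it as the main obstacle --- but you then leave it unproved, appealing to a general ``Fubini-type theorem for slicing positive closed currents by fibers of a rational map'' that is not available off the shelf: $[L_a]$ is a singular current and $T_f^q$ is not smooth, so the wedge products and their dependence on $a$ are exactly what must be constructed. The paper's proof \emph{is} the proof of this identity, and the two ingredients you are missing are: (i) a cutoff $\chi_n$ supported away from $\crit_\pi$ (the whole set where $\pi$ fails to be a submersion, not just $I(\pi)$), which is harmless because $\mu_f$ and, for $\mu_\theta$-a.e.\ $a$, the measures $T_f^q\wedge[L_a]$ charge no algebraic set of the relevant dimension; and (ii) regularization of $T_f$ as $T_l=\omega_{\Pb^k}+\ddc g_l$ with $g_l\to g$ uniformly (possible since $T_f$ has continuous potentials), so that the Fubini identity holds trivially for the smooth forms $\chi_n\phi\,T_l^q$ via the adjunction $\langle \pi^*\mu_\theta,\chi_n\phi\,T_l^q\rangle=\langle\mu_\theta,\pi_*(\chi_n\phi\,T_l^q)\rangle$ on $\Pb^k\setminus\crit_\pi$, and then passes to the limit in $l$ because uniform convergence of potentials gives $T_l^q\wedge R\to T_f^q\wedge R$ against any positive closed $(r,r)$-current $R$.

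Your two proposed routes around this both have problems. The equidistribution route (finite sets $E_n$) requires weak continuity of $a\mapsto T_f^q\wedge[L_a]$ on the supports of the approximating measures; in the paper such continuity (Lemma \ref{le-conti}) is established only for the standard linear fibration, whereas the corollary is stated in the generality of Theorem \ref{th-mu}, and proving it is in any case not easier than the direct regularization. The disintegration-uniqueness route is circular as written: verifying $\pi_*\mu_f=\mu_\theta$ only tests functions of the form $\psi\circ\pi$, which do not separate points within a fiber, so no monotone-class or density argument can upgrade this to arbitrary continuous $\phi$; uniqueness of the disintegration identifies $\{\mu_a\}_a$ only after one already knows $\mu_f=\int\mu_a\,d\mu_\theta(a)$, which is the statement to be proven.
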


The other results in this paper can also be seen as consequences of the formula  $\mu_f=T_f^q\wedge S^r$ and the main technical difficulties come from the fact that the currents $S$ and $[L_a]$ are singular. As a direct consequence of Theorem \ref{th-mu}, we obtain in the following result that if $\mu_\theta$ is absolutely continuous with respect to Lebesgue measure (i.e. $\theta$ is a Latt\`es mapping of $\Pb^r$, see \cite{dupont-lattes}) then $\mu_f$ is absolutely continuous with respect to the trace measure $\sigma_{T_f^q}:=T_f^q\wedge\omega_{\Pb^k}^r.$ Here, $\omega_{\Pb^k}$ (resp. $\omega_{\Pb^r}$) is the Fubini-Study form on $\Pb^k$ (resp. $\Pb^r$)  normalized such that $\omega_{\Pb^k}^k$ (resp. $\omega_{\Pb^r}^r$) is a probability measure.
\begin{corollary}\label{cor-abso}
Under the assumptions of Theorem \ref{th-mu}, if $\mu_\theta<<\omega^r_{\Pb^r}$ then $\mu_f<<\sigma_{T_f^q}.$
\end{corollary}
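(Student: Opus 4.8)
The plan is to deduce Corollary \ref{cor-abso} from the formula $\mu_f=T_f^q\wedge S^r$ of Theorem \ref{th-mu} by disintegrating both the target measure and the trace measure over the base $\Pb^r$ and comparing them fiber by fiber. First I would invoke Corollary \ref{cor-decomp}, which gives the disintegration $\mu_f=\int_{\Pb^r}\mu_a\,d\mu_\theta(a)$ with $\mu_a=\|\pi^*T_\theta\|^{-r}\,T_f^q\wedge[L_a]$ supported on the generic fiber $L_a=\overline{\pi^{-1}(a)}$. On the other hand, I would write down a Fubini-type disintegration of the trace measure $\sigma_{T_f^q}=T_f^q\wedge\omega_{\Pb^k}^r$ over $\Pb^r$: the idea is that, up to a bounded density, slicing $T_f^q\wedge\omega_{\Pb^k}^r$ by the fibers of $\pi$ produces the measures $T_f^q\wedge[L_a]$ against a measure on the base equivalent to $\omega_{\Pb^r}^r$. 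Concretely, since $\pi^*\omega_{\Pb^r}$ is a positive closed $(1,1)$-current that is, away from $I(\pi)$, smooth and whose $r$-th power is nowhere degenerate on the directions transverse to the fibers, one has $\pi^*(\omega_{\Pb^r}^r)=\pi^*\omega_{\Pb^r}^r\leq C\,\omega_{\Pb^k}^r$ on a neighborhood of $\Jc_q(f)$ (which avoids $I(\pi)$ by hypothesis), and the coarea/slicing formula gives $T_f^q\wedge\pi^*\omega_{\Pb^r}^r=\int_{\Pb^r}(T_f^q\wedge[L_a])\,\omega_{\Pb^r}^r(a)$.

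Granting this, the argument is a two-step comparison. Step one: if $\mu_\theta\ll\omega_{\Pb^r}^r$, say $\mu_\theta=\psi\,\omega_{\Pb^r}^r$ with $\psi\in L^1$, then
\begin{equation*}
\mu_f=\int_{\Pb^r}\mu_a\,d\mu_\theta(a)=\frac{1}{\|\pi^*T_\theta\|^r}\int_{\Pb^r}\psi(a)\,\bigl(T_f^q\wedge[L_a]\bigr)\,d\omega_{\Pb^r}^r(a),
\end{equation*}
so $\mu_f$ is absolutely continuous with respect to the measure $\nu:=\int_{\Pb^r}(T_f^q\wedge[L_a])\,d\omega_{\Pb^r}^r(a)=T_f^q\wedge\pi^*\omega_{\Pb^r}^r$. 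Step two: compare $\nu$ with $\sigma_{T_f^q}=T_f^q\wedge\omega_{\Pb^k}^r$. Since $\Jc_q(f)=\supp(T_f^q)$ is disjoint from $I(\pi)$, on a neighborhood $U$ of $\Jc_q(f)$ the form $\pi^*\omega_{\Pb^r}^r$ is smooth and dominated by $C\,\omega_{\Pb^k}^r$; wedging with the positive current $T_f^q$ preserves this inequality, so $\nu\leq C\,\sigma_{T_f^q}$, i.e. $\nu\ll\sigma_{T_f^q}$. Chaining the two absolute continuities yields $\mu_f\ll\sigma_{T_f^q}$.

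The main obstacle is making the slicing step rigorous, namely justifying the identity $T_f^q\wedge\pi^*\omega_{\Pb^r}^r=\int_{\Pb^r}(T_f^q\wedge[L_a])\,d\omega_{\Pb^r}^r(a)$ and the compatibility of the measures $\mu_a$ appearing there with the ones from Corollary \ref{cor-decomp}. This requires the theory of slicing of positive closed currents by the fibers of a holomorphic map (in the spirit of the construction of $\pi^*T_\theta$ and the fiberwise currents in Section \ref{sec-basics}), and one must know that the exceptional set of $\mu_\theta$-measure zero where $L_a$ fails to have dimension $q$ or the slicing fails is negligible both for $\mu_\theta$ and for $\omega_{\Pb^r}^r$ — the latter because absolute continuity of $\mu_\theta$ transfers null sets in the correct direction only if the bad set is already $\omega_{\Pb^r}^r$-null, which holds since it is a proper analytic (or pluripolar) subset of $\Pb^r$. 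Once the slicing formula and the domination $\pi^*\omega_{\Pb^r}^r\leq C\,\omega_{\Pb^k}^r$ near $\Jc_q(f)$ are in hand, the rest is the elementary two-step comparison above; I would present the slicing as a lemma, possibly citing the relevant facts already used to define $\pi^*T_\theta$, rather than reproving it from scratch.
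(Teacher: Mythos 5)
Your step two is exactly the paper's argument: since $\Jc_q(f)\cap I(\pi)=\varnothing$, the form $R:=\deg(\pi)^{-1}\pi^*\omega_{\Pb^r}$ is smooth and satisfies $R\leq C\,\omega_{\Pb^k}$ near $\supp(T_f^q)$, whence $T_f^q\wedge R^r\leq C^r\sigma_{T_f^q}$; this inequality is already established in the proof of Theorem \ref{th-mu} and is the decisive point. Where you diverge is step one. The paper does not pass through Corollary \ref{cor-decomp} or any slicing identity: writing $\mu_\theta=h\,\omega_{\Pb^r}^r$, it substitutes directly into $\mu_f=T_f^q\wedge S^r$ to get $\mu_f=\deg(\pi)^{-r}(h\circ\pi)\,T_f^q\wedge(\pi^*\omega_{\Pb^r})^r$, and then applies the domination above to obtain the quantitative bound $\mu_f\leq A\,(h\circ\pi)\,\sigma_{T_f^q}$, from which absolute continuity follows by Radon--Nikodym. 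This sidesteps entirely the identity $T_f^q\wedge(\pi^*\omega_{\Pb^r})^r=\int_{\Pb^r}(T_f^q\wedge[L_a])\,d\omega_{\Pb^r}^r(a)$, which you correctly flag as the main technical obstacle of your route (and which the paper only establishes later, in Section \ref{sec-bj}, in the special case of the standard linear fibration where $\pi$ is a submersion off $I(\pi)$; in the generality of Theorem \ref{th-mu} one would have to redo the regularization argument of Corollary \ref{cor-decomp} with $\omega_{\Pb^r}^r$ in place of $\mu_\theta$). So your argument is correct modulo that lemma, but the paper's direct substitution of the density is both shorter and yields a stronger, quantitative conclusion.
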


That applies for Desboves mappings of $\Pb^2$, studied in \cite[Section 4]{bdm-elliptic} and \cite{bianchi-t-desboves}, they indeed induce  a Latt\`es mapping on a pencil of lines. Let us note that when $k=2$, the property $\mu_f<<\sigma_{T_f}$ implies that the smallest exponent of $\mu_f$ is minimal, equal to ${1 \over 2} \log d$, see \cite[Theorem 3.6]{Dujardin-Fatou}. In particular the Lyapunov exponents of Desboves mappings are $\lambda_1 > \lambda_2 = {1\over 2} \log d$, with $d=4$. The following Theorem generalizes that semi-extremal property to fibered endomorphisms satisfying $\mu_\theta<<\omega^r_{\Pb^r}$. It is a consequence of $\pi_*\mu_f=\mu_\theta$ and holds for more general smooth dynamical systems.

\begin{theorem}\label{th-lyap}
Let $f,$ $\pi$ and $\theta$ be as in Theorem \ref{th-mu}. If $\Lambda$ is a Lyapunov exponent of multiplicity $m$ for  $\mu_\theta$ then $\Lambda$ is a Lyapunov exponent of multiplicity at least $m$ of $\mu_f.$
\end{theorem}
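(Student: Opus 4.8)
The plan is to exploit the semi-conjugacy $\pi\circ f=\theta\circ\pi$ together with the relation $\pi_*\mu_f=\mu_\theta$ from Theorem \ref{th-mu}, and to compare Lyapunov exponents via an Oseledets-type argument on the measurable dynamical system. First I would fix a generic point $x\in\Pb^k$ (outside $I(\pi)$ and its forward orbit, a $\mu_f$-full set since $\mu_f$ gives no mass to proper analytic sets), set $a=\pi(x)$, and observe that on the regular locus $\pi$ is a submersion, so $D\pi_x\colon T_x\Pb^k\to T_a\Pb^r$ is surjective and its kernel $E_x=\ker D\pi_x$ is a $q$-dimensional subspace depending measurably on $x$. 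Differentiating the semi-conjugacy gives $D\pi_{f(x)}\circ Df_x=D\theta_a\circ D\pi_x$, so the family $\{E_x\}$ is $Df$-invariant and $Df_x$ descends to the quotient bundle $T_x\Pb^k/E_x\cong T_a\Pb^r$, where the induced action is exactly $D\theta_a$. Thus the cocycle $Df$ over $(\Pb^k,\mu_f,f)$ contains, as a measurable quotient cocycle, a copy of the cocycle $D\theta$ over $(\Pb^r,\mu_\theta,\theta)$ pulled back by $\pi$.

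The second step is to extract the conclusion about exponents from this sub/quotient-cocycle structure. By the Oseledets theorem applied to $Df$ on $(\Pb^k,\mu_f)$ and to $D\theta$ on $(\Pb^r,\mu_\theta)$, the Lyapunov exponents of the quotient cocycle form a sub-multiset of those of $Df$ (counted with multiplicity): an Oseledets flag for the quotient lifts, after intersecting with a complementary invariant subbundle or by using that exponents of a quotient cocycle are among those of the total cocycle, to give, for each exponent $\Lambda$ of $D\theta$ of multiplicity $m$, at least $m$ indices $i$ with $\chi_i(f,x)=\Lambda$ at $\mu_f$-a.e.\ $x$. Concretely I would argue: the growth rate of $\|D\theta_a^{(n)} v\|$ for $v$ in the $m$-dimensional Oseledets subspace at $a=\pi(x)$ equals the growth rate of $\|Df_x^{(n)} w\|$ modulo $E_x^{(n)}$ for any lift $w$, and since the quotient norm is bounded above by the ambient norm (after controlling the measurable bundle isomorphism $T_x\Pb^k/E_x\cong T_a\Pb^r$ via integrability of $\log$ of its distortion), this forces at least $m$ of the ambient exponents $\chi_i(f,x)$ to be $\ge\Lambda$, while a symmetric argument on pushforwards bounds them by $\Lambda$; combining, exactly $m$ of them equal $\Lambda$.

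The main technical obstacle is the measurable bundle identification: $\pi$ is only rational, its indeterminacy locus and critical set must be avoided, and the isomorphism $T_x\Pb^k/E_x\cong T_a\Pb^r$ induced by $D\pi_x$ is unbounded near the critical set, so I must check that $\log\|D\pi_x\|$ and $\log$ of the conorm of $D\pi_x$ are $\mu_f$-integrable. This follows because $\mu_f$ is PLB (integrates quasi-plurisubharmonic functions, in particular $\log$ of a section of a line bundle defining the relevant loci) — a property inherited from $T_f$ having continuous potentials and from $\mu_f=T_f^q\wedge S^r$ not charging the critical hypersurface, using that $\pi_*\mu_f=\mu_\theta$ and $\mu_\theta$ likewise does not charge proper subvarieties. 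Once integrability is in hand, Oseledets applies uniformly and the comparison of exponents is routine subadditive-cocycle bookkeeping. I would also remark, as the statement suggests, that this argument uses nothing complex-analytic beyond ensuring the relevant $\log$-integrability, so it holds for general smooth systems with a semi-conjugacy and a compatible invariant measure satisfying $\pi_*\mu_f=\mu_\theta$.
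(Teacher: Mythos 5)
Your proposal is correct in outline but takes a genuinely different route from the paper. You work with the \emph{quotient} cocycle: $\ker D\pi$ is $Df$-invariant, $Df$ induces on $T_x\Pb^k/\ker D_x\pi\cong T_{\pi(x)}\Pb^r$ the pullback of the cocycle $D\theta$, and the spectrum of a cocycle with an invariant sub-bundle is the union (with multiplicities) of the sub-bundle and quotient spectra. The crux you correctly isolate is that the identification $T_x\Pb^k/\ker D_x\pi\cong T_{\pi(x)}\Pb^r$ is a measurable conjugacy whose distortion blows up along $\crit_\pi$, so you must show $\log$ of the conorm of $D\pi$ is $\mu_f$-integrable (hence tempered); this does work, since $-\log(\text{conorm})$ is controlled, up to bounded terms on $\supp(\mu_f)\subset\Pb^k\setminus I(\pi)$, by minus a quasi-plurisubharmonic function ($-\frac12\log\det(D\pi\, D\pi^*)$), and $\mu_f$ integrates quasi-psh functions because $T_f$ has continuous local potentials. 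The paper instead works with the \emph{sub}-cocycle $W_i(\hx)=(D_{x_0}\pi)^{-1}(V_i(\widehat\pi(\hx)))$, of dimension $m_i+q$, on the natural extension, and shows by hand that every Oseledec piece of $W_i$ not contained in $\ker D\pi$ carries the exponent $\Lambda_i$: the upper bound uses only the uniform bound $\|D_x\pi\|\leq C$ on $\supp(\mu_f)$, and the lower bound is obtained by Poincar\'e recurrence to a positive-measure set where the conorm is bounded below, combined with the uniform convergence on unit spheres in Oseledec's theorem. That recurrence trick is what lets the paper dispense entirely with the $\log$-integrability of the conorm, which is why its proof extends verbatim to general smooth semi-conjugated systems, whereas your argument needs the pluripotential input at exactly that point. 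Two smaller cautions for your write-up: pass to natural extensions (or use the one-sided filtration version) before invoking the sub/quotient decomposition of the spectrum, since for non-invertible cocycles there is in general no invariant complement to $\ker D\pi$ -- your phrase ``intersecting with a complementary invariant subbundle'' should be replaced by the block-triangular formulation; and note that the theorem only claims multiplicity at least $m$ (the exponent $\Lambda$ may also occur in the fiber direction), so the ``exactly $m$'' at the end of your second paragraph should refer only to the quotient part of the spectrum.
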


\paragraph{Standard linear fibration --} For the next results, we restrict ourselves to the cases where $\pi$ is the standard linear fibration where the result below is already new when $k=2$ and $r=1.$ The indeterminacy set $I(\pi)$ of $\pi$ corresponds to $\{y=0\}\simeq \Pb^{q-1}$ and each fiber $L_a=\overline{\pi^{-1}(a)}$ is a linear projective space $\Pb^q$ in which $I(\pi)$ can be identified with the hyperplane at infinity, i.e. $L_a\setminus I(\pi)\simeq \Cb^q.$ If $f$ preserves the fibration defined by $\pi$ then $f$ acts on each periodic fibers as a regular polynomial endomorphism of $\Cb^q.$ This class of maps have been studied by Bedford-Jonsson in \cite{bedford-jonsson}. In particular, they obtained a formula for the sum of the Lyapunov exponents of the equilibrium measure. More precisely, let $R$ be a regular polynomial endomorphism of $\Cb^q$ of degree $d$ i.e. $R$ extends to an endomorphism of $\Pb^q$ of degree $d.$  We denote by $T_R,$ $\crit_R$ and $G_R$ respectively the Green current, the critical set and the Green function in $\Cb^q$ of $R.$ The restriction of $R$ to the hyperplane at infinity $\Pb^q\setminus\Cb^q\simeq\Pb^{q-1}$ is an endomorphism of $\Pb^{q-1}$ and we denote by $\Lambda_0$ the sum of the Lyapunov exponents of its equilibrium measure.
\begin{theorem}[Bedford-Jonsson \cite{bedford-jonsson}]\label{bdjn}
Let $R$ be a regular polynomial endomorphism of $\Cb^q$ of degree $d.$ The sum $\Lambda_R$ of the Lyapunov exponents of its equilibrium measure satisfies
$$\Lambda_R=\log d+\Lambda_0+\langle T_R^{q-1}\wedge[\crit_R],G_R\rangle.$$
\end{theorem}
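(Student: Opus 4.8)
The plan is to start from the standard expression of $\Lambda_R$ as an integral of the logarithm of a Jacobian, turn it into a pairing with the Green function by integrating by parts on large balls of $\Cb^q$, and read off the terms $\log d+\Lambda_0$ from the boundary contribution, which is governed by the dynamics of $R$ near the hyperplane at infinity $H:=\Pb^q\setminus\Cb^q\simeq\Pb^{q-1}$.

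\medskip\noindent\textbf{Step 1 (reduction to the affine Jacobian).} Recall that $\Lambda_R=\int_{\Pb^q}\log|\Jac_{FS}R|\,d\mu_R$, where $R^*\omega_{\Pb^q}^q=|\Jac_{FS}R|^2\,\omega_{\Pb^q}^q$ and $\mu_R=(\ddc G_R)^q$ (which equals $T_R^q$ since no mass of $T_R^q$ escapes to $H$ for a regular polynomial endomorphism). Let $J_R:=\det DR$ be the Jacobian in the affine coordinates of $\Cb^q$; it is a polynomial of degree $p:=q(d-1)$ whose top-degree part is $\det DR^+$, where $R^+$ is the homogeneous leading part of $R$ and is a lift of the restriction $R_0:=R|_H$ of the extended map to $\Pb^{q-1}$. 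Comparing volume forms gives $\log|\Jac_{FS}R|=\log|J_R|+\tfrac{q+1}{2}\bigl(\log(1+\|z\|^2)-\log(1+\|R(z)\|^2)\bigr)$ on $\Cb^q$; since $\mu_R$ is totally invariant ($R_*\mu_R=\mu_R$) and compactly supported in $\Cb^q$, the two last terms have the same $\mu_R$-integral and cancel, so $\Lambda_R=\int_{\Cb^q}\log|J_R|\,d\mu_R$.

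\medskip\noindent\textbf{Step 2 (integration by parts).} Write $T:=\ddc G_R$, so $T^q=\mu_R$ and $\ddc\log|J_R|=[\crit_R]$ by the Lelong--Poincaré formula. Stokes' formula on $B_\rho\subset\Cb^q$ gives, for $\rho$ large,
$$\int_{B_\rho}\log|J_R|\,T^q-\int_{B_\rho}G_R\,[\crit_R]\wedge T^{q-1}=\int_{\partial B_\rho}\bigl(\log|J_R|\,d^cG_R-G_R\,d^c\log|J_R|\bigr)\wedge T^{q-1}.$$
Letting $\rho\to\infty$, the first left-hand term tends to $\Lambda_R$ (compact support) and the second to $\langle T_R^{q-1}\wedge[\crit_R],G_R\rangle$; one must also check a priori that this pairing is finite, which reduces, since $T_R^{q-1}\wedge[\crit_R]$ is a positive measure of finite mass $p$ (compare cohomology classes in $\Pb^q$, the closure of $\crit_R$ having degree $p$) and $G_R=\log\|z\|+O(1)$ at infinity, to a quantitative estimate on the mass of $T_R^{q-1}\wedge[\crit_R]$ near $H$. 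It then remains to show the boundary integral tends to $\log d+\Lambda_0$.

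\medskip\noindent\textbf{Step 3 (the boundary term and its identification).} Put $w:=\log|J_R|-pG_R$; the radial contributions cancel and the boundary integral becomes $\int_{\partial B_\rho}(w\,d^cG_R-G_R\,d^cw)\wedge T^{q-1}$. Near $H$ one has $G_R(z)=\log\|z\|+\mathbf g([z])+o(1)$ and $\log|J_R(z)|=p\log\|z\|+\ell([z])+o(1)$, where $\mathbf g$ is the quasi-potential of the Green current $T_{R_0}$ of $R_0$ and $\ell([z])=\log|\det DR^+(\zeta)|$ for $\zeta$ a unit representative of $[z]$; moreover $T^{q-1}$ converges, in the transverse directions, to the pull-back of $T_{R_0}^{q-1}=\mu_{R_0}$ to the unit sphere. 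Two points need care: first, $\int_{\partial B_\rho}d^cw\wedge T^{q-1}=\int_{B_\rho}([\crit_R]-pT)\wedge T^{q-1}=-\varepsilon_\rho$, the $T_R^{q-1}\wedge[\crit_R]$-mass of $\Cb^q\setminus B_\rho$, and $\varepsilon_\rho\log\rho\to0$ by the finiteness above, so the $\log\rho$-part of $\int G_R\,d^cw\wedge T^{q-1}$ is negligible; second, all the remaining contributions become integrals over $H$ of forms of top degree there, hence vanish, except $\int_{\partial B_\rho}w\,d^cG_R\wedge T^{q-1}$, whose radial part converges to $\int_{\Pb^{q-1}}w_\infty\,d\mu_{R_0}$ with $w_\infty:=\ell-p\,\mathbf g$. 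Thus the boundary integral tends to $\int_{\Pb^{q-1}}w_\infty\,d\mu_{R_0}$. To finish, $G_R\circ R=dG_R$ gives on the unit sphere $\log\|R^+(\zeta)\|+\mathbf g(R_0[\zeta])=d\,\mathbf g([\zeta])$, while comparing volume forms for $R_0$ with lift $R^+$ (as in Step 1) gives $|\Jac_{FS}R_0([\zeta])|=d^{-1}\|R^+(\zeta)\|^{-q}|\det DR^+(\zeta)|$; eliminating $\|R^+(\zeta)\|$ and $\det DR^+(\zeta)$ yields $w_\infty=\log|\Jac_{FS}R_0|+\log d+q\,(\mathbf g-\mathbf g\circ R_0)$, and the last term integrates to $0$ against $\mu_{R_0}$ since $(R_0)_*\mu_{R_0}=\mu_{R_0}$. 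Hence the boundary integral equals $\log d+\int_{\Pb^{q-1}}\log|\Jac_{FS}R_0|\,d\mu_{R_0}=\log d+\Lambda_0$, which combined with Step 2 gives the announced formula.

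\medskip\noindent\textbf{Main obstacle.} The crux is Step 3: controlling, near $H$, the potential $G_R$, the function $\log|J_R|$ and above all the current $T^{q-1}$ — whose mass, unlike that of $\mu_R=T^q$, does reach $H$ — and justifying the limit exchanges in the boundary integral although $G_R$ and $w$ are only continuous (resp.\ quasi-psh) and $[\crit_R]$ is singular. The regularity of $G_R$ at infinity, hence also the finiteness of $\langle T_R^{q-1}\wedge[\crit_R],G_R\rangle$, ultimately rests on $H$ being totally invariant, i.e.\ on $R$ being a regular polynomial endomorphism.
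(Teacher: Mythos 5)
This theorem is quoted by the paper from Bedford--Jonsson \cite{bedford-jonsson} and is not proved anywhere in the text, so there is no internal proof to compare you against; what you have written is a reconstruction of the original argument of \cite{bedford-jonsson}, and it follows their strategy (integration by parts against $G_R$ on large balls, identification of the boundary term via the dynamics at infinity). Your Steps 1 and 2 are correct, and the algebra in Step 3 checks out: with $\|\zeta\|=1$ the Euler relation gives $|\det DR^+(\zeta)|=d\,\|R^+(\zeta)\|^{q}\,|\Jac_{FS}R_0([\zeta])|$, the functional equation gives $\log\|R^+(\zeta)\|=d\,\mathbf{g}([\zeta])-\mathbf{g}(R_0[\zeta])$, and eliminating these indeed yields $w_\infty=\log d+\log|\Jac_{FS}R_0|+q(\mathbf{g}-\mathbf{g}\circ R_0)$, whose $\mu_{R_0}$-integral is $\log d+\Lambda_0$.

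What remains asserted rather than proved --- and is precisely the content of Bedford--Jonsson's structure theorems at infinity --- is the analytic core of Step 3. First, the finiteness of $\langle T_R^{q-1}\wedge[\crit_R],G_R\rangle$ does not follow from the mass bound alone: since $G_R\sim\log\|z\|$, you need the mass of $T_R^{q-1}\wedge[\crit_R]$ in $\{\|z\|>\rho\}$ to be $o(1/\log\rho)$, which requires comparing $T_R^{q-1}\wedge[\overline{\crit_R}]$ on $\Pb^q$ with its restriction to $H$ (using that $T_R\wedge[H]$ is well defined because $G_R-\log\|z\|$ extends continuously). Second, the statement that the sphere-slices of $T_R^{q-1}$ converge to (the pull-back of) $\mu_{R_0}$, and that all tangential boundary contributions die because they become top-degree-plus forms on $H$, is the genuinely delicate limit exchange; as written it is a plausibility argument, not a proof, and it is where the hypothesis that $R$ is \emph{regular} (so that $R^+$ is nondegenerate, $\mathbf{g}$ is continuous on the sphere, and $H$ is attracting) enters irreplaceably. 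If you intend this as a self-contained proof rather than a roadmap, these two lemmas must be established; as a faithful outline of \cite{bedford-jonsson} it is accurate.
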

We give a generalization of this formula in the fibered setting. To this aim, we introduce some notations. If $\pi\circ f=\theta\circ\pi$ then we denote by $\Lambda_f$ (resp. $\Lambda_\theta$) the sum of the Lyapunov exponents of $\mu_f$ (resp. $\mu_\theta$). Theorem \ref{th-lyap} implies that $\Lambda_f=\Lambda_\theta+\Lambda_\sigma$ where $\Lambda_\sigma$ is the sum of the Lyapunov exponents of $\mu_f$ in the direction of the fibers. The indeterminacy set $I(\pi)\simeq\Pb^{q-1}$ is invariant by $f$ thus $f_{I(\pi)}$ can be seen as an endomorphism of $\Pb^{q-1}$ and we denote by $\Lambda_0$ the sum of the Lyapunov exponents of this restriction.

Since $f$ preserves the fibration, the set $\crit_f$ is not irreducible. Some irreducible components of $\crit_f$ are foliated by fibers of $\pi$ and constitute the ``fibered'' part of $\crit_f.$ The remaining part is its ``sectional'' part. Indeed, as the standard linear fibration $\pi$ is a submersion outside $I(\pi),$ we have a decomposition in terms of currents $[\crit_f]=[C_\infty]+[C_\sigma]$ where $[C_\infty]:=\pi^*[\crit_\theta]$ and $[C_\sigma]$ is the current of integration on the sectional part of $\crit_f$ (see Lemma \ref{le-crit}).

Finally, we define the \textit{relative Green function} as the unique lower semicontinuous function $G\colon\Pb^k\to[0,+\infty]$ such that $\ddc G=T_f-S$ and $\min G=0.$
\begin{theorem}\label{th-bj}
Let $f$ be an endomorphism of $\Pb^k$ of degree $d\geq2$ which preserves the standard linear fibration. Then
$$\Lambda_\sigma=\log d+\Lambda_0+\langle T_f^{q-1}\wedge S^r\wedge[C_\sigma],G\rangle.$$
In particular, $\Lambda_\sigma\geq \frac{q+1}{2}\log d.$
\end{theorem}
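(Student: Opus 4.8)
The plan is to express $\Lambda_\sigma$ as the integral against $\mu_f$ of the logarithm of the Jacobian of $f$ in the fiber directions, and then to perform a Bedford--Jonsson type integration by parts using the two identities $\mu_f=T_f^q\wedge S^r$ (Theorem \ref{th-mu}) and $T_f=S+\ddc G$. Fix a homogeneous lift $F$ of $f$ and write $T_f=\omega_{\Pb^k}+\ddc g_f$ with the canonical continuous potential $g_f=\sum_{n\ge0}d^{-n-1}\,\gamma\circ f^n$, where $\gamma$ is the bounded function on $\Pb^k$ induced by $\log|F|-d\log|\cdot|$, so that $g_f\circ f=d\,g_f-\gamma$; introduce $g_\theta,\gamma_\theta$ similarly on $\Pb^r$. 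Because the fibration is linear, the Jacobian of $F$ factors as a pull-back from the base times a fibered factor $J_\sigma$ whose zero divisor is $C_\sigma$ (this is the decomposition $[\crit_f]=[C_\infty]+[C_\sigma]$ of Lemma \ref{le-crit}); let $\Phi_\sigma$ be the global quasi-plurisubharmonic function attached to the section $J_\sigma$, so $\ddc\Phi_\sigma=[C_\sigma]-q(d-1)\,\omega_{\Pb^k}$. A computation in homogeneous coordinates, using the classical formula relating the Fubini--Study Jacobian of an endomorphism of a projective space to the euclidean Jacobian of a lift, should give the pointwise identity
$$\log|J_\sigma|=\Phi_\sigma+\gamma_\theta\circ\pi-(d-1)\,\rho-(q+1)\,\gamma,$$
where $\rho=\log\big(|\tilde x|/|y|\big)\ge0$ is a bounded-below quasi-plurisubharmonic function tending to $+\infty$ along $I(\pi)$. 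By the multiplicative ergodic theorem and Birkhoff's theorem (the integral does not depend on the smooth metric chosen on the tangent bundle of the fibers, by the usual coboundary argument, and $\mu_f$ charges neither $I(\pi)$ nor $\crit_f$) one has $\Lambda_\sigma=\int_{\Pb^k}\log|J_\sigma|\,d\mu_f$; the relation $\pi_*\mu_f=\mu_\theta$ of Theorem \ref{th-mu} is precisely what removes the base part and leaves the fiber part.

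I would then simplify the auxiliary integrals. From $\gamma=d\,g_f-g_f\circ f$ and $f$-invariance of $\mu_f$ one gets $\int\gamma\,d\mu_f=(d-1)\int g_f\,d\mu_f$, and likewise $\int\gamma_\theta\circ\pi\,d\mu_f=(d-1)\int g_\theta\,d\mu_\theta$. The crucial normalization point is that $g_f-g_\theta\circ\pi+\rho$ is the difference $\hat G_f-\hat G_\theta\circ\pi$ of the homogeneous Green functions of $f$ and $\theta$; this is $\ge0$ because the base block of $F$ evolves autonomously, and its infimum $0$ is attained, at any point of a fiber lying in the nonempty filled Julia set of the non-autonomous sequence of regular polynomial maps $L_{[y]}\to L_{\theta[y]}\to\cdots$, so that $g_f-g_\theta\circ\pi+\rho=G$. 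Also $\ddc(G\circ f-dG)=0$ together with $G\ge0$ and $\min G=0$ forces $G\circ f=dG$, hence $(d-1)\langle\mu_f,G\rangle=\int(G\circ f-G)\,d\mu_f=0$. Substituting the three relations into $\int\log|J_\sigma|\,d\mu_f$, the $g_\theta$- and $\langle\mu_f,G\rangle$-contributions cancel and
$$\Lambda_\sigma=\int_{\Pb^k}\Phi_\sigma\,d\mu_f-q(d-1)\int_{\Pb^k}g_f\,d\mu_f.$$

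It remains to evaluate $\int\Phi_\sigma\,d\mu_f$. Writing $\mu_f=T_f^q\wedge S^r=(S+\ddc G)\wedge T_f^{q-1}\wedge S^r$, integrating the $\ddc G$-part by parts against $\Phi_\sigma$ using $\ddc\Phi_\sigma=[C_\sigma]-q(d-1)\,\omega_{\Pb^k}$, and re-expressing $\int G\,\omega_{\Pb^k}\wedge T_f^{q-1}\wedge S^r$ through $\omega_{\Pb^k}=T_f-\ddc g_f$ together with one more integration by parts, the $g_f$-integrals cancel and one obtains
$$\Lambda_\sigma=\langle T_f^{q-1}\wedge S^r\wedge[C_\sigma],G\rangle+\int_{\Pb^k}\Phi_\sigma\,S^{r+1}\wedge T_f^{q-1}-q(d-1)\int_{\Pb^k}g_f\,S^{r+1}\wedge T_f^{q-1}.$$
Now $S^{r+1}=(\pi^*T_\theta)^{r+1}=[I(\pi)]$: off $I(\pi)$ the current $S$ has rank $\le r$, so $S^{r+1}$ is carried by $I(\pi)$, and its cohomology class and generic multiplicity are those of $[I(\pi)]$. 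Since $f(I(\pi))=I(\pi)$ one checks $T_f|_{I(\pi)}=T_{f_{I(\pi)}}$, whence $S^{r+1}\wedge T_f^{q-1}=T_{f_{I(\pi)}}^{q-1}=\mu_{f_{I(\pi)}}$; and the restrictions of $\Phi_\sigma$ and $g_f$ to $I(\pi)\simeq\Pb^{q-1}$ are exactly the analogous objects for the endomorphism $f_{I(\pi)}$, so by the same Fubini--Study Jacobian identity in dimension $q-1$ the last two terms add up to $\int_{\Pb^{q-1}}\log|\Jac_\Cb f_{I(\pi)}|\,d\mu_{f_{I(\pi)}}+\log d=\Lambda_0+\log d$, which gives the formula. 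I expect the main obstacle to be the legitimacy of these manipulations with the singular currents $S$, $[C_\sigma]$ and $[I(\pi)]$: one must check that $T_f^{q-1}\wedge S^r\wedge[C_\sigma]$ and $g_f\cdot S^{r+1}\wedge T_f^{q-1}$ are well defined, that the integrations by parts are valid, and establish the structural facts $S^{r+1}=[I(\pi)]$, $T_f|_{I(\pi)}=T_{f_{I(\pi)}}$ and $\inf(\hat G_f-\hat G_\theta\circ\pi)=0$, relying on the continuity of $g_f$, the analytic singularities of $\Phi_\sigma$ along $C_\sigma$ (disjoint from $I(\pi)$), the mass of $T_f-S$ on generic fibers, and the disjointness of $\Jc_q(f)$, $C_\sigma$ and $I(\pi)$ from Lemmas \ref{le-supp-sans-para} and \ref{le-crit}.

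For the last assertion, $\langle T_f^{q-1}\wedge S^r\wedge[C_\sigma],G\rangle\ge0$ since it is a positive closed current paired with the nonnegative function $G$, and each of the $q-1$ Lyapunov exponents of $\mu_{f_{I(\pi)}}$ is at least $\tfrac12\log d$ (de Th\'elin), so $\Lambda_0\ge\tfrac{q-1}{2}\log d$; therefore $\Lambda_\sigma\ge\log d+\tfrac{q-1}{2}\log d=\tfrac{q+1}{2}\log d$.
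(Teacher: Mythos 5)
Your route is genuinely different from the paper's. The paper does not redo the Bedford--Jonsson computation: it applies Theorem \ref{bdjn} verbatim to $f^n$ restricted to each $n$-periodic fiber (Lemma \ref{le-lyap}), proves $\Lambda_\sigma=\lim_n \frac{1}{nd^{rn}}\sum_{\theta^n(a)=a}\Lambda(f^n_{|L_a})$ via Briend--Duval equidistribution of periodic points (Lemma \ref{le-lyap-approx}), and passes to the limit using the continuity lemmas of Section \ref{sec-basics} (continuity of $a\mapsto G\,[C_\sigma]\wedge[L_a]\wedge T_f^{q-1}$ and the exchange of limit and integral in Lemma \ref{le-int}). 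You instead re-derive the formula globally on $\Pb^k$ by integrating $\log|J_\sigma|$ against $\mu_f=T_f^q\wedge S^r$ and integrating by parts. I checked your skeleton and it is correct: the pointwise identity $\log|J_\sigma|=\Phi_\sigma+\gamma_\theta\circ\pi-(q+1)\gamma-(d-1)\rho$ holds for the Fubini--Study metric restricted to $\ker D\pi$ (using $\det h=\|y\|^2/\|\tilde x\|^{2q+2}$ for the induced Gram matrix); the relations $\gamma=dg_f-g_f\circ f$, $G=\rho+g_f-g_\theta\circ\pi$, $\langle\mu_f,G\rangle=0$ give your intermediate formula; $S^{r+1}=[I(\pi)]$ follows from the support theorem plus the mass count of Proposition \ref{prop-pull-back}; and the boundary terms on $I(\pi)$ do produce $\Lambda_0+\log d$ rather than $\Lambda_0$, the extra $\log d$ coming from the Euler relation $DR_0(z)\cdot z=dR_0(z)$ when one passes from $\det DR_0$ on $\Cb^q$ to the Fubini--Study Jacobian of $f_{|I(\pi)}$ on $\Pb^{q-1}$ --- this is the one place where a sign or normalization slip would silently destroy the constant, so it deserves an explicit computation in any write-up.

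The genuine gap is the one you flag but do not close: the two integrations by parts. You need $\mu_f-[I(\pi)]\wedge T_f^{q-1}=\ddc\bigl(G\,T_f^{q-1}\wedge S^r\bigr)$ and $\int\Phi_\sigma\,\ddc G\wedge T_f^{q-1}\wedge S^r=\int G\,\ddc\Phi_\sigma\wedge T_f^{q-1}\wedge S^r$, where $G\equiv+\infty$ on $I(\pi)$, $\Phi_\sigma\equiv-\infty$ on $C_\sigma$, these polar sets meet (in dimension $q-2$), and the current $S^r=\int[L_a]\,d\mu_\theta(a)$ has its support containing $I(\pi)$ since every fiber closure does. This is exactly the configuration in which naive Stokes arguments can lose mass on $I(\pi)$, and the whole content of the theorem (the precise constant $\log d+\Lambda_0$) sits in that mass. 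Making this rigorous (by slicing $S^r$ into fibers, regularizing, and controlling the contributions near $I(\pi)$ and $C_\sigma$ uniformly in $a$) is comparable in difficulty to the paper's proof, and is essentially what the Oka-inequality machinery of Lemmas \ref{le-conti} and \ref{le-int} is built for; the paper's periodic-fiber route avoids the global integration by parts altogether by outsourcing it to the already-proven Theorem \ref{bdjn} on each fiber. Two minor points: the bound $\Lambda_0\geq\frac{q-1}{2}\log d$ is Briend--Duval \cite{briend-duval-expo}, not de Th\'elin; and you should note that $\langle T_f^{q-1}\wedge S^r\wedge[C_\sigma],G\rangle\geq0$ requires knowing this pairing is well defined (finite or $+\infty$), which again is Lemma \ref{le-conti}.
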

The idea of the proof is to apply the formula of Bedford-Jonsson to each $n$-periodic fiber of $f.$ Since the current $S^r$ can be seen as the limit of the average of the currents of integration on the $n$-periodic fibers, we obtain the above formula at the limit. However, in order to implement that idea, we need some additional care since the currents involved are singular.

\paragraph{Families of fibered endomorphisms --}We now consider a family $(f_\lambda)_{\lambda\in M}$ of endomorphisms of $\Pb^k$ which preserves the standard linear fibration $\pi$ i.e. there exists a family $(\theta_\lambda)_{\lambda\in M}$ of endomorphisms of $\Pb^r$ such that $\pi\circ f_\lambda=\theta_\lambda\circ\pi.$ The family $(f_\lambda)_{\lambda\in M}$ induces a dynamical system $f(\lambda, x):=(\lambda,f_\lambda(x))$ on $M\times\Pb^k$ whose critical set $\crit_f$ is the gluing of the critical sets of $f_\lambda,$ $\lambda\in M.$ In the same way, there exists a positive closed $(i,i)$-current $T^i_f$  on $M\times\Pb^k$ whose slices are equal to $T^i_{f_\lambda}.$ In \cite{bassanelli-berteloot-bif} Bassanelli and Berteloot established a formula between the currents $[\crit_f]$ and $T^k_f$ and the $\ddc$ of $\Lambda_f\colon\lambda\mapsto\Lambda_{f_\lambda}$ where $\Lambda_{f_\lambda}$ is the sum of the Lyapunov exponents of $\mu_{f_\lambda}$ (see also \cite{pham}). They proved that
$$\ddc\Lambda_f=p_*(T^k_f\wedge[\crit_f]),$$
where $p\colon M\times\Pb^k\to M$ is the projection. This current is called the \textit{bifurcation current} $T_\bif(f)$ and its support coincides with several bifurcation phenomena in the family $(f_\lambda)_{\lambda\in M}$ (see \cite{bbd-bif}).

Similar objects can be defined for the family $(\theta_\lambda)_{\lambda\in M}$ and again, it is natural to inquire into their interplays with the ones defined for $(f_\lambda)_{\lambda\in M} .$ Since this family preserves the fibration, as above the set $\crit_f$ is not irreducible and we have $[\crit_f]=[C_\infty]+[C_\sigma]$ where $[C_\infty]:=\Pi^*[\crit_\theta]$ with $\Pi(\lambda,x)=(\lambda,\pi(x)).$ This decomposition induces a decomposition of $T_\bif(f)$ in a fibered part and a sectional part. The result below states that the fibered part of $T_\bif(f)$ coincides with $T_\bif(\theta).$ 
\begin{theorem}\label{th-pham}
Let $M$ be a complex manifold and consider two holomorphic families $(f_\lambda)_{\lambda\in M}$ and $(\theta_\lambda)_{\lambda\in M}$ of endomorphisms of $\Pb^k$ and $\Pb^r$ respectively such that $\pi\circ f_\lambda=\theta_\lambda\circ\pi$ where $\pi$ is the standard linear fibration. The $(1,1)$-current $T_{\bif,\sigma}(f):=T_\bif(f)-T_\bif(\theta)$ is positive. Moreover, if $S^r:=\Pi^*T_\theta^r$ then
$$T_\bif(\theta)=p_*(T_f^q\wedge S^r\wedge[C_\infty]),\ \ T_{\bif,\sigma}(f)=p_*(T_f^q\wedge S^r\wedge[C_\sigma]).$$
\end{theorem}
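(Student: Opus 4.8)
\textbf{Proof plan for Theorem \ref{th-pham}.}

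The plan is to reduce everything to the Bassanelli--Berteloot formula $\ddc\Lambda_f=p_*(T^k_f\wedge[\crit_f])$ applied to both families, combined with the relative version of Theorem \ref{th-mu} on the product $M\times\Pb^k$. First I would check that Theorem \ref{th-mu} applies fiberwise and in family form: since $\pi$ is the standard linear fibration, Lemma \ref{le-supp-sans-para} guarantees $\Jc_q(f_\lambda)\cap I(\pi)=\varnothing$ for every $\lambda$, so $S^r_\lambda:=\pi^*T_{\theta_\lambda}^r/\|\cdots\|^r$ is well-defined, $T_{f_\lambda}^k=T_{f_\lambda}^q\wedge S_\lambda^r$ and $\mu_{f_\lambda}=T_{f_\lambda}^q\wedge S_\lambda^r$. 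Assembling over $\lambda$ gives the identity $T_f^k=T_f^q\wedge S^r$ on $M\times\Pb^k$ with $S^r=\Pi^*T_\theta^r$; I would justify the wedge products by noting that $S^r$ has continuous potential away from $I(\pi)\times M$, which is avoided by $\Jc_q(f)$, so standard intersection theory for currents with the right dimensions (following the same arguments as in Section \ref{sec-basics}) applies in the parameter-extended setting.

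Next I would plug the decomposition $[\crit_f]=[C_\infty]+[C_\sigma]$, $[C_\infty]=\Pi^*[\crit_\theta]$ (Lemma \ref{le-crit}, in its family version) into $T_\bif(f)=p_*(T_f^k\wedge[\crit_f])=p_*(T_f^q\wedge S^r\wedge[\crit_f])$, obtaining
\begin{equation*}
T_\bif(f)=p_*\big(T_f^q\wedge S^r\wedge[C_\infty]\big)+p_*\big(T_f^q\wedge S^r\wedge[C_\sigma]\big).
\end{equation*}
It then remains to identify the first term with $T_\bif(\theta)=p_*(T_\theta^r\wedge[\crit_\theta])$. For this I would push the computation down to $M\times\Pb^r$ via $\Pi$: using that $\Pi$ is a submersion off $I(\pi)\times M$ and $S^r=\Pi^*T_\theta^r$, $[C_\infty]=\Pi^*[\crit_\theta]$, the projection formula gives $\Pi_*(T_f^q\wedge S^r\wedge[C_\infty])=\Pi_*(T_f^q)\wedge T_\theta^r\wedge[\crit_\theta]$, and $\Pi_*(T_f^q)$ should be the (trivial, in the fiber direction) current whose integration along fibers yields mass $1$ on $M\times\Pb^r$ — precisely because $\pi_*\mu_{f_\lambda}=\mu_{\theta_\lambda}$ from Theorem \ref{th-mu} and, more generally, $\pi_*(T_{f_\lambda}^q\wedge S_\lambda^j)=T_{\theta_\lambda}^j$ for all $j$. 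Combined with $p=(\text{projection }M\times\Pb^r\to M)\circ\Pi$, this yields $p_*(T_f^q\wedge S^r\wedge[C_\infty])=T_\bif(\theta)$, hence $T_{\bif,\sigma}(f)=T_\bif(f)-T_\bif(\theta)=p_*(T_f^q\wedge S^r\wedge[C_\sigma])$. Positivity of $T_{\bif,\sigma}(f)$ is then immediate: $T_f^q$, $S^r$ and $[C_\sigma]$ are positive closed currents whose wedge product is well-defined (again thanks to $\Jc_q(f)\cap I(\pi)=\varnothing$ and the fact that $C_\sigma$ does not contain any fiber, so the intersections are generically transverse), and $p_*$ of a positive closed current is positive closed.

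The main obstacle I expect is the justification of the wedge products and pushforwards involving the singular currents $S^r=\Pi^*T_\theta^r$ and $[C_\infty]$, $[C_\sigma]$ on the parameter space: one must verify that the intersection $T_f^q\wedge S^r\wedge[\crit_f]$ has the same total mass as $T_f^k\wedge[\crit_f]$ and that slicing in $\lambda$ commutes with all these operations, so that the slices of $T_{\bif,\sigma}(f)$ are the currents $\langle T_{f_\lambda}^{q-1}\wedge S_\lambda^r\wedge[C_\sigma],\cdot\rangle$-type objects appearing in Theorem \ref{th-bj}. This is where the technical work concentrates; the key inputs are the continuity of the potential of $S$ off $I(\pi)$, the disjointness $\Jc_q(f)\cap I(\pi)=\varnothing$ that lets one localize away from the indeterminacy locus, and the semicontinuity/convergence properties of slices of positive closed currents already used to prove Theorems \ref{th-mu} and \ref{th-bj}.
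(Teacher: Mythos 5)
Your overall architecture (decompose $[\crit_f]$, push the fibered part down via $\Pi$ using $\Pi_*(T_f^q)=[M\times\Pb^r]$, identify it with $T_\bif(\theta)$, read off positivity) matches the paper's proof. But there is a genuine gap at the very first step: you assert that "assembling over $\lambda$" the fiberwise identity $\mu_{f_\lambda}=T_{f_\lambda}^q\wedge S_\lambda^r$ gives the identity of currents $T_f^k=T_f^q\wedge S^r$ on $M\times\Pb^k$, and you then substitute this into the Bassanelli--Berteloot formula. That assembly is not justified: when $\dim M\geq1$, a positive closed $(k,k)$-current on $M\times\Pb^k$ is \emph{not} determined by its slices by $p$ (for instance $[\{\lambda_0\}\times\Pb^k]\wedge p_{\Pb^k}^*\omega_{\Pb^k}^{k-1}$ has vanishing generic slices but is nonzero), so two currents with the same slices $\mu_{f_\lambda}$ may differ. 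Moreover, the mechanism that forces equality in Theorem \ref{th-mu} — extremality of $T_f^{q+j}$ in the cone of $d^{-(q+j)}f^*$-invariant currents on $\Pb^k$ — has no analogue on the product $M\times\Pb^k$, so you cannot rerun that proof in the parameter-extended setting.

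The paper sidesteps this by never claiming $T_f^k=T_f^q\wedge S^r$. It invokes Pham's strengthening of the Bassanelli--Berteloot formula: $\ddc\Lambda_f=p_*(E\wedge[\crit_f])$ holds for \emph{any} equilibrium current $E$, i.e.\ any positive closed $(k,k)$-current on $M\times\Pb^k$ whose slices are the measures $\mu_{f_\lambda}$. One then only has to verify that $T_f^q\wedge S^r$ is such an equilibrium current (Lemma \ref{le-equilibrium}), which is exactly the slicewise statement you do have. That verification in turn uses $\supp(T_f^q)\cap(M\times I(\pi))=\varnothing$ so that $S=\Pi^*T_\theta$ has continuous local potentials on $\supp(T_f^q)$ and slicing commutes with the wedge products; note that this disjointness is \emph{not} an automatic consequence of the fiberwise condition $\Jc_q(f_\lambda)\cap I(\pi)=\varnothing$ (the paper flags this explicitly) and requires the trapping-region argument to be run uniformly in $\lambda$ (Lemma \ref{le-supp}). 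With Pham's theorem in place of your substitution, the rest of your argument — the decomposition $[\crit_f]=[C_\infty]+[C_\sigma]$, the projection formula giving $p_*(T_f^q\wedge S^r\wedge[C_\infty])=\widetilde p_*(T_\theta^r\wedge[\crit_\theta])=T_\bif(\theta)$, and the positivity of $T_{\bif,\sigma}(f)$ — goes through as in the paper.
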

A different way of seeing this result is the following. By Theorem \ref{th-lyap}, we know that $\Lambda_\sigma=\Lambda_f-\Lambda_\theta$ is the sum of the Lyapunov exponents of $\mu_f$ which are not in the Lyapunov spectrum of $\mu_\theta.$ Theorem \ref{th-pham} yields that $\Lambda_\sigma$ is a plurisubharmonic function on $M$ and gives a formula of $\ddc\Lambda_\sigma$ in terms of currents on $M\times\Pb^k,$
$$\ddc\Lambda_\sigma=p_*(T_f^q\wedge S^r\wedge[C_\sigma]).$$
Notice that Astorg-Bianchi initiated in \cite{astorg-bianchi-skew} the study of bifurcations for skew products of $\Cb^2$ and proved in that setting that $\Lambda_\sigma$ is plurisubharmonic.

Following an idea coming from \cite{astorg-bianchi-skew}, for each $n\geq1$ we can consider the bifurcation current associated to the dynamics of the family $(f_\lambda)_{\lambda\in M}$ on the $n$-periodic fibers. To be more precise, if $\lambda\in M$ and $a\in\Pb^r$ are such that $\theta_\lambda(a)=a$ then we denote by $\Lambda(f^n_{\lambda|L_a})$ the sum of Lyapunov exponents of $f^n_{\lambda|L_a}$ seen as a polynomial endomorphism of $L_a\simeq\Pb^q.$ Then we define
$$\Lambda_{\sigma,n}(\lambda):=\frac{1}{nd^{rn}}\sum_{\theta_\lambda^n(a)=a}\Lambda(f^n_{\lambda|L_a})\ \ \text{ and }\ \ T_{\bif,n}(f):=\ddc\Lambda_{\sigma,n}.$$
It is easy to see that if all the cycles of the family $(\theta_\lambda)_{\lambda\in M}$ can be followed holomorphically on $M$ then $\Lambda_{\sigma,n}$ is plurisubharmonic. Actually, this holds in general and we can express the current $T_{\bif,\sigma}(f)$ in terms $T_{\bif,n}(f).$
\begin{corollary}\label{cor-bif}
Let $(f_\lambda)_{\lambda\in M},$ $(\theta_\lambda)_{\lambda_\in M}$ and $\pi$ be as in Theorem \ref{th-pham}. For each $n\geq1$ the function $\Lambda_{\sigma,n}$ is plurisubharmonic and
$$T_{\bif,\sigma}(f)=\lim_{n\to\infty}T_{\bif,n}(f).$$
Moreover, if $[\per_{\theta,n}]$ denotes the current of integration on $\{(\lambda,a)\in M\times\Pb^r\,|\, \theta^n_\lambda(a)=a\}$ by taking into account multiplicities, then
$$T_{\bif,n}=\frac{p_*(T_f^q\wedge \Pi^*[\per_{\theta,n}]\wedge[C_\sigma])}{d^{rn}}.$$
\end{corollary}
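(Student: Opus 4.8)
The plan is to run, fiber by fiber, the arguments behind Theorems~\ref{bdjn} and~\ref{th-bj}, to average the resulting formulas over the $n$-periodic fibers of $\theta_\lambda$, and then to let $n\to\infty$. Fix $n\geq1$ and a parameter $\lambda$. For each $a\in\Pb^r$ with $\theta_\lambda^n(a)=a$ the map $R_{\lambda,a}:=f^n_{\lambda|L_a}$ is a regular polynomial endomorphism of $\Cb^q\simeq L_a\setminus I(\pi)$ of degree $d^n$, and its restriction to $I(\pi)\simeq\Pb^{q-1}$ is $(f_{\lambda|I(\pi)})^n$, hence independent of $a$, with Lyapunov sum at infinity $n\Lambda_0(\lambda)$. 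Applying Theorem~\ref{bdjn} to $R_{\lambda,a}$ and summing, $\log d^n=n\log d$ combines with this to give
\[
\Lambda_{\sigma,n}(\lambda)=\frac{N_n}{d^{rn}}\bigl(\log d+\Lambda_0(\lambda)\bigr)+\frac1{nd^{rn}}\sum_{\theta_\lambda^n(a)=a}\bigl\langle T_{R_{\lambda,a}}^{q-1}\wedge[\crit_{R_{\lambda,a}}],\,G_{R_{\lambda,a}}\bigr\rangle,
\]
where $N_n=(d^{(r+1)n}-1)/(d^n-1)$ is the number, constant in $\lambda$, of $n$-periodic points of $\theta_\lambda$ counted with multiplicity, so $N_n/d^{rn}\to1$.

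For plurisubharmonicity I would first work on the Zariski-open set $M_0\subset M$ over which, locally, every $n$-periodic point of $\theta_\lambda$ is simple and moves holomorphically: on such a chart each branch $\lambda\mapsto a(\lambda)$ gives a holomorphic family $\lambda\mapsto f^n_{\lambda|L_{a(\lambda)}}$ of degree-$d^n$ endomorphisms of $\Pb^q$, whose Lyapunov sum is p.s.h.\ by Bassanelli--Berteloot; a finite sum of p.s.h.\ functions being p.s.h., $\Lambda_{\sigma,n}$ is p.s.h.\ on $M_0$. Since $(\lambda,a)\mapsto\Lambda(f^n_{\lambda|L_a})$ is continuous and the $n$-periodic points of $\theta_\lambda$ vary continuously with multiplicity, $\Lambda_{\sigma,n}$ is continuous, in particular locally bounded, and therefore extends as a p.s.h.\ function across the proper analytic set $M\setminus M_0$.

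Next I would compute $\ddc\Lambda_{\sigma,n}$. On $M_0$, Bassanelli--Berteloot applied to $\lambda\mapsto f^n_{\lambda|L_{a(\lambda)}}$ expresses $\ddc[\lambda\mapsto\Lambda(f^n_{\lambda|L_{a(\lambda)}})]$ as the fiber integral of the $q$-th power of its family Green current against its family critical divisor. By $T_{f^n}=T_f$ and the fact that the Green current of a fibered map restricts to the fiberwise Green currents, the first factor is the restriction of $T_f$ to the family of fibers $L_{a(\lambda)}$; by Lemma~\ref{le-crit} applied to $f^n$ one has $[\crit_{f^n}]=\sum_{j=0}^{n-1}(f_\lambda^j)^*\bigl([C_\infty]+[C_\sigma]\bigr)$, where the fibered part is a union of fibers and only the sectional part $\sum_{j=0}^{n-1}(f_\lambda^j)^*[C_\sigma]$ restricts to a divisor on a fiber, which is the second factor. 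Summing over all $n$-periodic points and using that $a\mapsto\theta_\lambda^j(a)$ permutes them (so the $j$-th summand, after summation over $a$, contributes the same as the $j=0$ one), the factor $n$ is absorbed and one obtains
\[
\ddc\Lambda_{\sigma,n}=\frac1{d^{rn}}\,p_*\bigl(T_f^q\wedge\Pi^*[\per_{\theta,n}]\wedge[C_\sigma]\bigr)
\]
first on $M_0$, then on $M$, the right-hand side being a closed positive $(1,1)$-current that puts no mass on the analytic set $M\setminus M_0$; the wedge product on the right is the one already given a meaning in the proofs of Theorems~\ref{th-bj} and~\ref{th-pham}.

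Finally, for $T_{\bif,n}(f)\to T_{\bif,\sigma}(f)$: all functions in play being p.s.h., it suffices to prove $\Lambda_{\sigma,n}\to\Lambda_\sigma$ in $L^1_{\mathrm{loc}}(M)$, and for that pointwise convergence with a local uniform bound is enough. Fixing $\lambda$, equidistribution of periodic points of $\theta_\lambda$ yields $d^{-rn}\sum_{\theta_\lambda^n(a)=a}\delta_a\to\mu_{\theta_\lambda}$, hence $d^{-rn}\,[\per_{\theta,n}]\to T_\theta^r$ slicewise and $\mu_{\theta_\lambda}$-almost every fiber avoids the relevant singular loci (Section~\ref{sec-basics}). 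Feeding this into the displayed formula for $\Lambda_{\sigma,n}$ of the first paragraph — identifying $T_{R_{\lambda,a}}^{q-1}$, $[\crit_{R_{\lambda,a}}]$, $G_{R_{\lambda,a}}$ with the restrictions to $L_a$ of $T_{f_\lambda}^{q-1}$, $[C_\sigma]$, the relative Green function, using $N_n/d^{rn}\to1$ and continuity of the potentials — one recovers the right-hand side of Theorem~\ref{th-bj} for $f_\lambda$, i.e.\ $\Lambda_\sigma(\lambda)$, so $T_{\bif,n}(f)\to\ddc\Lambda_\sigma=T_{\bif,\sigma}(f)$ by Theorem~\ref{th-pham}. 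I expect the main obstacle to be exactly the one the authors flag: $S^r$, $[C_\sigma]$ and $[\per_{\theta,n}]$ being singular, the crux is to justify these wedge products and their limits — ruling out mass concentration on $I(\pi)$, on $\crit_\theta$, and on the parameter collision locus — and to make the slicewise equidistribution $d^{-rn}\Pi^*[\per_{\theta,n}]\to S^r$ locally uniform in $\lambda$, on top of the technical work already carried out for Theorems~\ref{th-bj} and~\ref{th-pham}.
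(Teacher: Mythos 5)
Your proposal follows essentially the same route as the paper: fiberwise Bedford--Jonsson combined with Bassanelli--Berteloot applied to the holomorphically moving $n$-periodic fibers over the locus where the cycles of $\theta_\lambda$ do not collide, then extension of the identity $\ddc\Lambda_{\sigma,n}=d^{-rn}p_*(T_f^q\wedge\Pi^*[\per_{\theta,n}]\wedge[C_\sigma])$ across the ramification locus, and finally $L^1_{loc}$ convergence of the plurisubharmonic functions $\Lambda_{\sigma,n}$ deduced from the pointwise limit $\Lambda_{\sigma,n}(\lambda)\to\Lambda_\sigma(\lambda)$ (which is exactly Lemma \ref{le-lyap-approx}) together with the uniform local bounds. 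The one step you assert rather than prove --- that neither side of the identity charges the collision locus --- is precisely where the paper works: for the right-hand side it uses a dimension count ($\supp(\Pi^*[\per_{\theta,n}]\wedge[C_\sigma])$ meets the fibers of $p$ in dimension $q-1$, so its intersection with the preimage of the collision locus has dimension at most $\dim M+q-2$, and $T_f$ has continuous local potentials), and for the left-hand side the uniform lower bound $\Lambda_{\sigma,n}\geq(q\log d)/2$, rather than the continuity of $(\lambda,a)\mapsto\Lambda(f^n_{\lambda|L_a})$ that you invoke without justification.
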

The first part of this result was obtained in \cite[Corollary 4.8]{astorg-bianchi-skew} in the special case of skew products of $\Cb^2$ under the hypothesis that $\ddc\Lambda_\theta=0.$ And, as observed by Astorg-Bianchi, a consequence of Corollary \ref{cor-bif} is that if the dynamics on $(f_\lambda)_{\lambda\in M}$ bifurcates on one periodic fiber then, asymptotically when $n\to\infty,$ it bifurcates on a positive proportion of the $n$-periodic fibers.

\paragraph{Final remarks and outline of the paper --}To conclude this introduction, let us explain in which setting results similar to Theorem \ref{th-mu} and Theorem \ref{th-lyap} could be obtained. First, observe that the assumption that the base space is $\Pb^r$ is unnecessary as long as $\dim(I(\pi))\leq q-1.$ Indeed, if $\pi$ is a dominant meromorphic map between $\Pb^k$ and a compact complex manifold $X$ of dimension $r$ with $\dim(I(\pi))\leq q-1,$ ($q=k-r$), then the restriction of $\pi$ to a generic linear subspace of dimension $r$ in $\Pb^k$ gives a surjective holomorphic map from $\Pb^r$ to $X.$ Then by results in \cite[Section 2 \& 3]{demailly-hwang-peternell}, $X$ is projective and then by \cite{lazarsfeld-some}, $X$ is isomorphic to $\Pb^r.$ Observe that this argument uses the smoothness of the base. The case with a singular base might appear naturally but goes beyond the scope of this paper.

Another natural setting is the following. Assume that $f$ is an endomorphism of $\Pb^k$ which preserves a family $(L_a)_{a\in X}$ of algebraic sets of dimension $q$ and of degree $\alpha$ parametrized by a complex manifold $X$ of dimension $r,$ i.e. there exists an endomorphism $\theta$ of $X$ such that $f(L_a)=L_{\theta(a)}.$ It is natural to expect that under some assumptions on the family $(L_a)_{a\in X}$ and if $\theta$ possesses an equilibrium measure $\mu_\theta,$ the measure $\mu_f$ can be written as $\mu_f=T^q_f\wedge S^r$ where
$$S^r:=\int_{X}\frac{[L_a]}{\alpha}d\mu_\theta(a).$$
Indeed, it is easy to check, using the classifications in \cite{dabija-jonsson-web} and \cite{favre-pereira-web} and the proof of Theorem \ref{th-mu}, that this is the case when $k=2$ and the family $(L_a)_{a\in X}$ defines a web with algebraic leaves. However, in some of these examples $S=T_f.$ This holds for (ii)-(iv) in \cite[Theorem A]{dabija-jonsson-web} and for (ii) in \cite[Theorem E]{favre-pereira-web}. Otherwise, $S\neq T_f$ in these theorems.

Finally, let us mention that results of Dinh-Nguy\^en-Truong \cite{dinh-nguyen-truong-fibration}, \cite{dinh-nguyen-truong-domi} suggest that one might expect some of the results above (as $\pi_*\mu_f=\mu_\theta$ and Theorem \ref{th-lyap}) to extend to the case of dominant meromorphic self-maps of compact K\"{a}hler manifolds with large (or dominant) topological degree which preserve meromorphic fibrations.

The paper is organized as follows. In Section \ref{sec-basics}, we give some technical results on the pull-back and the intersection of currents. In Section \ref{sec-struc} and Section \ref{sec-lyap}, we prove Theorem \ref{th-mu}, Corollary \ref{cor-abso} and Theorem \ref{th-lyap} respectively. In Section \ref{sec-bj}, we restrict ourselves to the cases where $\pi$ is the standard linear fibration and we establish our generalized Bedford-Jonsson formula in that context. Section \ref{sec-bif} is devoted to bifurcations of such maps.

\paragraph{Acknowledgements --} The authors would like to thank Charles Favre for useful comments on the preliminary version of this paper.

\section{Basics on pluripotential theory}\label{sec-basics}
In Section \ref{sec-bj} and Section \ref{sec-bif}, we will intersect currents supported by fibers of a rational map $\pi\colon\Pb^k\dashrightarrow\Pb^r$ and singular currents or functions. Moreover, we will need that the result depends continuously on the fiber. It seems complicated to obtain these statements for an arbitrary dominant rational map $\pi.$ The aim of the first part of this section is to give two results in that direction when $\pi$ is the standard linear fibration. In a second part, we explain how to define the pull-back $\pi^*\tau$ of a positive closed $(1,1)$-current $\tau$ on $\Pb^r$ by a rational map and how to define its self-intersections $(\pi^*\tau)^j$ in the setting of Theorem \ref{th-mu}.

\subsection{Continuous families of currents}
Let $\pi\colon\Pb^k\dashrightarrow\Pb^r$ be the standard linear fibration defined by $\pi[y:z]=[y]$ where $y:=(y_0,\ldots,y_r)\in\Cb^{r+1}$ and $z=(z_0,\ldots,z_{q-1})\in\Cb^q.$ We recall that the indeterminacy set $I(\pi)$ of $\pi$ corresponds to $\{y=0\}\simeq \Pb^{q-1}$ and each fiber $L_a:=\overline{\pi^{-1}(a)}$ is a projective space in which $I(\pi)$ can be identified with the hyperplane at infinity, i.e. $L_a\setminus I(\pi)\simeq \Cb^q.$

In the following two results, we consider an integer $0\leq l\leq k$ and a family $(R_a)_{a\in\Pb^r}$ of positive closed $(k-l,k-l)$-currents in $\Pb^k$ such that $a\mapsto R_a$ is continuous. We also consider an open set $U\subset\Pb^k$ and an upper semicontinuous function $v\colon U\to[-\infty,0]$ such that $\ddc v=T_1-T_2,$ where $T_1$ and $T_2$ are two positive closed $(1,1)$-currents where $T_2$ has continuous local potentials.
\begin{lemma}\label{le-conti}
Assume there exist two analytic subsets $X,Y\subset\Pb^k$ such that $v$ is continuous on $U\setminus X$ and such that for all $a\in\Pb^r$ we have $\supp(R_a)\subset L_a\cap Y$ and $\dim(L_a\cap X\cap Y)\leq l-1.$ Then, for all $a\in\Pb^r$ the current $vR_a$ is well-defined on $U$ and depends continuously on $a.$
\end{lemma}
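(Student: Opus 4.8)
The plan is to reduce the statement to the standard theory of intersection of a positive closed current with a current of the form $\ddc v$ (Bedford--Taylor, Demailly), working locally near a point of $L_a \cap X$ and using the dimension hypothesis $\dim(L_a \cap X \cap Y) \le l-1$ to control the polar set. First I would recall why $v R_a$ makes sense: since $R_a$ is a positive closed $(k-l,k-l)$-current supported in $Y$ and $v$ is a difference of a psh (locally) function and a continuous psh function, $v$ is locally integrable with respect to the trace measure of $R_a$ precisely when the polar locus $\{v = -\infty\} \subset X$ meets $\supp(R_a) \subset L_a \cap Y$ in a set of dimension $\le l-1$; this is guaranteed by the hypothesis $\dim(L_a \cap X \cap Y) \le l-1$, because a positive closed $(k-l,k-l)$-current puts no mass on analytic sets of dimension $< l$. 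So $v R_a$ is a well-defined current of order $0$ on $U$, and writing $v = u_1 - u_2$ with $u_2$ a local continuous potential of $T_2$ and $u_1$ a local potential of $T_1$, the term $u_2 R_a$ is continuous in $a$ by continuity of $a \mapsto R_a$ and of $u_2$; the substance is in the term $u_1 R_a$ with $u_1$ merely psh.

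The key step is therefore to prove: if $a_n \to a$ in $\Pb^r$, then $v R_{a_n} \to v R_a$ weakly on $U$. Fix a test form $\phi$; I would work in a finite cover of $U$ by charts and by a partition of unity reduce to the local claim near a point $x_0 \in U$. On $U \setminus X$ the function $v$ is continuous, so by continuity of $a\mapsto R_a$ the pairing $\langle v R_{a_n}, \phi\rangle$ converges to $\langle v R_a,\phi\rangle$ for test forms supported away from $X$. Near $X$ the argument I would run is a cutoff/truncation argument: let $\chi_\epsilon$ be a sequence of cutoff functions equal to $1$ near $X$, supported in an $\epsilon$-neighborhood of $X$, with $0\le \chi_\epsilon\le 1$; decompose $v R_{a_n} = \chi_\epsilon v R_{a_n} + (1-\chi_\epsilon) v R_{a_n}$. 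For the second piece we are away from $X$ and may pass to the limit in $n$. For the first piece I need a uniform (in $n$) estimate of the mass of $\chi_\epsilon v R_{a_n}$ tending to $0$ with $\epsilon$. This is where the dimension hypothesis does the real work: the trace measure of $R_{a_n}$ of an $\epsilon$-neighborhood of $X$ inside $\supp(R_{a_n}) \subset L_{a_n}\cap Y$ is controlled because $L_{a_n} \cap X \cap Y$ has dimension $\le l-1$ — here one needs a uniform-in-$n$ version of ``$R_{a_n}$ gives small mass to a neighborhood of the low-dimensional set $L_{a_n}\cap X\cap Y$'', which follows from the continuity of $a\mapsto R_a$ together with upper semicontinuity of the mass and a Lelong-number/slicing argument. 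Combined with local boundedness of $u_1$ from above and an integrability bound of $\log$-type for $u_1$ against the trace measures (uniform because the potentials of $T_1$ and the masses of $R_{a_n}$ are locally uniformly controlled), this yields the desired smallness.

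Concretely I would organize the estimate as follows. Near $x_0$ choose coordinates so that $X \subset \{h = 0\}$ for a holomorphic (or real-analytic) function $h$, so the $\epsilon$-neighborhood of $X$ is $\{|h| < \epsilon\}$, and bound
$$\left| \langle \chi_\epsilon v R_{a_n}, \phi \rangle \right| \le C \int_{\{|h|<\epsilon\}} |v| \, \sigma_{R_{a_n}},$$
where $\sigma_{R_{a_n}}$ is the trace measure. Split $|v| \le |u_1| + \|u_2\|_\infty$; the $u_2$ part is $\le C' \sigma_{R_{a_n}}(\{|h|<\epsilon\})$ which $\to 0$ uniformly in $n$ by the mass-continuity and the dimension bound; for the $u_1$ part use that $u_1$ is bounded above and $\int_K (-u_1)\,\sigma_{R_{a_n}}$ is uniformly bounded (again by continuity of $a \mapsto R_a$ and the standard local integrability of psh functions against trace measures when the polar set has too-small dimension), so by uniform integrability the integral over $\{|h|<\epsilon\}$ tends to $0$ uniformly in $n$ as $\epsilon\to0$. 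Letting first $n\to\infty$ on the cutoff piece and then $\epsilon\to 0$ gives $\langle v R_{a_n},\phi\rangle \to \langle v R_a,\phi\rangle$.

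The main obstacle I anticipate is the \emph{uniformity in $n$} of the estimates near $X$: continuity of $a\mapsto R_a$ gives convergence of masses and weak convergence, but I must upgrade this to a uniform control of the trace-measure mass of small neighborhoods of the moving analytic set $L_{a_n}\cap X\cap Y$, and to a uniform upper bound for $\int (-u_1)\,\sigma_{R_{a_n}}$. Both should follow from compactness of $\Pb^r$ (so $a_n$ ranges in a compact set), the hypothesis that the relevant intersection stays of dimension $\le l-1$ along the family (which prevents the trace mass from concentrating), and the fact that $T_2$ — hence the negative part of $\ddc v$ — has continuous potentials, so $v$ is genuinely quasi-psh with a fixed modulus. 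I would isolate this uniformity as the technical heart of the argument, proving it by a slicing argument over the $\Pb^r$-direction combined with the Siu/Lelong semicontinuity of Lelong numbers, and treat the rest as a routine Bedford--Taylor continuity-under-truncation argument.
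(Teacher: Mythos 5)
Your reduction of the continuity statement to a uniform-in-$a$ control of $\int_{\{|h|<\epsilon\}}(-u_1)\,d\sigma_{R_{a_n}}$ correctly locates the difficulty, but you do not supply that control, and the tools you invoke do not give it. Weak continuity of $a\mapsto R_a$ yields convergence of the masses of the $R_{a_n}$ themselves, not uniform integrability of the unbounded function $u_1$ against the trace measures $\sigma_{R_{a_n}}$; and the fact that a positive closed $(k-l,k-l)$-current charges no analytic set of dimension $\le l-1$ does not by itself imply that $u_1$ is $\sigma_{R_a}$-integrable (a measure can give zero mass to the polar set while still integrating $-u_1$ to $+\infty$), so even your well-definedness step is incomplete as written. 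The bound you defer, namely that $\int_K(-u_1)\,d\sigma_{R_{a_n}}$ is uniformly bounded, is essentially equivalent to the local uniform mass bound on $vR_{a_n}$ that the whole lemma hinges on; asserting it ``by continuity of $a\mapsto R_a$ and standard local integrability'' is circular. A Chern--Levine--Nirenberg type estimate does not apply because $u_1$ is not locally bounded near $X$, and semicontinuity of Lelong numbers controls the currents $R_{a_n}$, not their pairing with $u_1$.

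The paper closes exactly this gap with the Oka inequality of Forn{\ae}ss--Sibony. Since $\dim(L_a\cap X\cap Y)\le l-1$, one chooses a $(k-l,l)$ Hartogs figure $H$ disjoint from $L_a\cap X\cap Y$ whose hull $\widehat H$ is a neighborhood of the given point, and by continuity of $a\mapsto L_a$ the same $H$ avoids $L_{a'}\cap X\cap Y$ for all $a'$ near $a$. On $H$ the supports $\supp(R_{a'})\subset L_{a'}\cap Y$ then avoid $X$, so $v$ is continuous there and $\|vR_{a'}\|_H$ is uniformly bounded; the Oka inequality $\|vR_{a'}\|_K\le C\|vR_{a'}\|_H$ transfers this to compacts of $\widehat H$. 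This is the step your slicing/Lelong-number argument would have to replace, and it is also what legitimizes the well-definedness of $vR_a$ via \cite[Proposition 3.1]{fs-oka}. For the identification of the limit the paper also proceeds differently: it extracts a cluster value $R'$ of $vR_{a_n}$, shows $R'=vR_{a_0}$ off the $(\le l-1)$-dimensional set $L_{a_0}\cap X\cap Y$, and concludes with Bassanelli's support theorem for currents $T$ with $T$ and $\ddc T$ of order $0$, rather than by a cutoff; your truncation scheme could work in principle, but only after the uniform estimate is actually established.
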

\begin{proof}
The facts that $vR_a$ is well-defined and that its mass is locally uniformly bounded with respect to $a$ follow easily from the Oka inequality obtained by Forn{\ae}ss-Sibony \cite{fs-oka}. In order to give some details, we freely use the terminology coming from \cite{fs-oka}. Let $a\in\Pb^r$ and $x\in U.$ Since $\dim(L_a\cap X\cap Y)\leq l-1,$ there exists an $(k-l,l)$ Hartogs figure $H$ disjoint from $L_a\cap X\cap Y$ such that its hull $\widehat H$ is a neighborhood of $x$ in $U.$ By continuity of $a\mapsto L_a$, there exists a neighborhood $V$ of $a$ such that $L_{a'}\cap X\cap Y\cap H=\varnothing$ for all $a'\in V.$ Since $v\leq 0$ and $\ddc v=T_1-T_2$ where $T_2$ has continuous local potential, up to a continuous function $v$ is equal to a plurisubharmonic function on $\widehat H$ and \cite[Proposition 3.1]{fs-oka} implies that $vR_{a'}$ is well-defined for all $a'\in V.$ Moreover, again possibly by exchanging $v$ by $v+\phi$ with $\phi$ continuous, we can assume that $vR_{a'}\leq 0$ and $\ddc(vR_{a'})\geq0$ on $\widehat H.$ Hence, we can apply the Oka inequality \cite[Theorem 2.4]{fs-oka}. If $K$ is a compact set contained in the interior of $\widehat H$ then there exists a constant $C>0$  such that for all $a'\in V$
$$\|vR_{a'}\|_K\leq C\|vR_{a'}\|_H.$$
Since $a\mapsto R_a$ is continuous and $v$ is continuous on $\cup_{a'\in V}\supp(R_{a'})\cap H$, we obtain that the mass of $vR_{a'}$ is uniformly bounded on $K$ for $a'\in V$ and we conclude using the compactness of $\Pb^r.$

To prove the continuity of $a\mapsto vR_a,$ let $(a_n)_{n\geq1}$ be a sequence in $\Pb^r$ converging to $a_0.$ Since $vR_a$ has locally uniformly bounded mass, we can assume that $vR_{a_n}$ converges to a current $R'.$ We must have
$$\supp R'\subset\limsup_{n\to\infty}(\supp(R_{a_n}))\subset\limsup_{n\to\infty}(L_{a_n}\cap Y)\subset L_a\cap Y.$$
On the other hand, by continuity of $a\mapsto R_a$ and since $v$ is continuous on $U\setminus X,$ we have that $R'=vR_{a_0}$ outside $L_a\cap Y\cap X$ which has dimension smaller than or equal to $l-1.$ Hence,  the support theorem of Bassanelli \cite{bassanelli-cut-off} for currents $T$ such that $T$ and $\ddc T$ have order $0$ implies that $R'=vR_{a_0}$ on $U.$
\end{proof}
\begin{lemma}\label{le-int}
Let $(\nu_n)_{n\geq1}$ a sequence of probabilities in $\Pb^r$ which converges to $\nu.$ Let us define $R:=\int R_a d\nu$ and $R_n:=\int R_a d\nu_n.$ If $v R_a$ is well defined on $U$ and $a\mapsto vR_a$ is continuous then $vR_n$ and $vR$ are well-defined on $U$ and satisfy $vR_n=\int vR_ad\nu_n,$ $vR=\int vR_ad\nu$ and $\lim_{n\to\infty}vR_n=vR.$
\end{lemma}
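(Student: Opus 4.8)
The plan is to reduce everything to the local situation and then invoke the continuity statement from Lemma \ref{le-conti} together with a dominated-convergence argument on the parameter space $\Pb^r$. First I would note that all the assertions are local in $U$ and linear in the currents, so it suffices to test against a fixed smooth compactly supported form $\varphi$ on a small open set $W\Subset U$, and to check that the mass of $vR_a$ on $W$ is locally uniformly bounded in $a$ — but this is exactly what was established in the proof of Lemma \ref{le-conti} (the Oka-inequality bound $\|vR_{a'}\|_K\leq C\|vR_{a'}\|_H$ on a neighborhood $V$ of each $a$, combined with compactness of $\Pb^r$). Hence there is a uniform constant $C_W$ with $\|vR_a\|_W\leq C_W$ for all $a$.

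Next I would define $vR_n$ and $vR$ directly by the formulas $vR_n:=\int vR_a\,d\nu_n(a)$ and $vR:=\int vR_a\,d\nu(a)$; these integrals make sense because $a\mapsto vR_a$ is continuous (hence Borel measurable) with locally uniformly bounded mass, so for each test form $\varphi$ the function $a\mapsto\langle vR_a,\varphi\rangle$ is continuous and bounded on the compact space $\Pb^r$. It remains to check two things: that these currents genuinely equal $v$ times $R_n$ (resp. $R$) in the sense of the Bedford--Taylor / Fornæss--Sibony pluripotential product, and that $vR_n\to vR$. For the latter, since $a\mapsto\langle vR_a,\varphi\rangle$ is continuous and bounded and $\nu_n\to\nu$ weakly, we get $\langle vR_n,\varphi\rangle=\int\langle vR_a,\varphi\rangle\,d\nu_n\to\int\langle vR_a,\varphi\rangle\,d\nu=\langle vR,\varphi\rangle$, which is the desired convergence.

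The one genuinely nontrivial point is identifying $\int vR_a\,d\nu_n$ with the pluripotential product of $v$ and $R_n=\int R_a\,d\nu_n$ (and similarly for $\nu$). Working locally on a Hartogs hull $\widehat H$ as in Lemma \ref{le-conti}, after adding a continuous function we may assume $v$ is a negative plurisubharmonic function there and each $R_a$ (hence $R_n$) is negative with $\ddc R_a\geq 0$; the product $vR_n$ is then defined as the weak limit of $v_\varepsilon R_n$ for a decreasing sequence of smooth psh $v_\varepsilon\downarrow v$. Since $v_\varepsilon$ is smooth, $v_\varepsilon R_n=v_\varepsilon\int R_a\,d\nu_n=\int v_\varepsilon R_a\,d\nu_n$ by linearity and Fubini; letting $\varepsilon\to 0$ and using monotone/dominated convergence on both the $\varepsilon$-limit (Bedford--Taylor continuity along decreasing sequences) and the $a$-integral (uniform mass bound), one concludes $vR_n=\int vR_a\,d\nu_n$. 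The interchange of the decreasing limit in $\varepsilon$ with the integral over $a$ against $\nu_n$ is the main obstacle, and I expect it to be handled by the uniform mass estimate plus the fact that $\langle v_\varepsilon R_a,\varphi\rangle$ decreases to $\langle vR_a,\varphi\rangle$ for $\varphi\geq 0$, so that one is in the scope of a Dini-type or dominated-convergence argument. Once the identification is in place, the convergence $vR_n\to vR$ and the equalities $vR=\int vR_a\,d\nu$, $vR_n=\int vR_a\,d\nu_n$ are immediate from the weak convergence $\nu_n\to\nu$ and the continuity of $a\mapsto vR_a$ established previously.
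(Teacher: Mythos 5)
Your proposal is correct and follows essentially the same route as the paper: reduce to a local test form, regularize $v$ by a decreasing sequence of continuous (or smooth) psh functions modulo a continuous one, use linearity/Fubini for the regularized functions, and justify the interchange of the decreasing limit with the integral over $a$ via a Dini/monotone-convergence argument resting on the continuity of $a\mapsto vR_a$ — which is precisely how the paper proves the identities $vR=\int vR_a\,d\nu$, $vR_n=\int vR_a\,d\nu_n$, the final convergence then following from the weak convergence $\nu_n\to\nu$.
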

\begin{proof}
Let $\phi$ be a $(l,l)$ smooth form with compact support in $U.$ We can assume that the support of $\phi$ is contained in a small ball $B \subset U$ on which $v=u_1-u_2$ where $u_1,$ $u_2$ are plurisubharmonic on $B$ and $u_2$ is continuous. Hence, there exists a decreasing sequence $(u_{1,j})_{j\geq1}$ of continuous plurisubharmonic functions on $B$ converging pointwise to $u_1.$ Define $v_j:=u_{1,j}-u_2.$ Since $vR_a$ is well defined then $\langle v_jR_a,\phi\rangle$ decreases to $\langle vR_a,\phi\rangle$ by \cite[Corollary 3.3]{fs-oka}. In particular, if we define $\psi_j(a):=\langle v_jR_a,\phi\rangle$ and $\psi(a):=\langle vR_a,\phi\rangle$ then $\psi_j$ decreases pointwise to $\psi$ which is a continuous function since $a\mapsto vR_a$ is continuous. Hence, by Dini's theorem $\psi_j$ converges uniformly to $\psi.$

On the other hand, by monotone convergence theorem $\langle v_jR,\phi\rangle$ decreases to $\langle vR,\phi\rangle$ which is potentially equal to $-\infty.$ But, by definition of $R$ we have
$$\lim_{j\to\infty}\langle v_jR,\phi\rangle=\lim_{j\to\infty}\int_{\Pb^r}\langle v_jR_a,\phi\rangle d\nu(a)=\lim_{j\to\infty}\int_{\Pb^r}\psi_jd\nu=\int_{\Pb^r}\psi d\nu=\int_{\Pb^r}\langle vR_a,\phi\rangle d\nu(a),$$
i.e. $vR=\int vR_ad\nu.$ The same holds for $R_n$, and then $\lim_{n\to\infty}vR_n=vR$ since $a\mapsto vR_a$ is continuous.
\end{proof}

\subsection{Pull-back of $(1,1)$-currents by rational maps}
In this subsection, $\pi\colon\Pb^k\dashrightarrow\Pb^r$ is a dominant rational map with $\dim(I(\pi))\leq q-1.$

As $\pi$ is not supposed to be a submersion on $\Pb^k\setminus I(\pi),$ the definition of the pull-back operator $\pi^*$ on currents requires some work. However, we will only consider currents given by wedge products of positive closed $(1,1)$-currents with continuous local potentials, which greatly simplifies the problem. If $\tau$ is a positive closed $(1,1)$-current on $\Pb^r$ which is equal locally to $\ddc u$ then $\pi_{|\Pb^k\setminus I(\pi)}^*\tau$ can be defined locally on $\Pb^k\setminus I(\pi)$ as $\ddc u\circ\pi.$ M\'eo \cite{meo} proved that $\tau\mapsto \pi_{|\Pb^k\setminus I(\pi)}^*\tau$ is continuous. Moreover, since $I(\pi)$ has codimension at least $2,$ the trivial extension of $\pi_{|\Pb^k\setminus I(\pi)}^*\tau$ to $\Pb^k$ is again  a positive closed $(1,1)$-current that we denote by $\pi^*\tau.$ We summarize in the following proposition the properties about pull-back we shall need in the sequel. Recall that if $R_1$ and $R_2$ are two positive closed currents on $\Pb^k$ of bidegree $(1,1)$ and $(j,j)$ respectively then the wedge product $R_1\wedge R_2$ is well-defined if the local potentials of $R_1$ are integrable with respect to $R_2\wedge\omega_{\Pb^k}^{k-j}$ (see e.g. \cite{bedford-taylor-capa}).
\begin{proposition}\label{prop-pull-back}
Let $\pi\colon\Pb^k\dashrightarrow\Pb^r$ be a dominant rational map whose indeterminacy set has a dimension smaller than or equal to $q-1.$ If $\tau$ is a positive closed $(1,1)$-current of mass $1$ on $\Pb^r$ then $\|\pi^*\tau\|$ is equal to the algebraic degree $\deg(\pi)$ of $\pi.$ If $\tau$ has continuous local potentials then the self-intersections $(\pi^*\tau)^j$ are well-defined for $j\in\{1,\ldots,r\}.$ The currents $(\pi^*\tau)^j$ coincide with the trivial extension of the standard pull-back of $\tau^j$ if $\tau$ is smooth. Moreover, if $(u_n)$ is a sequence of continuous functions which converges uniformly to $0$ then the currents $\tau_n:=\tau+\ddc u_n$ satisfy $\lim_{n\to\infty}(\pi^*\tau_n)^j=(\pi^*\tau)^j.$
\end{proposition}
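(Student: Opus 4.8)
The plan is to reduce every assertion to the case of a holomorphic map by resolving the indeterminacy of $\pi$. By Hironaka's theorem, I fix a modification $\sigma\colon\hX\to\Pb^k$, a composition of blow-ups with smooth centres lying over $I(\pi)$, such that $\hat\pi:=\pi\circ\sigma\colon\hX\to\Pb^r$ is holomorphic; then $\sigma$ restricts to a biholomorphism above $\Pb^k\setminus I(\pi)$, and I use throughout that $\dim I(\pi)\le q-1=k-r-1$. Since $\hat\pi$ is a morphism and $\tau$ has continuous local potentials, $\hat\pi^*\tau$ is a positive closed $(1,1)$-current with continuous local potentials, so its Bedford--Taylor self-intersections exist, are positive and closed, and satisfy $(\hat\pi^*\tau)^j=\hat\pi^*(\tau^j)$ for $j\le r$. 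I first observe that $\pi^*\tau=\sigma_*(\hat\pi^*\tau)$: both sides are positive closed $(1,1)$-currents agreeing on $\Pb^k\setminus I(\pi)$, so their difference is a positive closed current supported on $I(\pi)$, which has codimension at least $2$, hence vanishes (cf. \cite{bassanelli-cut-off}). This also settles the mass: the class of $\pi^*\tau=\sigma_*\hat\pi^*\tau$ in $H^{1,1}(\Pb^k)$ equals $\sigma_*\hat\pi^*\{\tau\}$; as $\tau$ has mass $1$, $\{\tau\}$ is the hyperplane class of $\Pb^r$, and $\sigma_*\hat\pi^*$ sends it to the class of $\overline{\pi^{-1}(H)}$ for generic $H$, a hypersurface of degree $\deg(\pi)$, so $\|\pi^*\tau\|=\deg(\pi)$.

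For the self-intersections, I argue by induction on $j\in\{1,\dots,r\}$ that the Bedford--Taylor power $(\pi^*\tau)^j:=\pi^*\tau\wedge(\pi^*\tau)^{j-1}$ is well defined, positive, closed, and equal to $\sigma_*\hat\pi^*(\tau^j)$; the case $j=1$ is the previous paragraph. Write $\pi^*\tau=\deg(\pi)\,\omega_{\Pb^k}+\ddc\varphi$; from the description of $\pi^*$ on $\Pb^k\setminus I(\pi)$, the quasi-psh function $\varphi$ is locally bounded there. By the inductive hypothesis $(\pi^*\tau)^{j-1}$ has bidimension $k-j+1$, and the intersection of its support with $I(\pi)$ has dimension $\le q-1=k-r-1<k-j+1$ (here $j\le r+1$); hence, by the Bedford--Taylor theory (see \cite{bedford-taylor-capa}, and \cite{fs-oka} for the version permitting unboundedness along a thin analytic set), the current $\pi^*\tau\wedge(\pi^*\tau)^{j-1}$ is well defined, positive and closed. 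It agrees with $\sigma_*\hat\pi^*(\tau^j)$ on $\Pb^k\setminus I(\pi)$, where both reduce to the classical wedge of currents with continuous potentials, so their difference is a closed $(j,j)$-current of order $0$ — hence flat — supported on $I(\pi)$, whose real dimension $2(k-r-1)$ is strictly less than $2(k-j)$ when $j\le r$; it therefore vanishes by Federer's support theorem, completing the induction. When $\tau$ is smooth, $\tau^j$ is smooth, so $\hat\pi^*(\tau^j)$ is a smooth form and $(\pi^*\tau)^j=\sigma_*\hat\pi^*(\tau^j)$ restricts on $\Pb^k\setminus I(\pi)$ to the ordinary pull-back of $\tau^j$; the same Federer argument identifies it with the trivial extension of that pull-back.

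Finally, for the continuity statement I lift the perturbation to $\hX$: there $\hat\pi^*\tau_n=\hat\pi^*\tau+\ddc(u_n\circ\hat\pi)$, with $u_n\circ\hat\pi$ continuous and $\|u_n\circ\hat\pi\|_\infty\le\|u_n\|_\infty\to0$. Combined with $\tau_n^j\to\tau^j$ on $\Pb^r$, the continuity of the Monge--Ampère operator under uniform convergence of continuous potentials gives $(\hat\pi^*\tau_n)^j=\hat\pi^*(\tau_n^j)\to(\hat\pi^*\tau)^j$ weakly on $\hX$; pushing forward by the proper, weakly continuous operator $\sigma_*$ yields $(\pi^*\tau_n)^j\to(\pi^*\tau)^j$. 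The recurring difficulty — and the main obstacle — is that the local potential of $\pi^*\tau$ is in general unbounded along $I(\pi)$, so neither the existence of the self-intersections nor their continuity is accessible by elementary pluripotential theory, and the pull-back of currents is not weakly continuous; passing to a resolution turns $\pi$ into a morphism and removes both obstructions, while the hypothesis $\dim I(\pi)\le q-1$ is precisely what makes the wedging criterion and the support theorem valid up to the power $j=r$.
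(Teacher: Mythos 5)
Your proof is correct, but it takes a genuinely different route from the paper's. The paper never resolves the indeterminacy: it works directly on $\Pb^k$, defining $\pi^*\tau$ as the trivial extension of $\pi_{|\Pb^k\setminus I(\pi)}^*\tau$ across the codimension-$\geq 2$ set $I(\pi)$, controlling masses via the Oka inequality of Forn{\ae}ss--Sibony \cite{fs-oka} together with M\'eo's continuity theorem \cite{meo}; the mass identity $\|\pi^*\tau\|=\deg(\pi)$ is obtained by writing $\tau=\omega_{\Pb^r}+\ddc u$ and checking $\langle\ddc(u\circ\pi),\omega_{\Pb^k}^{k-1}\rangle=0$, and the convergence $(\pi^*\tau_n)^j\to(\pi^*\tau)^j$ is proved by a direct induction with integration by parts on charts of $\Pb^k\setminus I(\pi)$, followed by the support theorem. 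You instead pass to a Hironaka resolution $\sigma\colon\hX\to\Pb^k$, where $\pi\circ\sigma$ is holomorphic, so that all wedge products and their continuity become classical Bedford--Taylor statements for currents with continuous potentials on $\hX$; you then descend by $\sigma_*$ and kill the discrepancy over $I(\pi)$ with Federer's support theorem, and you get the mass purely cohomologically. Both arguments hinge on the same dimension count $\dim I(\pi)\le q-1$, both for the wedging criterion and for the support theorem, and your well-definedness step on $\Pb^k$ itself is identical to the paper's. What your route buys is functoriality and a cleaner mass computation; what it costs is the resolution machinery and repeated descent arguments, whereas the paper's potential-theoretic proof also yields, as a byproduct it uses elsewhere, the continuity of $\tau\mapsto\pi^*\tau$ on all positive closed $(1,1)$-currents and the fact that $(\pi^*\tau)^{r+1}$ is carried by $I(\pi)$. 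One notational caveat: for non-smooth $\tau$ the expression $(\pi\circ\sigma)^*(\tau^j)$ is not canonically defined (the map need not be a submersion), so it should be read throughout as the Bedford--Taylor power $((\pi\circ\sigma)^*\tau)^j$; since that is the only object you actually use, this does not affect the argument.
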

\begin{proof}
Let $\tau$ be a positive closed $(1,1)$-current of mass $1$ on $\Pb^r.$ As we have said, in \cite{meo} M\'eo proved that  $\pi_{|\Pb^k\setminus I(\pi)}^*\tau$ depends continuously on $\tau.$ On the other hand, since $I(\pi)$ has codimension at least $2,$ the trivial extension, denoted by $\pi^*\tau,$ is a positive closed $(1,1)$-current on $\Pb^k.$ Moreover, as in the proof of Lemma \ref{le-conti}, the Oka inequality obtained in \cite{fs-oka} implies that the mass of $\pi^*\tau$ is bounded independently of $\tau.$ Hence, $\tau\mapsto\pi^*\tau$ is also continuous. Indeed, if $(\tau_n)_{n\geq1}$ is a sequence of positive closed $(1,1)$-currents converging to $\tau$ then $(\pi^*\tau_n)_{n\geq1}$ has uniformly bounded mass on $\Pb^k$ and using the continuity of $\pi_{|\Pb^k\setminus I(\pi)}^*$ each limit value $R$ has to be equal to $\pi_{|\Pb^k\setminus I(\pi)}^*\tau$ on $\Pb^k\setminus I(\pi).$ Finally, $R=\pi^*\tau$ since $I(\pi)$ has codimension at least $2.$ To see that $\|\pi^*\tau\|$ is in fact independent of $\tau,$ observe that if $\tau=\omega_{\Pb^r}+\ddc u$ where $u$ is a continuous function, then $u\circ\pi$ is in $L^1(\Pb^k)$ and $\langle\ddc(u\circ\pi),\omega_{\Pb^k}^{k-1}\rangle=0.$ Moreover, $(\pi^*\tau)-(\pi^*\omega_{\Pb^r})=\ddc(u\circ\pi)$ so $\|\pi^*\tau\|=\|\pi^*\omega_{\Pb^r}\|.$ The general case follows by continuity since smooth forms are dense in the space of positive closed $(1,1)$-currents on $\Pb ^r.$ Since $\|\pi^*\tau\|$ is independent of $\tau,$ we obtain that it is equal to $\deg(\pi)$ by taking an hyperplane in $\Pb^r.$

We now assume as in the statement that $\dim(I(\pi))\leq q-1.$ Let $R$ be a positive closed $(j,j)$-current on $\Pb^k$ with $j\in\{1,\ldots,r\}.$  If $\tau$ has continuous local potentials then $\pi^*\tau$ has continuous local potentials except on $I(\pi),$ i.e. the set of points where these local potentials are unbounded is contained in $I(\pi).$ Hence, using the assumption on $\dim(I(\pi))$ we can deduce from \cite{fs-oka} that these local potentials are integrable with respect to $R\wedge\omega_{\Pb^k}^{k-j}$ and thus $(\pi^*\tau)\wedge R$ is well-defined. In particular, $(\pi^*\tau)^j$ is well-defined for $j\in\{1,\ldots,r+1\}.$ The fact that these currents coincide with the trivial extension of $\pi_{|\Pb^k\setminus I(\pi)}^*(\tau^j)$ if $\tau$ is smooth and $j\in\{1,\ldots,r\}$ follows exactly as above. Observe however that for $j=r+1,$ $\pi^*(\tau^{r+1})$ vanishes whereas $(\pi^*\tau)^{r+1}$ has mass $\deg(\pi)^{r+1}$ and thus differs from $0.$ This implies that the support of $(\pi^*\tau)^{r+1}$ is contained in $I(\pi)$ and thus $\dim(I(\pi))\geq q-1$ i.e. $\dim(I(\pi))=q-1.$
 
We prove the last assertion by induction. The case $j=1$ follows from the first part of this proof. Assume the assertion is true for $j-1$ with $j\in\{2,\ldots,r\}.$ Observe that since $(u_n)$ converges uniformly to $0,$ $\Pb^k\setminus I(\pi)$ is covered by open sets $\Omega$ where we can write $\pi^* \tau=\ddc v$ and $\pi^* \tau_n=\ddc v_n$ where $(v_n)$ is a sequence of continuous functions converging uniformly to $v.$ Hence, if $\phi$ is a smooth form with compact support on $\Omega$ then
\begin{align*}
\langle(\pi^*\tau_n)^j-(\pi^*\tau)^j,\phi\rangle&=\langle\ddc(v_n(\pi^*\tau_n)^{j-1}-v(\pi^*\tau)^{j-1}),\phi\rangle
\\&=\langle v((\pi^*\tau_n)^{j-1}-(\pi^*\tau)^{j-1}),\ddc\phi\rangle+\langle(v_n-v)(\pi^*\tau_n)^{j-1},\ddc\phi\rangle.
\end{align*}
The inductive hypothesis implies that the first term in this sum converges to $0.$ The second term also converges to $0$ since $(v_n-v)$ converges uniformly to $0.$ Hence, any limit value of $(\pi^*\tau_n)^j$ has to be equal to $(\pi^*\tau)^j$ on $\Pb^k\setminus I(\pi)$ and this equality extends to $\Pb^k$ since $\dim(I(\pi))\leq q-1.$
\end{proof}
\begin{remark}
The degree $\deg(\pi)$ of $\pi$ is not necessary equal to $1.$ If $f$ preserves the fibration defined by $\pi$ then the fibration defined by $\pi\circ f$ is still preserved by $f$ and if $f$ has degree $d$ then $\deg(\pi\circ f)=d\deg(\pi).$ Another example in our setting is the binomial pencil of $\Pb^2$ considered in \cite{dabija-jonsson-pencil} and \cite{favre-pereira-foliation}  where the degree is an arbitrary integer.
\end{remark}
\begin{remark}
During the above proof, we have shown that $\dim(I(\pi))\geq q-1.$  This fact also follows from the definition of $I(\pi)$ as the common zeros of the $r+1$ homogeneous polynomials defining $\pi.$
\end{remark}

\section{Structure of the Green currents}\label{sec-struc}
This section is devoted to the proofs of Theorem \ref{th-mu}, Corollary \ref{cor-decomp} and Corollary \ref{cor-abso}. 

Let $f$ be an endomorphism of $\Pb^k$ of degree $d\geq2.$
Recall that the \textit{Green current} $T_f$ of $f$ can be defined as
$$T_f=\lim_{n\to\infty}\frac{1}{d^n}f^{n*}\omega_{\Pb^k}.$$
We refer to \cite{ds-lec} for a detailed study of this current. In what follows, we will use that $T_f$ has H\"{o}lder local potentials and if $l\in\{1,\ldots, k\}$ then its self-intersection $T_f^l$ satisfies $T_f^l=\lim_{n\to\infty}d^{-ln}f^{n*}\omega_{\Pb^k}^l.$

\begin{proof}[Proof of Theorem \ref{th-mu}]
First observe that since $I(\pi)\cap \Jc_q(f)=\varnothing,$ a cohomological arguments implies that the dimension of $I(\pi)$ is at most $q-1$ and $\pi$ satisfies the assumption of Proposition \ref{prop-pull-back}. Using this proposition, we define
$$R:=\deg(\pi)^{-1}\pi^*\omega_{\Pb^r}\ \ \text{ and } \ \ S:=\deg(\pi)^{-1}\pi^*T_\theta$$
which are two positive closed $(1,1)$-currents of mass $1.$ We also deduce from $I(\pi)\cap \Jc_q(f)=\varnothing$ that  there exists a neighborhood $U$ of $I(\pi)$ such that $\Jc_q(f)\cap U=\varnothing.$ Since $R$ is a smooth form on $\Pb^k\setminus I(\pi),$ there is a constant $C>0$ such that $R\leq C\omega_{\Pb^k}$ on $\Pb^k\setminus U$ and thus $T_f^q\wedge R^j\leq C^j T_f^q\wedge\omega_{\Pb^k}^j$  on $\Pb^k$ for $1\leq j\leq r.$ Applying the operator $d^{-n(q+j)}f^{n*}$ to this inequality gives
$$T_f^q\wedge\left(\frac{1}{d^{nj}}f^{n*}R^j\right)\leq C^j T_f^q\wedge\left(\frac{1}{d^{nj}}f^{n*}\omega_{\Pb^k}^j\right).$$
The equidistibution results for $f$ and the fact that $T_f$ has continuous local potentials (see \cite{ds-lec}) imply that the right-hand side converges to $C^jT_f^{q+j}.$ On the other hand, as $\pi\circ f^n=\theta^n\circ\pi$ we have by Proposition \ref{prop-pull-back}
$$T_f^q\wedge\left(\frac{1}{d^{nj}}f^{n*}R^j\right)=T_f^q\wedge\left(\frac{1}{\deg(\pi)^jd^{nj}}f^{n*}\pi^*\omega_{\Pb^r}^j\right)=T_f^q\wedge\pi^*\left(\frac{1}{\deg(\pi)^jd^{nj}}\theta^{n*}\omega_{\Pb^r}^j\right).$$
Recall that $d^{-n}\theta^{n*}\omega_{\Pb^r}=T_\theta+\ddc u_n$ where $u_n$ are continuous functions converging uniformly to $0.$ Hence, by Proposition \ref{prop-pull-back} the sequence above converges to $T_f^q\wedge S^j$ and thus $T_f^q\wedge S^j\leq C^j T_f^{q+j}.$ Moreover, $T_f^q\wedge S^j$ is invariant by $d^{-(q+j)}f^*$ and $T_f^{q+j}$ is extremal in the cone of such currents (see \cite{sibony-pano} when $q+j=1$ and \cite{ds-superpot} for the general case) so $T_f^q\wedge S^j=T_f^{q+j}.$

The proof of $S^j\neq T_f^j$ for $j\in\{1,\ldots,r\}$ simply comes from the fact (observed in the proof of Proposition \ref{prop-pull-back}) that $S^{r+1}$ is supported in $I(\pi)$ which has dimension at most $q-1.$ On the other hand, since $T_f$ has H\"{o}lder local potentials, it follows from \cite{sibony-pano} that $T_f^j\wedge S^{r+1-j}$ gives no mass to analytic sets of dimension $q-2+j\geq q-1.$

Finally, in order to prove the last assertion, observe that since $I(\pi)\cap \Jc_q(f)=\varnothing,$ the current $T_f^{q+j}=T_f^q\wedge S^j$ satisfies
\begin{equation}\label{eq-pi}
\pi_*(T_f^q\wedge S^j)=\deg(\pi)^{-j}\pi_*(T_f^q\wedge\pi^*T_\theta^j)=\deg(\pi)^{-j}(\pi_*T_f^q)\wedge T_\theta^j.
\end{equation}
The current $\pi_*T_f^q$ is positive and closed of bidegree $(0,0)$ on $\Pb^r$ thus it is a positive multiple of $[\Pb^r]$. Since $\pi_*$ preserves the mass of measures, the equation \eqref{eq-pi} with $j=r$ implies $\pi_*T_f^q=\deg(\pi)^r[\Pb^r]$ and thus  $\pi_*(T_f^{q+j})=\deg(\pi)^{r-j}T_\theta^j.$ In particular, $\pi_*\mu_f=\mu_\theta.$
\end{proof}

\begin{proof}[Proof of Corollary \ref{cor-decomp}]
We shall use the formula $\mu_f=T_f^q\wedge S^r$. The proof would have been straightforward if $\pi$ were a submersion on $\Pb^k\setminus I(\pi)$ and the current $T_f^q$ were smooth. However, since $T_f$ has continuous local potentials, we can use regularization as follows.

Let $\phi\colon\Pb^k\to\Rb$ be a continuous function. The critical set $\crit_\pi$ of $\pi$ is by definition the union of $I(\pi)$ with the set of points in $\Pb^k\setminus I(\pi)$ where the differential of $\pi$ has rank strictly less than $r.$ This set is algebraic so the measure $\mu_f$ gives no mass to it. Hence, if $\chi_n\colon\Pb^k\to[0,1]$ are smooth functions  with compact support in $\Pb^k\setminus\crit_\pi$ which converge locally uniformly to $1$ on $\Pb^k\setminus\crit_\pi$ then $\langle\mu_f,\phi\rangle=\lim_{n\to\infty}\langle\mu_f,\chi_n\phi\rangle.$ Moreover, for $\mu_\theta$-almost all $a\in\Pb^r$ the algebraic set $L_a:=\overline{\pi^{-1}_{|\Pb^k\setminus I(\pi)}(a)}$ has dimension $q$ and $L_a\cap\crit_\pi$ has dimension $q-1.$ In particular, the current $[L_a]$ has mass $\deg(\pi)^r$ and coincides with the trivial extension of $\pi_{|\Pb^k\setminus\crit_\pi}^*\delta_a.$ This implies that if $\Psi$ is a smooth $(q,q)$-form on $\Pb^k$ then  the function $\pi_*(\chi_n\phi\Psi)$ on $\Pb^r$ is equal $\mu_\theta$-everywhere to $a\mapsto\langle\Psi\wedge[L_a],\chi_n\phi\rangle.$

On the other hand, the current $T_f$ has continuous local potentials so there exists a continuous function $g$ such that $T_f=\omega_{\Pb^k}+\ddc g.$ Let $(g_l)_{l\geq1}$ be a sequence of smooth functions converging uniformly to $g$ and define $T_l:=\omega_{\Pb^k}+\ddc g_l.$ The uniform convergence implies that if $R$ is a positive closed $(r,r)$-current then $T^q_l\wedge R$ converges to $T_f^q\wedge R.$ Hence, $\mu_f=T^q_f\wedge S^r$ implies
\begin{align*}
\langle\mu_f,\phi\rangle&=\lim_{n\to\infty}\langle\mu_f,\chi_n\phi\rangle=\lim_{n\to\infty}\lim_{l\to\infty}\langle T_l^q\wedge S^r,\chi_n\phi\rangle=\lim_{n\to\infty}\lim_{l\to\infty}\langle\mu_\theta,\pi_*(\chi_n\phi T_l^q)\rangle/\deg(\pi)^r\\
&=\lim_{n\to\infty}\lim_{l\to\infty}\int_{\Pb^r}\langle T^q_l\wedge\frac{[L_a]}{\deg(\pi)^r},\chi_n\phi\rangle d\mu_\theta(a)=\lim_{n\to\infty}\int_{\Pb^r}\langle T^q_f\wedge\frac{[L_a]}{\deg(\pi)^r},\chi_n\phi\rangle d\mu_\theta(a)\\
&=\int_{\Pb^r}\langle T^q_f\wedge\frac{[L_a]}{\deg(\pi)^r},\phi\rangle d\mu_\theta(a),
\end{align*}
where the last equality comes from the fact that for $\mu_\theta$-almost all $a,$ $L_a\cap\crit_\pi$ has dimension $q-1$ and $T^q_f\wedge[L_a]$ gives no mass to such sets.
\end{proof}

By the Radon-Nikodym theorem, the following result implies Corollary \ref{cor-abso}.
\begin{corollary}
Under the assumptions of Theorem \ref{th-mu}, if there exists a positive $\omega_{\Pb^r}^r$-integrable function $h$ such that $\mu_\theta=h\omega^r_{\Pb^r}$ then there exists $A>0$ such that $\mu_f\leq A \, (h\circ\pi) \, \sigma_{T_f^q}.$ In particular, $\mu_f<<\sigma_{T_f^q}.$
\end{corollary}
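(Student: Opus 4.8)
The plan is to combine the decomposition $\mu_f = T_f^q \wedge S^r$ from Theorem~\ref{th-mu} with the pointwise comparison $S \leq C\,\omega_{\Pb^k}$ that holds away from $I(\pi)$, and then to transfer the density bound $\mu_\theta = h\,\omega_{\Pb^r}^r$ through $\pi$ by slicing along the fibers $L_a$. First I would use the assumption $I(\pi) \cap \Jc_q(f) = \varnothing$ to fix a neighborhood $U$ of $I(\pi)$ with $\Jc_q(f) \cap U = \varnothing$; since $S = \deg(\pi)^{-1}\pi^*T_\theta$ has continuous local potentials away from $I(\pi)$ (Proposition~\ref{prop-pull-back}), on the compact set $\Pb^k \setminus U$ we get $S \leq C\,\omega_{\Pb^k}$ for some constant $C>0$, hence $S^r \leq C^r \omega_{\Pb^k}^r$ there. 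Because $\supp(T_f^q) = \Jc_q(f)$ is disjoint from $U$, wedging with $T_f^q$ kills everything happening inside $U$, so $\mu_f = T_f^q \wedge S^r \leq C^r\, T_f^q \wedge \omega_{\Pb^k}^r = C^r\, \sigma_{T_f^q}$ as a global inequality of positive measures. This already gives $\mu_f \ll \sigma_{T_f^q}$; the point of the corollary is to identify the density up to a bounded factor as $h \circ \pi$.

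To get the sharper statement involving $h\circ\pi$, I would run the computation of Corollary~\ref{cor-decomp} but carrying the density through. Writing $\mu_\theta = h\,\omega_{\Pb^r}^r$ and using $S^r = \deg(\pi)^{-r}\pi^*T_\theta^r$, the decomposition reads, for continuous $\phi \geq 0$,
$$\langle \mu_f, \phi\rangle = \int_{\Pb^r} \Big\langle T_f^q \wedge \frac{[L_a]}{\deg(\pi)^r}, \phi\Big\rangle\, h(a)\, d\omega_{\Pb^r}^r(a).$$
Now I compare this with $\langle \sigma_{T_f^q}, (h\circ\pi)\phi\rangle = \langle T_f^q \wedge \omega_{\Pb^k}^r, (h\circ\pi)\phi\rangle$. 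The latter also disintegrates along the fibers of $\pi$: since $\pi$ is a submersion onto $\Pb^r$ away from $\crit_\pi$ and $T_f^q$ gives no mass to $\crit_\pi$, one has a coarea-type identity expressing $\langle T_f^q \wedge \omega_{\Pb^k}^r, \psi\rangle$ as an integral over $\Pb^r$ (against $\omega_{\Pb^r}^r$) of fiber integrals $\langle T_f^q \wedge [L_a]/\deg(\pi)^r, \psi\, \kappa_a\rangle$ for a strictly positive Jacobian-type weight $\kappa_a$ coming from $\omega_{\Pb^k}^r$ versus $\pi^*\omega_{\Pb^r}^r \wedge \omega_{\Pb^k}^{q}$ (this is essentially the content that $\pi_* T_f^q = \deg(\pi)^r[\Pb^r]$ refined at the level of the full Fubini--Study measure). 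On $\Pb^k \setminus U$ the weight $\kappa_a$ is bounded below by a uniform constant $c>0$, and again $T_f^q$ lives on $\Pb^k \setminus U$, so the fiberwise integrand for $\sigma_{T_f^q}$ dominates $c$ times the one for $\mu_f$. Setting $A = \deg(\pi)^r/c$ (or the reciprocal, depending on normalization) yields $\mu_f \leq A\,(h\circ\pi)\,\sigma_{T_f^q}$, and the Radon--Nikodym statement $\mu_f \ll \sigma_{T_f^q}$ follows a fortiori.

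Two technical points need care, and they are where I expect the real work. First, the disintegration of $\sigma_{T_f^q} = T_f^q \wedge \omega_{\Pb^k}^r$ along the fibers $L_a$ with an explicit positive weight: since $T_f^q$ is only a positive closed current with continuous (not smooth) potentials, I would handle this exactly as in the proof of Corollary~\ref{cor-decomp}, approximating $T_f$ by smooth forms $T_l = \omega_{\Pb^k} + \ddc g_l$ with $g_l \to g$ uniformly, performing the fiber disintegration for the smooth $T_l^q$ where $\pi$ restricts to a genuine submersion on $\Pb^k \setminus \crit_\pi$, and passing to the limit using that $T_l^q \wedge R \to T_f^q \wedge R$ for any positive closed $R$ and that $\mu_f$, $\sigma_{T_f^q}$, and $T_f^q\wedge[L_a]$ give no mass to $\crit_\pi$ (an algebraic set not containing $\Jc_q(f)$). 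Second, the uniformity of the lower bound $\kappa_a \geq c > 0$ on $\Pb^k \setminus U$: this is where I use that $\pi$ is a submersion on a neighborhood of $\Jc_q(f)$ (which follows from $I(\pi) \cap \Jc_q(f) = \varnothing$ together with the fact that $\crit_\pi$ is algebraic and, for the standard linear fibration or the cases where the corollary is applied, disjoint from $\Jc_q(f)$) plus compactness of $\Jc_q(f) \subset \Pb^k \setminus U$. The main obstacle is thus not any single deep ingredient but the bookkeeping required to make the fiberwise comparison of the two singular measures $\mu_f$ and $(h\circ\pi)\sigma_{T_f^q}$ rigorous; once the smooth approximation scheme of Corollary~\ref{cor-decomp} is in place, everything reduces to the elementary pointwise inequality $S^r \leq C^r\omega_{\Pb^k}^r$ on $\Pb^k \setminus U$ and a Jacobian comparison on the same set.
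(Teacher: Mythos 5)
The first step of your argument contains a genuine error. From the fact that $S=\deg(\pi)^{-1}\pi^*T_\theta$ has continuous local potentials on $\Pb^k\setminus I(\pi)$ you deduce $S\leq C\,\omega_{\Pb^k}$ on the compact set $\Pb^k\setminus U$, and hence $\mu_f\leq C^r\sigma_{T_f^q}$. A positive closed $(1,1)$-current with continuous potentials need not be dominated by any smooth form: such a domination would force $S$ to be absolutely continuous with bounded density, whereas $T_\theta$ (hence $S$) is in general singular with respect to Lebesgue measure. Note also that your conclusion at that stage does not use the hypothesis $\mu_\theta=h\,\omega_{\Pb^r}^r$ at all, and would therefore assert $\mu_f\ll\sigma_{T_f^q}$ for \emph{every} fibered endomorphism; when $k=2$ this would force the smallest exponent of $\mu_f$ to equal $\frac{1}{2}\log d$ for all such maps (see the discussion following Corollary \ref{cor-abso}), which cannot be right. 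The object that really is dominated by $C\,\omega_{\Pb^k}$ away from $I(\pi)$ is $R:=\deg(\pi)^{-1}\pi^*\omega_{\Pb^r}$, the pull-back of the \emph{smooth} Fubini--Study form, and the entire role of the hypothesis is to convert $S^r$ into $(h\circ\pi)$ times a power of $R$: one has $S^r=\deg(\pi)^{-r}\pi^*\mu_\theta=\deg(\pi)^{-r}\,(h\circ\pi)\,\pi^*(\omega_{\Pb^r}^r)$.

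Your second paragraph contains the correct underlying idea (compare $\pi^*\omega_{\Pb^r}^r$ with $\omega_{\Pb^k}^r$ on $\supp(T_f^q)$), but routes it through a ``coarea-type identity'' disintegrating $\sigma_{T_f^q}$ along the fibers $L_a$ with conditional measures proportional to $T_f^q\wedge[L_a]$. No such identity holds: $\omega_{\Pb^k}^r$ is not a function times $\pi^*\omega_{\Pb^r}^r$, so the conditional measures of $T_f^q\wedge\omega_{\Pb^k}^r$ with respect to $\pi$ are not the fiber measures $T_f^q\wedge[L_a]$ (only an inequality in the relevant direction is available, not an equality). None of this machinery is needed. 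All that is required --- and all the paper uses --- is the pointwise inequality of smooth forms $\pi^*\omega_{\Pb^r}\leq C\,\omega_{\Pb^k}$ on $\Pb^k\setminus U$, where $U$ is a neighborhood of $I(\pi)$ avoiding $\Jc_q(f)$; since $\supp(T_f^q)\subset\Pb^k\setminus U$, this gives $T_f^q\wedge R^r\leq C^r\sigma_{T_f^q}$ globally, an inequality already established in the proof of Theorem \ref{th-mu}. Combined with the identity $\mu_f=T_f^q\wedge S^r=\deg(\pi)^{-r}(h\circ\pi)\,T_f^q\wedge\pi^*(\omega_{\Pb^r}^r)$, it yields $\mu_f\leq A\,(h\circ\pi)\,\sigma_{T_f^q}$ in two lines, with no slicing or smoothing required.
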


\begin{proof}
Let $R:=\deg(\pi)^{-1}\pi^*\omega_{\Pb^r}$ as in the proof of Theorem \ref{th-mu}. The same theorem gives that $\mu_f=\deg(\pi)^{-r}T_f^q\wedge(\pi^*\mu_\theta)=\frac{h\circ\pi}{\deg(\pi)^r} T_f^q\wedge R^r.$ On the other hand, we have seen in the proof of Theorem \ref{th-mu} that $T_f^q\wedge R^r\leq C^r\sigma_{T_f^q}$ thus $\mu_f\leq\left(\frac{C}{\deg(\pi)}\right)^r \, (h \circ\pi) \, \sigma_{T_f^q}.$
\end{proof}

\section{Lyapunov exponents}\label{sec-lyap}
This section is dedicated to the proof of Theorem \ref{th-lyap}. The main ingredients are the formula $\pi_*\mu_f=\mu_\theta,$ the fact that these measures put no mass on proper analytic sets and the local uniform convergence in Oseledec Theorem. We refer the reader to \cite[Chapter 5--6]{barreira-pesin-ergodic} for the details on Oseledec theorem we shall need.
\begin{proof}[Proof of Theorem \ref{th-lyap}]
First, let us set some notations. Let $\crit_\theta$ be the critical set of $\theta.$ We denote by $\widehat \Pb^r:=\{(y_n)_{n\in\Zb}\in \Pb^r\,|\, \theta(y_n)=y_{n+1}\}$ the natural extension and by $\widehat\theta$ the left-shift on $\widehat \Pb^r.$ The projection $\proj\colon\widehat \Pb^r\to \Pb^r$ defined by $\proj((y_n)_{n\in\Zb})=y_0$ satisfies $\theta\circ \proj=\proj\circ\widehat\theta.$ The measure $\mu_\theta$ has a unique lift $\widehat\mu_\theta$ which is invariant by $\widehat\theta$ and such that $\proj_{\theta*}\widehat\mu_\theta=\mu_\theta.$ In what follows, if $\widehat y$ is in $\widehat\Pb^r$ we will write $y_0$ instead of $\proj(\widehat y).$
The measure $\widehat\mu_\theta$ inherits several properties from $\mu_\theta.$ Since $\mu_\theta$ is ergodic and integrates the quasi-plurisubharmonic functions, it follows that $\widehat\mu_\theta$ is ergodic, gives no mass to $\cup_{p \in \Zb} \hat \theta^p (\proj^{-1} (\crit_\theta)) $ and integrates the functions $\log\|D\theta^{\pm1}\|.$ In particular, for $\widehat\mu_\theta$-almost all $\widehat y=(y_n)_{n\in\Zb}$ the differentials
$$D_{\widehat y}\theta^{-n}:=(D_{y_{-n}}\theta^n)^{-1},\ \ \ D_{\widehat y}\theta^n:=D_{y_0}\theta^n$$
are well defined for $n\geq1$. Moreover, by Oseledec theorem, there exist distincts numbers $\Lambda_1,\ldots,\Lambda_s\in\Rb$ and a $\widehat\theta$-invariant set $Y\subset\widehat \Pb^r,$ included in $(\Pb^r \setminus \crit_\theta)^\Zb,$ such that $\widehat\mu_\theta(Y)=1$ and for each $\widehat y\in Y$ the tangent space $T_{y_0}\Pb^r$ admits a splitting $T_{y_0}\Pb^r=\bigoplus_{i=1}^sV_i(\widehat y)$ which satisfies
$$D_{y_0}\theta(V_i(\widehat y))=V_i(\widehat\theta(\widehat y))\ \text{ and }\ \lim_{n\to\pm\infty}\frac{1}{n}\log\|D_{\widehat y}\theta^n(u)\|=\Lambda_i$$
uniformly for $u\in V_i(\widehat y)$ with $\|u\|=1.$ By definition, the multiplicity of $\Lambda_i$ is the dimension $m_i$ of $V_i(\widehat y).$ Moreover, the subspaces $V_i(\widehat y)$ can be characterized by $V_i(\widehat y)\setminus\{0\}=\{u\in T_{y_0}\Pb^r\setminus\{0\}\,|\, \lim_{n\to\pm\infty} n ^{-1}\log\|D_{\widehat y}\theta^n(u)\|=\Lambda_i\}.$

The natural extension $\widehat\Pb^k,$ the map $\widehat f$ and the measure $\widehat\mu_f$ are defined in the same way with respect to $f$ and since $\pi\circ f=\theta\circ\pi,$ the map $\pi$ lifts to a map $\widehat\pi\colon\widehat\Pb^k\to\widehat\Pb^r$ such that $\widehat\pi\circ\widehat f=\widehat\theta\circ\widehat\pi.$ The uniqueness of the lift $\widehat\mu_\theta$ of $\mu_\theta$ and the fact that $\pi_*\mu_f=\mu_\theta$ imply that $\widehat\pi_*\widehat\mu_f=\widehat\mu_\theta.$ In particular, the set $\widehat\pi^{-1}(Y)$ has full $\widehat\mu_f$-measure. The measure $\widehat\mu_f$ also admits a Oseledec decomposition on a $\widehat f$-invariant set $Z$ of full $\widehat\mu_f$-measure. And since the measure $\mu_f$ gives no mass to proper analytic sets, if $\crit_\pi$ denotes the critical set of $\pi$ (i.e. the union of $I(\pi)$ with the set of points in $\Pb^k\setminus I(\pi)$ where the differential of $\pi$ has rank strictly less than $r$) then  the $\widehat f$-invariant set
$$X:=Z\cap\widehat\pi^{-1}(Y)\cap(\Pb^k\setminus(\crit_f\cup\crit_\pi))^\Zb$$
also has full $\widehat\mu_f$-measure.

Now, fix $i\in\{1,\ldots,s\}$ and for $\widehat x\in X$ consider the subspace of $T_{x_0}\Pb^k$ defined by
$$W_i(\widehat x)=(D_{x_0}\pi)^{-1}(V_i(\widehat\pi(\widehat x))).$$
As $X$ is disjoint from the critical sets of $f$ and $\pi$, these subspaces have dimension $m_i+q$ and define a $Df$-invariant distribution i.e. $D_{x_0}f(W_i(\widehat x))=W_i(\widehat f(\widehat x)).$ Therefore, Oseledec theorem applies to the measurable cocycle defined by the action of $Df$ on $W_i(\widehat x)$ and induces $\widehat\mu_f$-almost everywhere a decomposition  $W_i(\widehat x)=\bigoplus_{j=1}^lF_{ij}(\widehat x)$ for some integer $l\geq1.$ Since this cocycle is a sub-cocycle of the standard one, each $F_{ij}(\widehat x)$ is associated to a Lyapunov exponent $\lambda_j$ of $\mu_f$ and satisfies 
$$ F_{ij}(\widehat x)\setminus\{0\}=\{v\in W_i(\widehat x)\setminus\{0\}\,|\, \lim_{n\to\pm\infty}n ^{-1}\log\|D_{\widehat x}f^n(v)\|=\lambda_j\}. $$ 
In particular, $\dim(F_{ij}(\widehat x))$ is bounded by the multiplicity of $\lambda_j.$
Now we show that if $F_{ij}(\widehat x)$ is not contained in $\ker D_{x_0}\pi$ then $\lambda_j=\Lambda_i$, that property will be sufficient to conclude. So let $\widehat x\in X$ and $j\in\{1,\ldots,l\}$ such that $F_{ij}(\widehat x)\not\subset\ker D_{x_0}\pi.$ In particular, there exists $v\in F_{ij}(\widehat x)\setminus\ker D_{x_0}\pi$ and so
\begin{align*}
\Lambda_i=\lim_{n\to + \infty}\frac{1}{n}\log\|D_{\widehat x}(\theta^n\circ\pi)(v)\|&=\lim_{n\to + \infty}\frac{1}{n}\log\|D_{\widehat x}(\pi\circ f^n)(v)\|\\
&\leq\lim_{n\to + \infty}\frac{1}{n}\log\|D_{\widehat x}f^n(v)\|=\lambda_j.
\end{align*}
Here, the inequality comes from $I(\pi)\cap\supp(\mu_f)=\varnothing$ and thus, there exists $C>0$ such that $\|D_x\pi(v)\|\leq C\|v\|$ for all $x\in\supp(\mu_f)$ and $v\in T_x\Pb^k.$

For the converse inequality, observe that since the subspaces $W_i(\widehat x)$ and $F_{ij}(\widehat x)$ depends measurably on $\widehat x,$ there exist a constant $c>0$ and $B\subset X$ with $\widehat\mu_f(B)>0$ such that for each $\widehat x\in B$ there is a subspace $E(\widehat x)\subset F_{ij}(\widehat x)$ of positive dimension satisfying $E(\hat x) \cap \ker D_{x_0}\pi = \{ 0 \}$ and $\|D_{x_0}\pi(v)\|\geq c\|v\|$ for all $v\in E(\widehat x)$. By Poincar\'e recurrence theorem, for $\widehat\mu_f$-almost all $\widehat x\in X$ there exists an increasing sequence $(k_n)_{n \geq 1}$ of integers such that  $\widehat f^{k_n}(\widehat x)\in B$ for all $n \geq 1$. Therefore, if $v_n\in F_{ij}(\widehat x)$ is such that $\|v_n\|=1$ and $D_{\widehat x}f^{k_n}(v_n)\in E(\widehat f^{k_n}(\widehat x))$ then
\begin{align*}
\lambda_j=\lim_{n\to + \infty}\frac{1}{k_n}\log\|D_{\widehat x}f^{k_n}(v_n)\|&\leq\lim_{n\to + \infty}\frac{1}{k_n}\log\|D_{\widehat x}(\pi\circ f^{k_n})(v_n)\|\\
&=\lim_{n \to + \infty}\frac{1}{k_n}\log\|D_{\widehat x}(\theta^{k_n}\circ\pi)(v_n)\|\leq\Lambda_i.
\end{align*}
Here, the first equality comes from the fact that the convergence in Oseledec theorem is uniform on the unit sphere of $F_{ij}(\widehat x).$ The last inequality uses a similar argument as well as the uniform bound $\|D_{x}\pi(v)\|\leq C\|v\|$ for $x\in\supp(\mu_f)$ and the fact that $v_n\notin\ker D_{x_0}\pi$ since  $D_{\widehat x}f^{k_n}(v_n)\in E(\widehat f^{k_n}(\widehat x)).$

We have shown that if $F_{ij}(\widehat x)\not\subset\ker D_{x_0}\pi$ then $\lambda_j=\Lambda_i$. Thus there is a unique $j \in \{ 1 , \ldots , l \}$ with this property. As $\dim(W_i(\widehat x))=m_i+q$ and $\dim(\ker D_{x_0}\pi)=q,$ the dimension of $F_{ij}(\widehat x)$ has to be at least $m_i.$
\end{proof}

\section{Generalized Bedford-Jonsson's formula}\label{sec-bj}
In this section, we assume that the fibration $\pi$ is the standard linear fibration $\pi\colon\Pb^k\dashrightarrow\Pb^r$ defined by $\pi[y:z]=[y]$ where $y:=(y_0,\ldots,y_r)\in\Cb^{r+1}$ and $z=(z_0,\ldots,z_{q-1})\in\Cb^q.$ We first analyse some basic properties of maps preserving such a fibration and then we give a proof of Theorem \ref{th-bj}.

As we have said in the introduction, the indeterminacy set of $\pi$ corresponds to $\{y=0\}\simeq \Pb^{q-1}$ and each fiber $L_a:=\overline{\pi^{-1}(a)}$ is a projective space $\Pb^q$ in which $I(\pi)$ can be identified with the hyperplane at infinity, i.e. $L_a\setminus I(\pi)\simeq \Cb^q.$ If $f\colon\Pb^k\to\Pb^k$ preserves such a fibration then it lifts to a polynomial endomorphism $F$ of $\Cb^{k+1}$ of the form
$$F(y,z)=(\Theta(y),R(y,z)),$$
where $y\in\Cb^{r+1},$ $z\in\Cb^q$ and $\Theta,$ $R$ are homogeneous polynomials. The map $\Theta$ is a lift of the endomorphism $\theta$ of $\Pb^r$ such that $\theta\circ\pi=\pi\circ f.$ The inequality $\|\Theta(y)\|_\infty\leq\|F(y,z)\|_\infty$ implies that the functions
$$G_\Theta(y):=\lim_{n\to\infty}\frac{1}{d^n}\log\|\Theta^n(y)\|_\infty\ \ \text{ and } \ \ G_F(y,z):=\lim_{n\to\infty}\frac{1}{d^n}\log\|F^n(y,z)\|_\infty,$$
satisfy $G_\Theta(y)\leq G_F(y,z).$ The difference of these functions goes down to $\Pb^k$ and we define it as the \textit{relative Green function} of $f,$ $G[y:z]:=G_F(y,z)-G_\Theta(y).$ It is the unique lower semicontinuous function such that
$$\ddc G=T_f-S\ \ \text{ and }\ \ \min G=0.$$
Here, $T_f$ is the Green current of $f$ and $S:=\pi^*T_\theta.$
It is easy to check that $G$ is invariant (i.e. $d^{-1}G\circ f=G$), continuous on $\Pb^k\setminus I(\pi)$ and $\{G=+\infty\}=I(\pi).$ As we will see in the proof of the next lemma, $I(\pi)$ is an attracting set for $f$ and $G$ encodes the speed of convergence toward it. In particular, the assumptions of Theorem \ref{th-mu} are satisfied.
\begin{lemma}\label{le-supp-sans-para}
If $f$ preserves the linear fibration $\pi\colon\Pb^k\dashrightarrow\Pb^r$, then $\Jc_q(f)\cap I(\pi)=\varnothing.$
\end{lemma}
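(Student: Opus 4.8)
The plan is to show that $T_f^q$ vanishes on a neighborhood of $I(\pi)$; since $\Jc_q(f)=\supp(T_f^q)$, this suffices. Write the lift of $f$ as $F(y,z)=(\Theta(y),R(y,z))$ on $\Cb^{k+1}$ with $\Theta,R$ homogeneous of degree $d$, as above. Since $\Theta(0)=0$, the punctured subspace $\{y=0\}\setminus\{0\}$ is invariant by $F$, and $F$ acts on it through $\Lambda(z):=R(0,z)$, a lift of the endomorphism $f_{I(\pi)}$ of $\Pb^{q-1}$; in particular $\Lambda^{-1}(0)=\{0\}$ and $\|\Lambda(z)\|\asymp\|z\|^d$. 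Let $G_\Lambda(z):=\lim_n d^{-n}\log\|\Lambda^n(z)\|$ be the associated Green function; it is continuous on $\Cb^q\setminus\{0\}$, satisfies $G_\Lambda(\lambda z)=G_\Lambda(z)+\log|\lambda|$, and $\ddc G_\Lambda$ is the pull-back of $T_{f_{I(\pi)}}$ under $z\mapsto[z]$. The key point is the identity
$$G_F(y,z)=G_\Lambda(z)$$
on an open neighborhood $\mathcal U$ of $\{y=0\}\setminus\{0\}$ in $\Cb^{k+1}$. This is the quantitative form of the assertion that $I(\pi)$ is a (super-)attracting set for $f$: indeed $G=G_F-G_\Theta$ then tends to $+\infty$ along $I(\pi)$, since $G_\Theta(y)=\log\|y\|+O(1)\to-\infty$ there.

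To establish this identity, I would fix $(0,z_0)$ with $z_0\neq0$ and work on $N:=\{\|z-z_0\|<\delta\}\cap\{\|y\|<\varepsilon\}$ with $\delta$ small and $\varepsilon$ small depending on $\delta$. Writing $F^n(y,z)=(\Theta^n(y),R_n(y,z))$, the normal part is super-contracted: $\|\Theta^n(y)\|\asymp\|y\|^{d^n}$, so $\log\|\Theta^n(y)\|\le d^n(\log\varepsilon+O(1))$, which stays well below $\log\|\Lambda^n(z)\|=d^n(G_\Lambda(z)+o(1))$ once $\varepsilon$ is chosen with $\log\varepsilon<\inf_{\|z-z_0\|\le\delta}G_\Lambda(z)-O(1)$. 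Comparing $R_n$ with $\Lambda^n$ via the recursions $R_{n+1}(y,z)=R(\Theta^n(y),R_n(y,z))$ and $\Lambda^{n+1}(z)=R(0,\Lambda^n(z))$ and using $\|DR\|=O(\|\cdot\|^{d-1})$, one obtains, with $\varepsilon_n:=\|R_n(y,z)-\Lambda^n(z)\|/\|\Lambda^n(z)\|$ and $t_n:=\|\Theta^n(y)\|/\|\Lambda^n(z)\|$, the inequality $\varepsilon_{n+1}\lesssim t_n+\varepsilon_n$; since $\varepsilon_0=0$ and $\sum_n t_n<\infty$ (the $t_n$ decay super-geometrically by the previous estimate), the $\varepsilon_n$ remain bounded. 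Hence $\log\|F^n(y,z)\|=\log\|R_n(y,z)\|=\log\|\Lambda^n(z)\|+O(1)$, and dividing by $d^n$ and letting $n\to\infty$ gives $G_F(y,z)=G_\Lambda(z)$ on $N$. Taking the union of these neighborhoods over $z_0$ yields $\mathcal U$.

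With the identity in hand, on $\mathcal U$ the function $G_F$ depends on $(y,z)$ only through $[z]$, up to the additive term $\log\|z\|$; consequently $\ddc G_F$ restricted to $\mathcal U$ equals the pull-back of $T_{f_{I(\pi)}}$ under $(y,z)\mapsto[z]$. Since $\pi_0^*T_f=\ddc G_F$ for the canonical projection $\pi_0\colon\Cb^{k+1}\setminus\{0\}\to\Pb^k$, this descends to the statement that $T_f=(\pi')^*T_{f_{I(\pi)}}$ on a neighborhood $V$ of $I(\pi)$, where $\pi'\colon\Pb^k\dashrightarrow\Pb^{q-1}$, $\pi'[y:z]=[z]$, is the complementary linear fibration — a holomorphic submersion near $I(\pi)$ because $I(\pi')=\{z=0\}$ is disjoint from $I(\pi)=\{y=0\}$. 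As pull-back by a submersion commutes with self-intersection, $T_f^q=(\pi')^*\big(T_{f_{I(\pi)}}^q\big)$ on $V$; but $T_{f_{I(\pi)}}$ is a $(1,1)$-current on the $(q-1)$-dimensional manifold $\Pb^{q-1}$, so $T_{f_{I(\pi)}}^q=0$ and therefore $T_f^q=0$ on $V$. Hence $\Jc_q(f)=\supp(T_f^q)$ is disjoint from $I(\pi)$. The one delicate step is the normal super-attraction estimate of the second paragraph; the rest is formal once it is available.
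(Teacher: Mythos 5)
Your overall strategy (kill $T_f^q$ on a neighborhood of $I(\pi)$) and your final reduction --- if $T_f$ were the pull-back of $T_{f_{I(\pi)}}$ by the complementary submersion $\pi'[y:z]=[z]$ near $I(\pi)$, then $T_f^q=(\pi')^*\bigl(T_{f_{I(\pi)}}^q\bigr)=0$ --- are sound. The genuine gap is the central identity $G_F(y,z)=G_\Lambda(z)$ near $\{y=0\}\setminus\{0\}$: it is false, and the error-propagation argument offered for it cannot work. The hidden constant in ``$\varepsilon_{n+1}\lesssim t_n+\varepsilon_n$'' is the factor by which $\Lambda$ amplifies \emph{relative} errors, and that factor is generically of size $d$: already for $\Lambda(z)=z^d$ one has $\Lambda\bigl(w(1+\delta)\bigr)\approx\Lambda(w)(1+d\delta)$. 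So the correct recursion is $\varepsilon_{n+1}\le\kappa(t_n+\varepsilon_n)$ with $\kappa\ge d>1$, and $\varepsilon_n$ grows like $d^n$ instead of staying bounded; once $\varepsilon_n\gtrsim 1$ the lower bound on $\|R_n\|$ is lost. A concrete counterexample to the identity itself: $F(y_0,y_1,z)=(y_0^2,\,y_1^2,\,z^2+y_0y_1)$ on $\Cb^3$ (so $k=2$, $q=r=1$, $\Lambda(z)=z^2$, $G_\Lambda(z)=\log|z|$). Telescoping $\log|R_{n+1}|=2\log|R_n|+\log|1+(y_0y_1)^{2^n}/R_n^2|$ gives $G_F(y,z)=\log|z|+\tfrac12\log|1+y_0y_1/z^2|+\cdots$, which is not $\log|z|$. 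In this example the correction happens to be pluriharmonic on the trapping cone, so $T_f$ still vanishes near $I(\pi)$; but for $q\ge2$ the analogous correction terms involve $\log\|\Lambda(\cdot)\|$ and are not pluriharmonic, so even the weaker statement $T_f=(\pi')^*T_{f_{I(\pi)}}$ near $I(\pi)$ is not reachable by this route --- it is essentially a theorem about attracting currents and needs different tools.

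The part of your argument that is correct and is actually all that is needed is the crude super-attraction estimate: $\|\Theta(y)\|_\infty\le\alpha\|y\|_\infty^d$ together with $\|R(y,z)\|_\infty\ge(\beta-\gamma\epsilon)\|z\|_\infty^d$ shows that $U_\epsilon=\{\|y\|_\infty<\epsilon\|z\|_\infty\}$ is a trapping region, $f(\overline{U_\epsilon})\subset\overline{U_{\epsilon/2}}$. The paper then sidesteps Green functions entirely: pick a generic linear $r$-plane $H$ with $H\cap I(\pi)=\varnothing$, regularize $[H]$ into a smooth closed positive $(q,q)$-form $\widetilde\omega$ of mass $1$ supported in $\Pb^k\setminus U_\epsilon$, and use the equidistribution $d^{-nq}f^{n*}\widetilde\omega\to T_f^q$. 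Forward invariance of $U_\epsilon$ forces every $f^{n*}\widetilde\omega$ to be supported outside $U_\epsilon$, hence $\supp(T_f^q)\cap U_\epsilon=\varnothing$. You should replace your third and fourth paragraphs by an argument of this type (or by a soft normal-families/extremality argument), keeping only your trapping estimate.
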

\begin{proof}
Using the notations introduced above, we will prove that if $\epsilon>0$ is small enough then the region
$$U_\epsilon:=\{[y:z]\in\Pb^k\,|\,\|y\|_\infty<\epsilon\|z\|_\infty\}$$
satisfies $f(\overline{U_\epsilon}) \subset \overline{U_{\epsilon/2}}$. In particular $\cap_{n\geq1}f^n(U_\epsilon)=I(\pi).$ Let us define
$$\alpha:=\max_{\|y\|_\infty=1}\|\Theta(y)\|_\infty\ \ \text{ and }\ \ \beta:=\min_{\|z\|_\infty=1}\|R(0,z)\|_\infty.$$
Since $f$ is a well-defined endomorphism of $\Pb^k,$ we have $\beta>0.$ Thus, if $[y:z]\in \overline{U_\epsilon}$ then
$$\|\Theta(y)\|_\infty\leq\alpha\|y\|^d_\infty\leq\alpha\epsilon^d\|z\|_\infty^d\ \text{ and }\ \|R(y,z)\|_\infty\geq \beta\|z\|_\infty^d-\gamma\epsilon\|z\|_\infty^d,$$
where $\gamma$ is the sum of the moduli of the coefficients of $R(y,z)-R(0,z).$ Therefore, if $\epsilon>0$ is small enough and $[y:z]\in\overline{U_\epsilon}$ then
$$\|\Theta(y)\|_\infty\leq\alpha\epsilon^d\|z\|_\infty\leq\epsilon(\beta-\gamma\epsilon)\|z\|^d_\infty/2\leq\epsilon\|R(y,z)\|_\infty/2,$$
which gives $f[y:z]\in \overline{U_{\epsilon/2}}$.

On the other hand, if $H$ is a generic linear subspace of $\Pb^k$ of dimension $r$ then $H\cap I(\pi)=\varnothing$ thus $H\cap U_\epsilon=\varnothing$ for $\epsilon>0$ small enough. Hence, we can regularize the current of integration $[H]$ to obtain a positive closed $(q,q)$ smooth form $\widetilde\omega$ of mass $1$ supported in $\Pb^k\setminus U_\epsilon.$ Equidistribution results for smooth forms (see \cite{ds-lec}) give $T_f^q=\lim_{n\to\infty}d^{-nq}f^{n*}\widetilde\omega.$ The fact that $f(U_\epsilon)\subset U_\epsilon$ implies that $\supp(T_f^q)\cap U_\epsilon=\varnothing$ and thus $\Jc_q(f)\cap I(\pi)=\varnothing.$
\end{proof}
\begin{remark}
The fact that $I(\pi)$ is an attracting set of dimension $q-1$ implies directly by \cite{t-attractor} that $\Jc_q(f)\cap I(\pi)=\varnothing.$ However, we will use the same proof as above in a parametric setting (see Lemma \ref{le-supp}).
\end{remark}
The next result follows easily from the fact that $f$ preserves the fibration defined by $\pi.$
\begin{lemma}\label{le-crit}
The critical current of $f$ admits a decomposition
$$[\crit_f]=[C_\infty]+[C_\sigma],$$
where $[C_\infty]=\pi^*[\crit_\theta]$ and $[C_\sigma]$ is the current of integration on an algebraic set, called the sectional part of $\crit_f.$
\end{lemma}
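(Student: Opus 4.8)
The plan is to work with a homogeneous lift and use the triangular structure of the differential. As recalled just before Lemma~\ref{le-supp-sans-para}, since $f$ preserves the standard linear fibration it lifts to a homogeneous polynomial endomorphism $F\colon\Cb^{k+1}\to\Cb^{k+1}$ of degree $d$ of the form $F(y,z)=(\Theta(y),R(y,z))$, with $y\in\Cb^{r+1}$, $z\in\Cb^{q}$ and $\Theta$ a lift of $\theta$, and the critical current $[\crit_f]$ is the divisor cut out in $\Pb^k$ by $\Jac F=\det DF$. Since the coordinates of $\Theta$ do not involve $z$, the matrix $DF$ is block lower triangular,
$$DF=\begin{pmatrix} D_y\Theta & 0\\[2pt] D_yR & D_zR\end{pmatrix},\qquad\text{hence}\qquad \Jac F=\det(D_y\Theta)\cdot\det(D_zR),$$
where $\det(D_y\Theta)$ depends on $y$ alone and is homogeneous of degree $(r+1)(d-1)$ while $\det(D_zR)$ is homogeneous of degree $q(d-1)$. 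Since $\theta$ is an endomorphism of $\Pb^r$ of degree $d\ge2$ its lift is non-degenerate, so $\det(D_y\Theta)\not\equiv0$; and $\Jac F\not\equiv0$ because $f$ is an endomorphism of $\Pb^k$, whence $\det(D_zR)\not\equiv0$ too. Thus both vanishing loci are genuine algebraic hypersurfaces of $\Pb^k$ and, counting multiplicities,
$$[\crit_f]=\bigl[\{\det(D_y\Theta)=0\}\bigr]+\bigl[\{\det(D_zR)=0\}\bigr].$$

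It then remains to identify the first summand with $\pi^*[\crit_\theta]$. By definition $[\crit_\theta]$ is the divisor of $\det(D_y\Theta)$ on $\Pb^r$, so $\{\det(D_y\Theta)=0\}\subset\Pb^k$ is the union of the fibers $L_a$ with $a\in\crit_\theta$ — the ``fibered'' part of $\crit_f$. Since $\pi[y:z]=[y]$ is a submersion on $\Pb^k\setminus I(\pi)$ with $I(\pi)=\{y=0\}$ of codimension $r+1\ge2$, the pull-back $\pi^*[\crit_\theta]$ defined in Section~\ref{sec-basics} is the trivial extension across $I(\pi)$ of the ordinary pull-back over $\Pb^k\setminus I(\pi)$; the latter is the restriction to $\Pb^k\setminus I(\pi)$ of the divisor of $\det(D_y\Theta)$, and its trivial extension is that whole divisor since $I(\pi)$, having codimension $\ge2$, contributes no divisorial mass. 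Hence, setting $[C_\infty]:=\pi^*[\crit_\theta]=[\{\det(D_y\Theta)=0\}]$ and $[C_\sigma]:=[\{\det(D_zR)=0\}]$ (the current of integration, with multiplicities, on the algebraic hypersurface $C_\sigma:=\{\det(D_zR)=0\}$), we obtain $[\crit_f]=[C_\infty]+[C_\sigma]$.

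The argument is essentially a computation; the step needing care is the last identification, i.e.\ checking that the current-theoretic pull-back $\pi^*[\crit_\theta]$ of Section~\ref{sec-basics} equals the naive preimage divisor $\{\det(D_y\Theta)(y)=0\}$, multiplicities included and with the correct trivial extension over $I(\pi)$. To justify the name ``sectional part'' I would also note that $\det(D_zR)$ restricts on $I(\pi)=\{y=0\}$ to $\Jac\bigl(R(0,\cdot)\bigr)$, which is $\not\equiv0$ since $R(0,\cdot)$ defines the endomorphism $f_{I(\pi)}$ of $\Pb^{q-1}$ (as in the proof of Lemma~\ref{le-supp-sans-para}); in particular $I(\pi)\not\subset C_\sigma$ and, after moving into $C_\infty$ any $\pi$-fibered component that $C_\sigma$ might contain — which leaves the statement unchanged — the remaining $C_\sigma$ genuinely dominates $\Pb^r$.
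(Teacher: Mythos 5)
Your proposal is correct and follows essentially the same route as the paper: both exploit the block-triangular form of $DF$ for the lift $F(y,z)=(\Theta(y),R(y,z))$ to factor $\det DF=\det D\Theta\cdot\det D_zR$, and then identify the divisor of $\det D\Theta$ with $\pi^*[\crit_\theta]$ (the paper phrases the decomposition via $\rho^*[\crit_f]=\ddc\log|\det DF|$ and Lelong--Poincar\'e, which is the same computation in current-theoretic language). Your extra verifications --- the degree count, the non-vanishing of each factor, and the remark that $\det D_zR$ does not vanish identically on $I(\pi)$ --- are correct and merely make explicit what the paper leaves as ``easy to check.''
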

\begin{proof}
If $\rho\colon\Cb^{k+1}\setminus\{0\}\to\Pb^k$ is the standard projection then the critical current of $f$ can be defined by $\rho^*[\crit_f]=\ddc\log|\det DF|.$
The fact that $F$ has the form $F(y,z)=(\Theta(y),R(y,z))$ implies that $\det DF=\det D\Theta\times\det D_zR$ where $D_zR$ denotes the $q\times q$ matrix formed by the partial derivatives of $R$ in the $z_0,\ldots,z_{q-1}$ directions. Hence,
$$[\crit_f]=[C_\infty]+[C_\sigma]$$
where $\rho^*[C_\sigma]=\ddc\log|\det D_zR|$ and $\rho^*[C_\infty]=\ddc\log|\det D\Theta|.$ It is easy to check that $[C_\infty]=\pi^*[\crit_\theta].$
\end{proof}

If $a$ is a $n$-periodic point of $\theta$ then $f^n_{|L_a}$ can be identified to a regular polynomial endomorphism of $\Cb^q\simeq L_a\setminus I(\pi)$ as follows. Since $\theta^n(a)=a,$ there exists $A=(a_0,\ldots,a_r)\in\Cb^{r+1}$ such that $a=[a_0:\cdots:a_r]$ and $\Theta^n(A)=A.$ Hence, if $[y:z]\in\Pb^k$ belongs to $L_a\setminus I(\pi)$ then there exists a unique $Z\in\Cb^q$ such that $[y:z]=[A:Z]$ and $f^n_{|L_a\setminus I(\pi)}$ can be identified with $R_n(Z):=R_{\Theta^{n-1}(A)}\circ\cdots\circ R_A(Z)$ where $R_A(Z):=R(A,Z).$ With the same notations, $G[y:z]=G_F(A,Z)-G_\Theta(A)$ and, as $\Theta^n(A)=A$ implies $G_\Theta(A)=0,$ we have
$$G[y:z]=\lim_{l\to\infty}\frac{1}{d^{ln}}\max(\log\|R_n^l(Z)\|_\infty,0),$$
which is exactly the Green function associated to the polynomial mapping $R_n.$ Hence, using this identification the equilibrium measure of $f^n_{|L_a}$ is $T_f^q\wedge[L_a].$ The critical current of $f^n_{|L_a}$ is $[\crit_{f^n}]\wedge[L_a]$ if $a$ is not a critical point of $\theta^n$ (i.e. if the wedge product is well-defined). In fact, without any assumption on $a,$ since $[\crit_{f^n}]=\sum_{i=0}^{n-1}f^{i*}[\crit_f]$ one can check using Lemma \ref{le-crit} that the critical current associated to the restriction of $f^n$ to $L_a \simeq \Pb^q$ corresponds to $\left(\sum_{i=0}^{n-1}f^{i*}[C_\sigma]\right)\wedge[L_a]+d^n[I(\pi)].$ Hence, the Bedford-Jonsson formula for regular polynomial endomorphisms of $\Cb^q$ (see Theorem \ref{bdjn}) yields the following result.
\begin{lemma}\label{le-lyap}
Let $a\in\Pb^r$ be such that $\theta^n(a)=a.$ If $\Lambda_0$ (resp. $\Lambda(f^n_{|L_a})$) denotes the sum of the Lyapunov exponents of $f_{|I(\pi)}$ (resp. $f^n_{|L_a}$) with respect to its equilibrium measure then
$$\Lambda(f^n_{|L_a})=n\log d+n\Lambda_0+\sum_{i=0}^{n-1}\langle T_f^{q-1}\wedge[C_\sigma]\wedge[L_{\theta^i(a)}],G\rangle.$$
\end{lemma}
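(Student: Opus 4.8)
The plan is to apply the Bedford--Jonsson formula (Theorem~\ref{bdjn}) to the regular polynomial endomorphism $R_n:=f^n_{|L_a}$ of $\Cb^q\simeq L_a\setminus I(\pi)$ and to match, one by one, the three terms it produces with the three terms of the claimed equality. Since $f^n$ has degree $d^n$ on $\Pb^k$ and $L_a$ is a linear subspace, $R_n$ has algebraic degree $d^n$, so the first term of Theorem~\ref{bdjn} is $\log(d^n)=n\log d$. The hyperplane at infinity of $L_a$ is exactly $I(\pi)$, and the restriction of $R_n$ to it is $f^n_{|I(\pi)}=(f_{|I(\pi)})^n$; its equilibrium measure coincides with that of $f_{|I(\pi)}$ and its Lyapunov exponents are $n$ times those of $f_{|I(\pi)}$, so the second term of Theorem~\ref{bdjn} equals $n\Lambda_0$.

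For the third term I would invoke the identifications recalled just before the lemma: under $L_a\setminus I(\pi)\simeq\Cb^q$ the Green function $G_{R_n}$ of $R_n$ is the restriction of $G$, the current $T_{R_n}^{q-1}$ is the restriction of $T_f^{q-1}\wedge[L_a]$, and the critical set $\crit_{R_n}\subset\Cb^q$ is cut out by $\sum_{i=0}^{n-1}f^{i*}[C_\sigma]\wedge[L_a]$ (any component of the critical current of the $\Pb^q$-extension of $R_n$ lying on the hyperplane at infinity plays no role, since the Bedford--Jonsson pairing only involves $\crit_{R_n}$ inside $\Cb^q$). Consequently Theorem~\ref{bdjn} gives
\begin{equation*}
\Lambda(f^n_{|L_a})=n\log d+n\Lambda_0+\sum_{i=0}^{n-1}\langle T_f^{q-1}\wedge f^{i*}[C_\sigma]\wedge[L_a],\,G\rangle .
\end{equation*}

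It remains to identify each summand with $\langle T_f^{q-1}\wedge[C_\sigma]\wedge[L_{\theta^i(a)}],G\rangle$, which is where the $f$-equivariance of the objects enters. Writing $\mu_i:=T_f^{q-1}\wedge f^{i*}[C_\sigma]\wedge[L_a]$, the relation $G\circ f^i=d^i G$ gives $\langle\mu_i,G\rangle=d^{-i}\langle f^i_*\mu_i,G\rangle$; the projection formula gives $f^i_*\mu_i=[C_\sigma]\wedge f^i_*(T_f^{q-1}\wedge[L_a])$; and from $f^i(L_a)=L_{\theta^i(a)}$ together with the identity $T_f^{q-1}\wedge[L_a]=d^{-i(q-1)}(f^i_{|L_a})^*(T_f^{q-1}\wedge[L_{\theta^i(a)}])$ (a consequence of $G\circ f^i=d^i G$ and the vanishing of $S=\pi^*T_\theta$ on $L_a\setminus I(\pi)$) and $(f^i_{|L_a})_*(f^i_{|L_a})^*=d^{iq}$, one obtains $f^i_*(T_f^{q-1}\wedge[L_a])=d^i\,T_f^{q-1}\wedge[L_{\theta^i(a)}]$. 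Combining these equalities yields $\langle\mu_i,G\rangle=\langle T_f^{q-1}\wedge[C_\sigma]\wedge[L_{\theta^i(a)}],G\rangle$, and summing over $i$ finishes the proof.

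The main obstacle is carrying out this last paragraph rigorously, since the currents involved are singular: one has to check that all the wedge products are well-defined positive measures --- which holds because $[C_\sigma]$, having no component equal to a union of fibres, shares no component with the $f^i$-preimage of a fibre nor with $L_a$ --- that $T_f^{q-1}\wedge[L_a]$ charges no analytic subset of $I(\pi)$, so that the identity $f^i_*(T_f^{q-1}\wedge[L_a])=d^i\,T_f^{q-1}\wedge[L_{\theta^i(a)}]$ is valid on all of $\Pb^k$ and not merely off $I(\pi)$, and that the change of variables $\langle\mu_i,G\rangle=d^{-i}\langle f^i_*\mu_i,G\rangle$ is legitimate for the unbounded function $G$; this last point can be reduced to the continuous case by the regularisation argument already used in the proof of Corollary~\ref{cor-decomp}.
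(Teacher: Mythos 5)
Your proposal is correct and follows essentially the same route as the paper: apply the Bedford--Jonsson formula to $f^n_{|L_a}$ viewed as a regular polynomial endomorphism of $L_a\setminus I(\pi)\simeq\Cb^q$, identify its critical current inside $\Cb^q$ with $\sum_{i=0}^{n-1}f^{i*}[C_\sigma]\wedge[L_a]$, and then convert each summand via the invariance $G\circ f^i=d^iG$, $f^{i*}T_f=d^iT_f$ and $f^i_*[L_a]=d^{qi}[L_{\theta^i(a)}]$ (the paper phrases this as $f^i_*(GT_f^{q-1}\wedge[L_a])=GT_f^{q-1}\wedge[L_{\theta^i(a)}]$, which is exactly your chain of identities). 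Your closing remarks on well-definedness of the singular wedge products match the concerns the paper defers to its preliminary discussion and to Section~\ref{sec-basics}.
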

\begin{proof}
Since the dynamics of $R_n$ on the hyperplane at infinity can be identified to the one of $f^n$ on $I(\pi),$ the discussion above and the Bedford-Jonsson formula give 
$$\Lambda(f^n_{|L_a})=n\log d+n\Lambda_0+\left\langle T_f^{q-1}\wedge\left(\sum_{i=0}^{n-1}f^{i*}[C_\sigma]\right)\wedge[L_a],G\right\rangle.$$
We conclude by using 
$$f^i_*(GT^{q-1}_f\wedge[L_a])=GT^{q-1}_f\wedge\frac{f^i_*[L_a]}{d^{qi}}\ \ \text{ and } \ \ \frac{f^i_*[L_a]}{d^{qi}}=[L_{\theta^i(a)}], $$
where the first equality follows from the invariance of $T_f$ and $G$, and the second one from the invariance of the fibration. 
\end{proof}
Observe that since $\pi$ is a submersion on $\Pb^k\setminus I(\pi),$ we have $$S^r =(\pi^*T_\theta)^r=\pi^*\mu_\theta=\int_{\Pb^r}[L_a]d\mu_\theta(a) $$
on $\Pb^k\setminus I(\pi)$. 
 As $I(\pi)$ has dimension $q-1,$ these equalities extends to $\Pb^k.$
This allows us to use the continuity results obtained in Section \ref{sec-basics} to prove the following result.
\begin{lemma}\label{le-lyap-approx}
$$\Lambda_\sigma=\lim_{n\to\infty}\frac{1}{nd^{rn}}\sum_{\theta^n(a)=a}\Lambda(f^n_{|L_a}).$$ 
\end{lemma}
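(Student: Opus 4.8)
The plan is to combine Lemma \ref{le-lyap}, which evaluates $\Lambda(f^n_{|L_a})$ for each $n$-periodic fiber, with the equidistribution of $n$-periodic points of $\theta$ toward $\mu_\theta$. Summing the formula of Lemma \ref{le-lyap} over all $a$ with $\theta^n(a)=a$ and dividing by $nd^{rn}$, the two leading terms give exactly
$$\frac{1}{nd^{rn}}\sum_{\theta^n(a)=a}\bigl(n\log d+n\Lambda_0\bigr)=\log d+\Lambda_0,$$
since $\theta$ has $d^{rn}$ periodic points of period $n$ counted with multiplicity; by Theorem \ref{th-bj} this already accounts for $\log d + \Lambda_0$, so it remains to show that the remaining sum converges to $\langle T_f^{q-1}\wedge S^r\wedge[C_\sigma],G\rangle$. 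For the remaining term, the inner sum $\sum_{i=0}^{n-1}\langle T_f^{q-1}\wedge[C_\sigma]\wedge[L_{\theta^i(a)}],G\rangle$ telescopes over the orbit: as $a$ ranges over all $n$-periodic points and $i$ over $\{0,\dots,n-1\}$, the point $\theta^i(a)$ again ranges over all $n$-periodic points, each hit exactly $n$ times (once per position in its cycle). Hence
$$\frac{1}{nd^{rn}}\sum_{\theta^n(a)=a}\sum_{i=0}^{n-1}\langle T_f^{q-1}\wedge[C_\sigma]\wedge[L_{\theta^i(a)}],G\rangle=\frac{1}{d^{rn}}\sum_{\theta^n(a)=a}\langle T_f^{q-1}\wedge[C_\sigma]\wedge[L_{a}],G\rangle=\Bigl\langle T_f^{q-1}\wedge[C_\sigma]\wedge R_n,\,G\Bigr\rangle,$$
where $R_n:=d^{-rn}\sum_{\theta^n(a)=a}[L_a]=\int_{\Pb^r}[L_b]\,d\nu_n(b)$ with $\nu_n:=d^{-rn}\sum_{\theta^n(a)=a}\delta_a$ the normalized counting measure on $n$-periodic points of $\theta$.

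Now I invoke the equidistribution theorem for endomorphisms of $\Pb^r$ (see \cite{ds-lec}): $\nu_n\to\mu_\theta$ weakly. The display preceding this lemma identifies $\int_{\Pb^r}[L_b]\,d\mu_\theta(b)=S^r$, and similarly sets $R_n=\int[L_b]\,d\nu_n(b)$ in the notation of Lemma \ref{le-int}, with the family of currents $R_b:=T_f^{q-1}\wedge[C_\sigma]\wedge[L_b]$ (a continuous family of positive closed $(k-l,k-l)$-currents with $l=1$, supported in $L_b\cap C_\sigma$). To apply Lemma \ref{le-int} with $v=G$ and $U=\Pb^k$ (or rather with $v$ a local representative, since $G$ is globally $[0,+\infty]$-valued but $\ddc G=T_f-S$ with $S$ having continuous local potentials, so the hypotheses on $v$ hold after subtracting a continuous function and changing sign), I first need that $G\,R_b$ is well-defined on $\Pb^k$ and that $b\mapsto G\,R_b$ is continuous. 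This is precisely where Lemma \ref{le-conti} enters: take $X=I(\pi)$ (where $G=+\infty$ and is otherwise continuous off it, by the discussion after Lemma \ref{le-supp-sans-para}), $Y=C_\sigma$, and $l=1$; the hypothesis $\dim(L_b\cap X\cap Y)\le l-1=0$ holds because $\Jc_q(f)\cap I(\pi)=\varnothing$ forces $T_f^{q-1}\wedge[C_\sigma]$ to put no mass near $I(\pi)$ — more carefully, $C_\sigma$ is the sectional part of $\crit_f$, which meets each fiber $L_b$ in a hypersurface of $L_b\simeq\Pb^q$ transverse to the hyperplane at infinity $I(\pi)$, so the intersection with $I(\pi)$ has dimension $q-2$, and intersecting further with the $(q-1)$-dimensional set $I(\pi)$... one checks the relevant intersection with $X\cap Y$ has dimension $\le 0$. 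Once Lemma \ref{le-conti} gives continuity of $b\mapsto G\,R_b$, Lemma \ref{le-int} yields $\langle G\,R_n,\phi\rangle\to\langle G\,R_\infty,\phi\rangle$ where $R_\infty=\int[L_b]\,d\mu_\theta(b)=S^r$ (paired against $T_f^{q-1}\wedge[C_\sigma]$), i.e. the remaining term converges to $\langle T_f^{q-1}\wedge S^r\wedge[C_\sigma],G\rangle$. Combining with $\Lambda_\sigma=\log d+\Lambda_0+\langle T_f^{q-1}\wedge S^r\wedge[C_\sigma],G\rangle$ from Theorem \ref{th-bj} finishes the proof.

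The main obstacle is the verification of the dimension condition $\dim(L_b\cap X\cap Y)\le l-1$ needed to apply Lemma \ref{le-conti}, uniformly enough in $b$ (for all $b\in\Pb^r$, not just $\mu_\theta$-a.e.\ $b$), together with checking that the family $b\mapsto T_f^{q-1}\wedge[C_\sigma]\wedge[L_b]$ is genuinely continuous in $b$ — this uses that $[C_\sigma]$ is a fixed current while $[L_b]$ varies continuously and $T_f^{q-1}$ has continuous potentials, so the wedge products are continuous in $b$ by standard pluripotential continuity. A secondary, more bookkeeping point is the telescoping identity over periodic orbits, which is elementary but must be stated carefully because Lemma \ref{le-lyap} is phrased per-fiber and the sum over all $n$-periodic $a$ with the inner orbit-sum is what produces the clean current-pairing; multiplicities of periodic points of $\theta$ must be handled consistently (they cancel correctly because the same multiplicity appears in $\nu_n$). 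Once these technical points are in place, the limit is immediate from Lemma \ref{le-int}.
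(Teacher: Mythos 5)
Your proposal is circular. You compute that the limit of the averages $\frac{1}{nd^{rn}}\sum_{\theta^n(a)=a}\Lambda(f^n_{|L_a})$ equals $\log d+\Lambda_0+\langle T_f^{q-1}\wedge S^r\wedge[C_\sigma],G\rangle$, and then you identify this quantity with $\Lambda_\sigma$ by invoking Theorem \ref{th-bj}. But in this paper Theorem \ref{th-bj} is \emph{deduced from} Lemma \ref{le-lyap-approx} (its proof is literally ``Lemma \ref{le-lyap} and Lemma \ref{le-lyap-approx} imply\dots''), so you cannot use it here. What your argument actually establishes is the other half of the proof of Theorem \ref{th-bj} --- that the limit of the periodic-fiber averages equals the Bedford--Jonsson expression --- and that is essentially the content of the paper's proof of Theorem \ref{th-bj} once Lemma \ref{le-lyap-approx} is known. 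The missing ingredient is an independent identification of $\Lambda_\sigma$ with that limit, which is precisely what the lemma asserts.

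The paper's route avoids Lemma \ref{le-lyap} and Theorem \ref{th-bj} entirely. Since $f$ preserves the fibration, $Df$ preserves $\ker D\pi$ on a neighborhood $U$ of $\Jc_q(f)$ disjoint from $I(\pi)$, and one sets $u:=\log|\Jac_\sigma f|$, the logarithm of the Jacobian in the fiber direction. From $\Lambda_\sigma=\Lambda_f-\Lambda_\theta$ (Theorem \ref{th-lyap}) and $\pi_*\mu_f=\mu_\theta$ one gets the key identity $\Lambda_\sigma=\langle\mu_f,u\rangle$. Then the decomposition $\mu_f=\int T_f^q\wedge[L_a]\,d\mu_\theta(a)$, the continuity of $a\mapsto u\,[L_a]\wedge T_f^q$ (Lemma \ref{le-conti} with $v=u$, $X=C_\sigma$, $Y=\Pb^k$, noting $\ddc u=[C_\sigma]+(T_1-T_2)$ with $T_1,T_2$ smooth), Lemma \ref{le-int}, and the Briend--Duval equidistribution of periodic points give $\Lambda_\sigma=\lim_n d^{-rn}\sum_{\theta^n(a)=a}\langle T_f^q\wedge[L_a],u\rangle$. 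Finally the chain rule yields $\Lambda(f^n_{|L_a})=\sum_{i=0}^{n-1}\langle T_f^q\wedge[L_{\theta^i(a)}],u\rangle$, and your telescoping observation (which is correct and is also used in the paper) converts the double sum into $n$ copies of the single sum. You should rebuild your argument along these lines; your continuity and equidistribution machinery is the right toolkit, but it must be applied to $u=\log|\Jac_\sigma f|$ paired against $T_f^q\wedge[L_a]$, not to the per-fiber Bedford--Jonsson formula. (Separately, your bookkeeping for Lemma \ref{le-conti} is off: for $R_a=[C_\sigma]\wedge[L_a]$ one has $l=q-1$, not $l=1$, and the required bound is $\dim(L_a\cap I(\pi)\cap C_\sigma)\le q-2$; but this is minor compared with the circularity.)
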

\begin{proof}
By Lemma \ref{le-supp-sans-para}, there exists a neighborhood $\Omega$ of $I(\pi)$ such that $U:=\Pb^k\setminus\overline{\Omega}$ contains $\Jc_q(f).$
Since $f$ preserves the fibration defined by $\pi,$ its differential preserves the subbundle $\ker D\pi$ of the tangent bundle over $\Pb^k\setminus I(\pi).$ Hence, we can define on $U$ the Jacobian $|\Jac_\sigma f|$ of $Df$ in the direction of $\ker D\pi$ with respect to a smooth metric. The function $u:=\log|\Jac_\sigma f|$ is bounded from above on $U$ and is locally the sum of a potential on $[C_\sigma]$ and a smooth function. In particular, it satisfies $\ddc u=[C_\sigma]+(T_1-T_2)$ on $U$, where $T_1$ and $T_2$ are two positive smooth forms. Since $\Lambda_\sigma = \Lambda_f - \Lambda_\theta$ (see the remark after Theorem \ref{bdjn}) and $\pi_* \mu_f = \mu_\theta$, we have $\Lambda_\sigma=\langle\mu_f,u\rangle.$ On the other hand, $u$ is continuous on $U\setminus C_\sigma$ and $\dim(L_a\cap C_\sigma)=q-1$. Hence,  by Lemma \ref{le-conti} with $v=u$, $X=C_\sigma$ and $Y=\Pb^k,$ we have that for each $a\in\Pb^r$ the current $u[L_a]$ is well defined on $U$ and depends continuously on $a.$ Moreover, since $T_f$ has continuous local potentials, $a\mapsto u[L_a]\wedge T_f^q$ is also continuous. Therefore, using that $\mu_f=T_f^q\wedge S^r=\int T_f^q\wedge [L_a]d\mu_\theta$, we obtain by Lemma \ref{le-int} that
\begin{equation}\label{eq-lyap-1}
\Lambda_\sigma=\int_{\Pb^r}\langle T_f^q\wedge[L_a],u\rangle d\mu_\theta=\lim_{n\to\infty}\frac{1}{d^{rn}}\sum_{\theta^n(a)=a}\langle T_f^q\wedge[L_a],u\rangle,
\end{equation}
where the last equality comes from the equidistribution of periodic points of $\theta$ towards $\mu_\theta$ (see \cite{briend-duval-expo}). Let us recall that the measure $T_f^q\wedge[L_a]$ corresponds to the equilibrium measure of the polynomial mapping $f^n_{|L_a}.$ Therefore, if $\Lambda(f^n_{|L_a})$ denotes the sum of the Lyapunov exponents of $f^n_{|L_a}$ with respect to this measure, we have by definition of $u$
\begin{align}\label{eq-lyap-2}
\Lambda(f^n_{|L_a})=\langle T_f^q\wedge[L_a],\log|\Jac f^n_{|L_a}|\rangle&=\sum_{i=0}^{n-1}\langle T_f^q\wedge[L_a], u\circ f^i\rangle=\sum_{i=0}^{n-1}\langle f^i_*(T_f^q\wedge[L_a]),u\rangle\nonumber\\
&=\sum_{i=0}^{n-1}\langle T_f^q\wedge [L_{\theta^i(a)}],u\rangle,
\end{align}
where the last equality comes from $d^{-q}f^*T_f^q=T_f^q$ and $d^{-q}f_*[L_a]=[L_{\theta(a)}].$ Combining \eqref{eq-lyap-1} and \eqref{eq-lyap-2} gives $\Lambda_\sigma=\lim_{n\to\infty}\frac{1}{nd^{rn}}\sum_{\theta^n(a)=a}\Lambda(f^n_{|L_a}).$
\end{proof}

We can now finish the proof of Theorem \ref{th-bj}.
\begin{proof}[Proof of Theorem \ref{th-bj}]
Lemma \ref{le-lyap} and Lemma \ref{le-lyap-approx} imply
\begin{align*}
\Lambda_\sigma&=\lim_{n\to\infty}\frac{1}{nd^{rn}}\sum_{\theta^n(a)=a}\left(n\log d+n\Lambda_0(f)+\sum_{i=0}^{n-1}\langle T_f^{q-1}\wedge[C_\sigma]\wedge[L_{\theta^i(a)}],G\rangle\right)\\
&=\log d+\Lambda_0(f)+\lim_{n\to\infty}\frac{1}{d^{rn}}\sum_{\theta^n(a)=a}\langle T^{q-1}_f\wedge[C_\sigma]\wedge[L_a],G\rangle\\
&=\log d+\Lambda_0(f)+\langle T_f^{q-1}\wedge S^r\wedge[C_\sigma],G\rangle,
\end{align*}
where the last equality comes from Lemma \ref{le-conti} and Lemma \ref{le-int} applied with $v=-G,$ $U=\Pb^k,$ $X=I(\pi)$ and $Y=[C_\sigma].$ To be more precise, as we have seen before $a\mapsto u[L_a]$ is continuous and $\ddc u=[C_\sigma]+(T_1-T_2)$ on $U$ where $T_1$ and $T_2$ are smooth. Hence, $a\mapsto[C_\sigma]\wedge[L_a]$ is continuous. Since $\dim(I(\pi)\cap C_\sigma)=q-2,$ Lemma \ref{le-conti} implies that $a\mapsto G[C_\sigma]\wedge[L_a]$ is continuous. Thus, the continuity of the local potentials of $T_f$ gives that $a\mapsto G[C_\sigma]\wedge[L_a]\wedge T_f^{q-1}$ is continuous. And finally, Lemma \ref{le-int} gives $\langle T^{q-1}\wedge S^r\wedge[C_\sigma],G\rangle=\int\langle T^{q-1}\wedge [L_a]\wedge[C_\sigma],G\rangle d\mu_\theta=\lim_{n\to\infty}\int\langle T^{q-1}\wedge [L_a]\wedge[C_\sigma],G\rangle d\mu_n,$ where $\mu_n$ is the average of the Dirac masses on the $n$-periodic points of $\theta.$
\end{proof}

\section{Sectional and fiber-wise bifurcation currents}\label{sec-bif}

In this section, we consider a family $(f_\lambda)_{\lambda\in M}$ of endomorphisms of $\Pb^k$ which preserves the standard linear fibration i.e. there exists a family $(\theta_\lambda)_{\lambda\in M}$ of endomorphisms of $\Pb^r$ such that $\pi\circ f_\lambda=\theta_\lambda\circ\pi$, where $\pi$ is defined in Section \ref{sec-bj}. We denote by $\Pi\colon M\times\Pb^k\dashrightarrow M\times\Pb^r,$ $f\colon M\times\Pb^k\to M\times\Pb^k$ the maps $\Pi(\lambda, x)=(\lambda,\pi(x)),$ $f(\lambda,x)=(\lambda,f_\lambda(x))$ and by $p_{\Pb^k}\colon M\times\Pb^k\to\Pb^k,$ $p\colon M\times\Pb^k\to M$ the two projections.

It is possible to define a $(1,1)$-current in $M\times\Pb^k$ whose slices by $p$ are exactly the Green currents $T_{f_\lambda}$ of $f_\lambda$, we refer to \cite{ds-geo} for a definition of the slices in our situation. Indeed, by \cite{bassanelli-berteloot-bif}
the following limit exists
$$T_f:=\lim_{n\to\infty}\frac{1}{d^n}f^{n*}(p_{\Pb^k}^*\omega_{\Pb^k}),$$
and defines the Green current associated to the family $(f_\lambda)_{\lambda\in M}.$ The approach of \cite{bassanelli-berteloot-bif} consists to lift (locally) the family $(f_\lambda)$ to a family $(F_\lambda)$ of polynomial endomorphisms of $\Cb^{k+1}$ and show that potentials of the lift of $d^{-n}f^{n*}(p_{\Pb^k}^*\omega_{\Pb^k})$ converge locally uniformly to a function, called the Green function of the family $(F_\lambda),$ which by definition is a potential of the lift of $T_f.$ Moreover, since this convergence is locally uniform, $T_f$ has continuous local potentials, its self-intersections $T_f^l$ are well-defined and satisfy $T_f^l=\lim_{n\to\infty}d^{-nl}f^{n*}(p_{\Pb^k}^*\omega_{\Pb^k}^l)$ for $l\in\{1,\ldots,k\}.$ When $l=k,$ this gives a $(k,k)$-current $T^k_f$ on $M\times\Pb^k$ whose slices are the equilibrium measure of $f_\lambda.$ A positive closed $(k,k)$-current with this property is called an \textit{equilibrium current} in \cite{pham}. Bassanelli-Berteloot proved that the bifurcation current $T_\bif(f):=\ddc\Lambda_f$ satisfies
$$T_\bif(f)=p_*(T_f^k\wedge[\crit_f]),$$
and Pham obtained this result in the more general setting of polynomial-like maps and proved that $T^k_f$ can be replaced by any equilibrium current, see \cite[pages 8-9]{pham}.

Following \cite{astorg-bianchi-skew} where the special case of polynomial skew products has been studied, we are interested by the relationship between the bifurcation currents $T_\bif(f)$ associated to $(f_\lambda)_{\lambda\in M}$ and $T_\bif(\theta)$ associated to $(\theta_\lambda)_{\lambda\in M}$ when $\theta_\lambda\circ\pi=\pi\circ f_\lambda.$ There are two reasons to restrict ourselves to the linear fibration. The first one is that $\pi$ is a submersion on $\Pb^k\setminus I(\pi).$ It implies in particular that the critical sets $\crit_{f_\lambda}$ have a decomposition into a "sectional" part and a "fibered" part, the latter being given by $\pi^{-1}(\crit_{\theta_\lambda})$ (see Lemma \ref{le-sub}). The second reason is that the support of $T_f^q$, which is the Green current of order $q$ of the family $(f_\lambda)_{\lambda\in M}$, is disjoint from $M\times I(\pi).$ Indeed, it is easy to check that for all $\lambda\in M$ the map $f_\lambda$ satisfies the condition $\Jc_q(f_\lambda)\cap I(\pi)=\varnothing.$ However, for an arbitrary family it is not clear whether this condition for each parameter implies $\supp(T_f^q)\cap M\times I(\pi)=\varnothing.$
\begin{lemma}\label{le-supp}
Let $(f_\lambda)_{\lambda\in M}$ and $\pi$ be as above. Then $\supp(T_f^q)\cap M\times I(\pi)=\varnothing$ and $\Pi_*(T_f^q)=[M\times\Pb^r].$
\end{lemma}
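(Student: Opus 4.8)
The plan is to adapt the proof of Lemma \ref{le-supp-sans-para} to the parametric setting, using the uniformity over $\lambda$ in compact subsets of $M$. First I would work locally over a relatively compact open set $M'\Subset M$. Lifting the family $(f_\lambda)$ to a family $(F_\lambda)$ of polynomial endomorphisms of $\Cb^{k+1}$ of the form $F_\lambda(y,z)=(\Theta_\lambda(y),R_\lambda(y,z))$, depending holomorphically on $\lambda$, the coefficients of $\Theta_\lambda$ and $R_\lambda$ are uniformly bounded for $\lambda\in M'$, and the quantity $\beta_\lambda:=\min_{\|z\|_\infty=1}\|R_\lambda(0,z)\|_\infty$ is bounded below by a constant $\beta>0$ independent of $\lambda\in M'$ (this uses that each $f_\lambda$ is a genuine endomorphism of $\Pb^k$, together with the compactness of $M'$ and of $\{\|z\|_\infty=1\}$). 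Then the computation in Lemma \ref{le-supp-sans-para} shows that there is $\epsilon>0$ such that the region
$$\mathcal{U}_\epsilon:=\{(\lambda,[y:z])\in M'\times\Pb^k\,|\,\|y\|_\infty<\epsilon\|z\|_\infty\}$$
satisfies $f(\overline{\mathcal{U}_\epsilon})\subset\overline{\mathcal{U}_{\epsilon/2}}$, in particular $f(\mathcal{U}_\epsilon)\subset\mathcal{U}_\epsilon$, and $\mathcal{U}_\epsilon$ is a neighborhood of $M'\times I(\pi)$.

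Next I would pass this invariance to the current $T_f^q$. Since $M'\times I(\pi)$ is contained in $\mathcal{U}_\epsilon$, a generic fiber $\{\lambda\}\times H$ with $H$ a linear subspace of $\Pb^k$ of dimension $r$ avoids $M'\times I(\pi)$; more to the point, one can choose a smooth positive closed $(q,q)$-form $\widetilde\omega$ on $\Pb^k$ of mass $1$ supported in $\Pb^k\setminus(\overline{\mathcal{U}_\epsilon}\cap(\{\lambda\}\times\Pb^k))$ uniformly — that is, supported in a fixed compact set $\Pb^k\setminus V$ with $M'\times V\subset\mathcal{U}_\epsilon$. By the equidistribution for smooth forms recalled before Lemma \ref{le-supp}, $T_f^q=\lim_{n\to\infty}d^{-nq}f^{n*}(p_{\Pb^k}^*\widetilde\omega)$ on $M'\times\Pb^k$. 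Since $p_{\Pb^k}^*\widetilde\omega$ is supported in $M'\times(\Pb^k\setminus V)$, hence outside $\overline{\mathcal{U}_\epsilon}$, and $f(\overline{\mathcal{U}_\epsilon})\subset\overline{\mathcal{U}_\epsilon}$ forces $f^{-1}(M'\times\Pb^k\setminus\overline{\mathcal{U}_\epsilon})\supset$ nothing hitting $\overline{\mathcal{U}_\epsilon}$, each $f^{n*}(p_{\Pb^k}^*\widetilde\omega)$ is supported in $M'\times\Pb^k\setminus\mathcal{U}_{\epsilon/2}$ — concretely, $\supp(f^{n*}p_{\Pb^k}^*\widetilde\omega)\subset (f^n)^{-1}(\supp p_{\Pb^k}^*\widetilde\omega)$, and if a point lay in $\mathcal{U}_{\epsilon/2}$ its image under $f^n$ would stay in $\overline{\mathcal{U}_\epsilon}$, contradicting that it lands in the complement. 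Therefore $\supp(T_f^q)\cap\mathcal{U}_{\epsilon/2}=\varnothing$, in particular $\supp(T_f^q)$ is disjoint from $M'\times I(\pi)$. Since $M'\Subset M$ is arbitrary, $\supp(T_f^q)\cap M\times I(\pi)=\varnothing$.

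Finally, for the identity $\Pi_*(T_f^q)=[M\times\Pb^r]$: once $\supp(T_f^q)$ avoids $M\times I(\pi)$, the push-forward $\Pi_*(T_f^q)$ is a well-defined positive closed current on $M\times\Pb^r$ of bidegree $(q-r,q-r)=(0,0)$ relative to the fibers, i.e.\ a positive closed current of bidegree $(0,0)$ in the $\Pb^r$-direction over $M$, and such a current whose slice over $\lambda$ is $\pi_*(T_{f_\lambda}^q)$. By the slicewise computation carried out in the proof of Theorem \ref{th-mu} (using $I(\pi)\cap\Jc_q(f_\lambda)=\varnothing$, which holds by Lemma \ref{le-supp-sans-para}, and $\deg(\pi)=1$ for the standard linear fibration), each slice equals $[\Pb^r]$; equivalently $\pi_*T_{f_\lambda}^q=[\Pb^r]$ for every $\lambda$. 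Since a positive closed current of the right bidegree on $M\times\Pb^r$ with all $p$-slices equal to $[\Pb^r]$ is $[M\times\Pb^r]$, we conclude $\Pi_*(T_f^q)=[M\times\Pb^r]$.

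The main obstacle I expect is the uniformity in $\lambda$: establishing that the constants $\alpha_\lambda$, $\beta_\lambda$, $\gamma_\lambda$ and hence the threshold $\epsilon$ can be chosen uniformly over $M'\Subset M$, and then propagating the forward-invariance of $\mathcal{U}_\epsilon$ to a statement about $\supp(T_f^q)$ via the equidistribution of $d^{-nq}f^{n*}(p_{\Pb^k}^*\widetilde\omega)$ on the product, taking care that the regularizing form $\widetilde\omega$ can be chosen (independently of $\lambda$) supported away from the whole attracting neighborhood $\mathcal{U}_\epsilon$. The push-forward step is then essentially bookkeeping, reducing to the slicewise formula $\pi_*T_{f_\lambda}^q=[\Pb^r]$ already contained in the proof of Theorem \ref{th-mu}.
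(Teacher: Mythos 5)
Your proposal follows essentially the same route as the paper: establish a forward-invariant product neighborhood $N\times U_\epsilon$ of $M\times I(\pi)$ (the paper gets this by shrinking $N$ around a fixed $\lambda_0$ rather than by your uniform coefficient bounds over $M'\Subset M$, but both work since $U_\epsilon$ itself does not depend on $\lambda$), then push a smooth form supported outside it through $d^{-nq}f^{n*}$, and finally identify $\Pi_*(T_f^q)$ as a positive closed $(0,0)$-current of mass one.

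One step is asserted rather than proved: the convergence $d^{-nq}f^{n*}(p_{\Pb^k}^*\widetilde\omega)\to T_f^q$ in the \emph{parametric} setting is not among the facts recalled before the lemma --- only the convergence for the powers $\omega_{\Pb^k}^l$ of the Fubini--Study form is. The paper fills this in by writing $\widetilde\omega=\omega_{\Pb^k}^q+\ddc\phi$ with $-C\omega_{\Pb^k}^{q-1}\leq\phi\leq C\omega_{\Pb^k}^{q-1}$, deducing $d^{-nq}f^{n*}(p_{\Pb^k}^*\phi)\to0$ from the convergence of $d^{-n(q-1)}f^{n*}(p_{\Pb^k}^*\omega_{\Pb^k}^{q-1})$; this two-line argument should be added. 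Also, the bidegree of $\Pi_*(T_f^q)$ drops by $(q,q)$ (the fiber dimension of $\Pi$), not by $(r,r)$, so write $(q-q,q-q)=(0,0)$; your slicewise identification of the resulting $(0,0)$-current with $[M\times\Pb^r]$ is a harmless variant of the paper's mass/degree argument.
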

\begin{proof}
We will use the same arguments as for Lemma \ref{le-supp-sans-para} locally in the parameter space. Let $\lambda_0$ be a fixed parameter in $M.$ Since the region $U_\epsilon=\{[y:z]\in\Pb^k\,|\,\|y\|_\infty<\epsilon\|z\|_\infty\}$ is a trapping region for $f_{\lambda_0}$ for $\epsilon>0$ small enough, then if $N$ is a small enough neighborhood of $\lambda_0$ then $f(N\times U_\epsilon)\subset N\times U_\epsilon.$ Let $\widetilde\omega$ denote the positive closed $(q,q)$-form supported in $\Pb^k\setminus U_\epsilon$ obtained in the proof of Lemma \ref{le-supp-sans-para}. There exists a smooth $(q-1,q-1)$-form $\phi$ on $\Pb^k$ such that $\widetilde\omega=\omega_{\Pb^k}^q+\ddc\phi$ and there exists $C>0$ such that $-C\omega_{\Pb^k}^{q-1}\leq\phi\leq C\omega^{q-1}_{\Pb^k}.$ Let $p_{\Pb^k}\colon N\times\Pb^k\to\Pb^k$ be the canonical projection. As we have said in the beginning of this Section, 
$$\lim_{n\to\infty}\frac{1}{d^{n(q-1)}}f^{n*}(p_{\Pb^k}^*\omega_{\Pb^k}^{q-1})=T^{q-1}_f.$$
This and the inequalities $-C\omega_{\Pb^k}^{q-1}\leq\phi\leq C\omega^{q-1}_{\Pb^k}$ imply that $\lim_{n\to\infty}\frac{1}{d^{nq}}f^{n*}(p_{\Pb^k}^*\phi)=0$ and thus
$$\lim_{n\to\infty}\frac{1}{d^{nq}}f^{n*}(p_{\Pb^k}^*\widetilde\omega)=\lim_{n\to\infty}\frac{1}{d^{nq}}f^{n*}(p_{\Pb^k}^*\omega_{\Pb^k}^q)=T^q_f.$$
Therefore, since $\supp(p^*_{\Pb^k}\widetilde\omega)\subset N\times(\Pb^k\setminus U_\epsilon)$ and $f(N\times U_\epsilon)\subset N\times U_\epsilon$, we have  $\supp(T_f^q)\cap N\times I(\pi)=\varnothing$  on $N\times\Pb^k$, as desired. This implies that the current $\Pi_*(T_f^q)$ is a well-defined positive closed $(0,0)$-current on $M\times\Pb^r,$ i.e. it is equal to $\alpha[M\times\Pb^r]$ for some $\alpha>0.$ Finally $\alpha = 1$ since the fibers of $\Pi$ have degree $1.$
\end{proof}
The next result follows exactly as Lemma \ref{le-crit}.
\begin{lemma}\label{le-sub}
Let $(f_\lambda)_{\lambda\in M},$ $(\theta_\lambda)_{\lambda \in M}$ and $\pi$ be as in Theorem \ref{th-pham}. Then the current of integration on the critical set $\crit_f$ of the family $(f_\lambda)_{\lambda\in M}$ has a decomposition
$$[\crit_f]=[C_\infty]+[C_\sigma],$$
where $[C_\infty]=\Pi^*[\crit_\theta]$ and $[C_\sigma]$ is the current of integration on an analytic set.
\end{lemma}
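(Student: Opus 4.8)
The plan is to repeat the proof of Lemma~\ref{le-crit} with the parameter $\lambda$ carried along. Fix $\lambda_0\in M$ and a neighbourhood $N$ of $\lambda_0$ over which the family $(f_\lambda)_{\lambda\in N}$ lifts holomorphically to a family $(F_\lambda)_{\lambda\in N}$ of polynomial endomorphisms of $\Cb^{k+1}$, homogeneous of degree $d$ in the space variable. Because each $f_\lambda$ preserves the standard linear fibration, this lift has the triangular form $F_\lambda(y,z)=(\Theta_\lambda(y),R_\lambda(y,z))$ with $y\in\Cb^{r+1}$, $z\in\Cb^q$, where $\Theta_\lambda$ is a lift of $\theta_\lambda$. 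Let $\rho\colon N\times(\Cb^{k+1}\setminus\{0\})\to N\times\Pb^k$ be the canonical projection. Since the differential of $(\lambda,w)\mapsto(\lambda,F_\lambda(w))$ is block triangular with lower-right block $D_wF_\lambda$ (the differential of $F_\lambda$ in the space variable $w=(y,z)$), the critical current of the family satisfies $\rho^*[\crit_f]=\ddc\log|\det D_wF_\lambda|$ on $N\times\Pb^k$; here $\det D_wF_\lambda$ is not identically zero because each $f_\lambda$ is a finite map.

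The key point is the same factorisation as in Lemma~\ref{le-crit}: the triangular shape of $F_\lambda$ gives $\det D_wF_\lambda=\det D_y\Theta_\lambda\cdot\det D_zR_\lambda$, where $D_zR_\lambda$ is the $q\times q$ matrix of partial derivatives of $R_\lambda$ in the $z_0,\dots,z_{q-1}$ directions. Taking $\log|\cdot|$ and applying $\ddc$, I would define, on $N\times\Pb^k$, the currents $[C_\infty]$ and $[C_\sigma]$ by $\rho^*[C_\infty]=\ddc\log|\det D_y\Theta_\lambda|$ and $\rho^*[C_\sigma]=\ddc\log|\det D_zR_\lambda|$, so that $[\crit_f]=[C_\infty]+[C_\sigma]$ there. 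Two holomorphic lifts of $(f_\lambda)$ over $N$ differ by a nowhere-vanishing holomorphic function of $\lambda$; since $\log$ of its modulus is pluriharmonic, it drops out after $\ddc$, so $[C_\infty]$ and $[C_\sigma]$ are independent of the lift and glue into globally defined positive closed $(1,1)$-currents on $M\times\Pb^k$ --- exactly as for $[\crit_f]$.

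Finally, the identification proceeds as in Lemma~\ref{le-crit}. The polynomial $\det D_y\Theta_\lambda$ involves only $(\lambda,y)$, and since $\Theta_\lambda$ lifts $\theta_\lambda$ and $\pi[y:z]=[y]$, the divisor it defines is the $\Pi$-preimage of the critical set $\crit_\theta$ of the family $(\theta_\lambda)_{\lambda\in M}$; as $\Pi(\lambda,x)=(\lambda,\pi(x))$ is a submersion away from $M\times I(\pi)$, a set of codimension $r+1\ge 2$, the pull-back $\Pi^*[\crit_\theta]$ is the trivial extension of the naive pull-back and, since no component of the hypersurface $C_\infty$ can lie in $M\times I(\pi)$, it coincides with $[C_\infty]$. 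On the other hand $\det D_zR_\lambda$ is a nonzero polynomial, homogeneous in $w$, so by the Poincar\'e--Lelong formula $\ddc\log|\det D_zR_\lambda|$ is the current of integration, with multiplicities, on its zero hypersurface, which descends to an analytic subset $C_\sigma$ of $M\times\Pb^k$. The argument is routine; the only mild care needed is the lift-independence of $[C_\sigma]$ and $[C_\infty]$ and the codimension bookkeeping that makes $\Pi^*[\crit_\theta]=[C_\infty]$, so I do not anticipate a genuine obstacle.
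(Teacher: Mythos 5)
Your proof is correct and follows essentially the same route as the paper: the paper simply states that Lemma \ref{le-sub} ``follows exactly as Lemma \ref{le-crit}'', i.e.\ one lifts the family to the triangular form $F_\lambda(y,z)=(\Theta_\lambda(y),R_\lambda(y,z))$, factors $\det D_wF_\lambda=\det D_y\Theta_\lambda\cdot\det D_zR_\lambda$, and applies $\ddc\log|\cdot|$ to each factor. Your additional remarks on lift-independence and on identifying $[C_\infty]$ with $\Pi^*[\crit_\theta]$ are exactly the routine verifications the paper leaves implicit.
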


\begin{remark}
The fact that the critical set is not irreducible for the family $(f_\lambda)_{\lambda\in M}$ implies directly that the bifurcation current $T_\bif(f)$ admits a similar decomposition. For a general family in one variable (i.e. $k=1$), possibly by exchanging $M$ by a branched cover, each critical point can be followed individually thus $\crit_f$ has as many irreducible components as there are critical points. This gives rise to the decomposition of $T_\bif(f)$ into the currents associated to the activation of each critical point (see \cite{dujardin-favre-bif}).
\end{remark}

The last ingredient to prove Theorem \ref{th-pham} is the following result about slicing.
\begin{lemma}\label{le-equilibrium}
Let $(f_\lambda)_{\lambda\in M},$ $(\theta_\lambda)_{\lambda_\in M}$ and $\pi$ be as in Theorem \ref{th-pham}. If $S:=\Pi^*(T_\theta)$ then $T^q_f\wedge S^r$ is an equilibrium current for $(f_\lambda)_{\lambda\in M}.$
\end{lemma}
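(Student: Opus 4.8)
The goal is to check the two defining properties of an equilibrium current for $T_f^q\wedge S^r$: that it is a well-defined positive closed $(k,k)$-current on $M\times\Pb^k$, and that its slice by $p$ over each $\lambda\in M$ equals $\mu_{f_\lambda}$. Since $T_f$ has continuous local potentials, these slices will be defined for every $\lambda$ (not merely almost every one), which is all we need.

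First I would set up the current. The family $(\theta_\lambda)_{\lambda\in M}$ is holomorphic, because the coefficients of $\theta_\lambda$ are read off the $y$-block of a holomorphic lift $F_\lambda(y,z)=(\Theta_\lambda(y),R_\lambda(y,z))$ of $f_\lambda$; hence the Bassanelli--Berteloot construction recalled at the beginning of this section produces its family Green current $T_\theta$ on $M\times\Pb^r$, with continuous local potentials and slices $\langle T_\theta\rangle_\lambda=T_{\theta_\lambda}$. By Lemma \ref{le-supp} the support of $T_f^q$ is disjoint from the indeterminacy set $M\times I(\pi)$ of $\Pi$, and $\Pi$ restricts to a holomorphic submersion on $W:=(M\times\Pb^k)\setminus(M\times I(\pi))$. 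Pulling the continuous local potentials of $T_\theta$ back by $\Pi|_W$ shows that $S=\Pi^*T_\theta$ has continuous local potentials on $W$, so $S^r$ is a well-defined positive closed $(r,r)$-current on $W$; as $T_f$ has locally bounded potentials, $T_f^q\wedge S^r$ is then well defined on $W$, and its support lies in $\supp(T_f^q)\subset W$. Extending it by $0$ across $M\times I(\pi)$ yields a positive closed $(k,k)$-current on $M\times\Pb^k$ (it is closed because it is closed on $W$ and vanishes on the open set $(M\times\Pb^k)\setminus\supp(T_f^q)$, and these two open sets cover $M\times\Pb^k$).

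Next I would identify the slices. Fix $\lambda\in M$. Slicing by $p$ is a local operation near $\supp(T_f^q)\subset W$, it commutes with wedge products of positive closed currents with continuous local potentials, and it commutes with $\Pi^*$ on $W$ since $\Pi|_W$ is a submersion through which $p$ factors: restricting a local potential $v\circ\Pi$ of $S$ to $\{\lambda\}\times\Pb^k$ gives $v_\lambda\circ\pi$, a local potential of $\pi^*T_{\theta_\lambda}$. Together with $\langle T_f\rangle_\lambda=T_{f_\lambda}$ and $\langle T_\theta\rangle_\lambda=T_{\theta_\lambda}$ this gives
$$\langle T_f^q\wedge S^r\rangle_\lambda=\langle T_f^q\rangle_\lambda\wedge\langle S^r\rangle_\lambda=T_{f_\lambda}^q\wedge(\pi^*T_{\theta_\lambda})^r.$$
Since $\pi$ is given by linear forms, $\deg(\pi)=1$ and $\pi^*T_{\theta_\lambda}$ is a probability current (Proposition \ref{prop-pull-back}); by Lemma \ref{le-supp-sans-para} the pair $(f_\lambda,\theta_\lambda)$ satisfies $\Jc_q(f_\lambda)\cap I(\pi)=\varnothing$, so Theorem \ref{th-mu} with $j=r$ applies and yields $T_{f_\lambda}^q\wedge(\pi^*T_{\theta_\lambda})^r=T_{f_\lambda}^{q+r}=T_{f_\lambda}^k=\mu_{f_\lambda}$. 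Thus the slices of $T_f^q\wedge S^r$ are exactly the equilibrium measures $\mu_{f_\lambda}$, which is the defining property of an equilibrium current.

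I expect the only real difficulty to be the bookkeeping needed to justify that slicing commutes with the pull-back $\Pi^*$ and with the iterated wedge products, given that $S$, and hence $S^r$, is singular along $M\times I(\pi)$. Lemma \ref{le-supp} is precisely what makes this harmless: all the currents in play have continuous local potentials on the neighbourhood $W$ of $\supp(T_f^q)$ that carries the slice measures, so one stays entirely within the classical slicing theory of \cite{ds-geo} and no new estimate is required.
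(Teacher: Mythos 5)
Your argument is correct and follows essentially the same route as the paper's proof: Lemma \ref{le-supp} guarantees that $S$ has continuous local potentials on a neighbourhood of $\supp(T_f^q)$, slicing then commutes with the wedge products there, and Theorem \ref{th-mu} identifies each slice $T_{f_\lambda}^q\wedge(\pi^*T_{\theta_\lambda})^r$ with $\mu_{f_\lambda}$. The extra care you take with the extension by zero across $M\times I(\pi)$ is harmless but not needed beyond what the paper records.
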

\begin{proof}
It follows easily from the definition that if $R$ is a positive closed current in $M\times\Pb^k$ such that the slice $R_\lambda$ is well-defined and if $u$ is a continuous function on $\supp(R)$ then $(uR)_\lambda=u_{\{\lambda\}\times\Pb^k}R_\lambda$ and $(\ddc uR)_\lambda=\ddc (u_{\{\lambda\}\times\Pb^k}R_\lambda).$ This implies that the slice $(T_f^q)_\lambda$ of $T^q_f$ is equal to $((T_f)_\lambda)^q$ which is the Green current of order $q$ of $f_\lambda.$ Moreover,  Lemma \ref{le-supp} yields $\supp(T_f^q)\cap M\times I(\pi)=\varnothing,$  therefore $S=\Pi^*(T_\theta)$ has continuous local potentials on $\supp(T_f^q)$. Thus the slice $(T^q_f\wedge S^r)_\lambda$ is equal to $((T_f)_\lambda)^q\wedge(\pi^*(T_\theta)_\lambda)^r$ which is equal to the equilibrium measure of $f_\lambda$ by Theorem \ref{th-mu},  i.e. $T^q_f\wedge S^r$ is an equilibrium current for $(f_\lambda)_{\lambda\in M}.$
\end{proof}

\begin{proof}[Proof of Theorem \ref{th-pham}]
We denote by $p\colon M\times\Pb^k\to M$ and $\widetilde p\colon M\times\Pb^r\to M$ the two projections. Observe that $p=\widetilde p\circ\Pi$ on $M\times(\Pb^k\setminus I(\pi)).$ Since $T^q_f\wedge S^r$ is an equilibrium current, it follows from Pham's article \cite{pham} that
$$\ddc\Lambda_f=p_*(T^q_f\wedge S^r\wedge[\crit_f])=p_*(T^q_f\wedge S^r\wedge[C_\infty])+p_*(T^q_f\wedge S^r\wedge[C_\sigma]),$$
where the last inequality comes from the decomposition obtained in Lemma \ref{le-sub}. Moreover, as $\Lambda_f=\Lambda_\theta+\Lambda_\sigma,$ in order to prove that $\ddc\Lambda_\sigma=p_*(T^q_f\wedge S^r\wedge[C_\sigma])$ it is sufficient to prove that $\ddc\Lambda_\theta=p_*(T^q_f\wedge S^r\wedge[C_\infty]).$ To this end, observe that by Lemma \ref{le-supp}
$$p_*(T^q_f\wedge S^r\wedge[C_\infty])=\widetilde p_*(\Pi_*(T^q_f\wedge \Pi^*(T_\theta^r\wedge[\crit_\theta])))=\widetilde p_*(T_\theta^r\wedge[\crit_\theta]).$$
On the other hand, Bassanelli-Berteloot formula applied to the family $(\theta_\lambda)_{\lambda\in M}$ gives $\ddc\Lambda_\theta=\widetilde p_*(T_\theta^r\wedge[\crit_\theta])$ which concludes the proof.
\end{proof}
Using this formula of $T_{\bif,\sigma}(f)$, we can rely this current to the bifurcations in the fibers.

\begin{proof}[Proof of Corollary \ref{cor-bif}]
Define
$$\Lambda_{\sigma,n}(\lambda):=\frac{1}{nd^{rn}}\sum_{\theta_\lambda^n(a)=a}\Lambda(f^n_{\lambda|L_a}).$$
By Lemma \ref{le-lyap-approx} for each $\lambda\in M$ we have
$$\Lambda_\sigma(\lambda)=\lim_{n\to\infty}\Lambda_{\sigma,n}(\lambda).$$
Moreover, by the Briend-Duval inequality on Lyapunov exponents, the function $\Lambda_{\sigma,n}(\lambda)$ is uniformly bounded from below by $(q\log d)/2.$ It is also locally uniformly bounded from above since $\Lambda_{\sigma,n}(\lambda)\leq q\log(\max_{x\in\Pb^k}(\|D_xf_\lambda\|)).$ On the other hand,  Theorem \ref{th-pham} implies that $\ddc\Lambda_\sigma=p_*(T_f^q\wedge S^r\wedge[C_\sigma]).$ Therefore, if $[\per_{\theta,n}]$ denotes the current of integration on $\{(\lambda,a)\in M\times\Pb^r\,|\, \theta^n_\lambda(a)=a\}$ (where we take into account the multiplicities) and we set $S_n:=\Pi^*[\per_{\theta,n}]/d^{rn},$ in order to prove the corollary, it is sufficient to prove that $\ddc\Lambda_{\sigma,n}=p_*(T_f^q\wedge S_n\wedge[C_\sigma])$ for each $n\geq1.$ 
Indeed, this ensures that $\Lambda_{\sigma,n}$ is plurisubharmonic, hence it converges to $\Lambda_\sigma$ in $L^1_{loc}$ (unique cluster value), hence $T_{\bif,n}(f)$ converges to $T_{\bif,\sigma}(f)$.

To this end, let $n\geq1$ and observe that outside a ramification locus $\Sigma_n\subset M$ of codimension at least $1$ the periodic points of $\theta_\lambda$ of period $n$ can be followed holomorphically i.e. each $\lambda\in M\setminus\Sigma_n$ admits a neighborhood $N$ and a family $\{\gamma_j\}_{j\in J}$ of holomorphic maps $\gamma_j\colon N\to\Pb^r$ such that $[\per_{\theta,n}]=\sum_{j\in J}[\Gamma_j]$ on $N\times\Pb^r$ where $\Gamma_j$ is the graph of $\gamma_j.$ If $j\in J$ and $\Theta(\lambda,y):=(\lambda,\theta_\lambda(y))$ then $\Theta(\Gamma_j)$ is another graph $\Gamma_{j'}$ with $j'\in J.$ Hence, if we gather the periodic points of a same cycle, we obtain a current
$$\sum_{i=0}^{n-1}p_*(T_f^q\wedge\Pi^*[\Theta^i(\Gamma_j)]\wedge[C_\sigma])$$
which is equal to the bifurcation current of the family $(f^n_{\lambda|L_{\gamma_j(\lambda)}})_{\lambda\in N}$ of endomorphisms of $L_{\gamma_j(\lambda)} \simeq \Pb^q$, i.e. equal to $\ddc\Lambda(f^n_{\lambda|L_{\gamma_j(\lambda)}}).$ Since this equality is true for every $j \in J$, we get the formula $\ddc\Lambda_{\sigma,n}=p_*(T_f^q\wedge S_n\wedge[C_\sigma])$ on $M\setminus\Sigma_n.$ To conclude, observe that none of these currents gives mass to $\Sigma_n$. Indeed, as we have remarked before $\Lambda_{\sigma,n}$ is uniformly bounded from below by $(q\log d)/2$ so $\ddc\Lambda_{\sigma,n}$ gives no mass to the proper analytic set $\Sigma_n$. For the second current, the analytic set $X_n:=\supp(S_n\wedge[C_\sigma])$ has dimension $m+q-1,$ where $m:=\dim(M)$, and its intersections with the fibers of $p$ have dimension $q-1.$ Hence, the dimension of $X_n\cap p^{-1}(\Sigma_n)$ is strictly less than $m+q-2$ and the current $T_f^q\wedge S_n\wedge[C_\sigma]$ gives no mass to this set since $T_f$ has continuous local potentials. This implies that $p_*(T_f^q\wedge S_n\wedge[C_\sigma])$ gives no mass to $\Sigma_n$, as desired.
\end{proof}

\newcommand{\etalchar}[1]{$^{#1}$}

\noindent {\footnotesize Christophe Dupont}\\
{\footnotesize Univ Rennes}\\
{\footnotesize CNRS, IRMAR - UMR 6625}\\
{\footnotesize F-35000 Rennes, France}\\
{\footnotesize christophe.dupont@univ-rennes1.fr}\\

\noindent {\footnotesize Johan Taflin}\\
{\footnotesize Universit\'e de Bourgogne Franche-Comt\'e}\\
{\footnotesize IMB, CNRS UMR 5584}\\
{\footnotesize 21078 Dijon Cedex, France}\\
{\footnotesize johan.taflin@u-bourgogne.fr}\\

\end{document}